\pgfplotsset{compat=1.10}
\pgfplotsset{
    tick align=outside,
    x grid style={white},
    xmajorgrids,
    y grid style={white},
    ymajorgrids,
    axis line style={white},
    axis background/.style={fill=white!92!black},
    legend style={draw=white, fill=white},
    legend cell align={left}
}
\newtheorem{theorem}{Theorem}
\newtheorem{proposition}{Proposition}[section]
\newtheorem{lemma}[proposition]{Lemma}
\newtheorem{corollary}[proposition]{Corollary}
\theoremstyle{definition}
\newtheorem{definition}[proposition]{Definition}
\newtheorem{remark}[proposition]{Remark}
\numberwithin{equation}{section}
\newcommand{\CC}{\mathbb{C}}
\newcommand{\NN}{\mathbb{N}}
\newcommand{\ZZ}{\mathbb{Z}}
\newcommand\TT {{\mathbb T}}
\newcommand\RR {{\mathbb R}}
\newcommand{\bbS}{\mathbb{S}}
\newcommand{\bbP}{\mathbb{P}}
\newcommand\eps{\varepsilon}
\newcommand\e{{\mathrm{e}}}
\newcommand\dd{{\mathrm{d}}}
\newcommand\ddt{{\frac{\dd}{\dd t}}}
\def\l {\langle}
\def\r {\rangle}
\def\Re{\operatorname{Re}}
\def\Im{\operatorname{Im}}
\newcommand{\Id}{\mathrm{I}} 
\DeclareMathOperator{\tr}{tr} 
\DeclareMathOperator{\ricci}{Ric}
\DeclareMathOperator{\supp}{supp} 
\newcommand{\ind}{{1\!\!1}} 
\newcommand{\conj}[1]{\overline{#1}} 
\newcommand{\tnorm}[1]{{\left\vert\kern-0.25ex\left\vert\kern-0.25ex\left\vert #1
    \right\vert\kern-0.25ex\right\vert\kern-0.25ex\right\vert}}
\newcommand\cL{{\mathcal L}}
\begin{document}

\title{Orientation mixing in active suspensions}
\author[M. Coti Zelati]{Michele Coti Zelati}
\address{Department of Mathematics, Imperial College London, London, SW7 2AZ, UK}
\email{m.coti-zelati@imperial.ac.uk}
\author[H. Dietert]{Helge Dietert}
\address{Universit\'e Paris Cité and Sorbonne Universit\'e, CNRS,
  Institut de Math\'ematiques de Jussieu-Paris Rive Gauche (IMJ-PRG),
  F-75013, Paris, France}
\email{helge.dietert@imj-prg.fr}
\author[D. Gérard-Varet]{David Gérard-Varet}
\address{Universit\'e Paris Cité and Sorbonne Universit\'e, CNRS,
  Institut de Math\'ematiques de Jussieu-Paris Rive Gauche (IMJ-PRG),
  F-75013, Paris, France}
\email{david.gerard-varet@u-paris.fr}

\begin{abstract}
  We study a popular kinetic model introduced by Saintillan and
  Shelley for the dynamics of suspensions of active elongated
  particles where the particles are described by a distribution in
  space and orientation. The uniform distribution of particles is the
  stationary state of incoherence which is known to exhibit a phase
  transition. We perform an extensive study of the linearised
  evolution around the incoherent state. We show (i) in the
  non-diffusive regime corresponding to spectral (neutral) stability
  that the suspensions experience a mixing phenomenon similar to
  Landau damping and we provide optimal pointwise in time decay rates
  in weak topology. Further, we show (ii) in the case of small
  rotational diffusion \(\nu\) that the mixing estimates persist up to
  time scale \(\nu^{-1/2}\) until the exponential decay at enhanced
  dissipation rate \(\nu^{1/2}\) takes over.

  The interesting feature is that the usual velocity variable of
  kinetic models is replaced by an orientation variable on the
  sphere. The associated \emph{orientation mixing} leads to limited
  algebraic decay for macroscopic quantities. For the proof, we start
  with a general pointwise decay result for Volterra equations that
  may be of independent interest. While, in the non-diffusive case,
  explicit formulas on the sphere allow to conclude the desired decay,
  much more work is required in the diffusive case: here we prove
  mixing estimates for the advection-diffusion equation on the sphere
  by combining an optimized hypocoercive approach with the vector field
  method. One main point in this context is to identify good commuting
  vector fields for the advection-diffusion operator on the sphere.
  Our results in this direction may be useful to other models in
  collective dynamics, where an orientation variable is involved.
\end{abstract}

\maketitle

\tableofcontents

\section{Active suspensions}

Over the last decade, a significant mathematical effort has been put into
the understanding of mixing mechanisms in kinetic equations. The
easiest example is free transport
\begin{equation}\label{eq:euclidean-free-transport}
  \partial_t f + v \cdot \nabla_x f = 0, \quad
  f(0,x,v) = f^{in}(x,v), \quad
  x \in \TT^d, v \in \RR^d.
\end{equation}
The explicit representation $f(t,x,v) = f^{in}(x-vt,v)$ shows a
filamentation of the support of the solution through time, leading to
convergence of the solution as time goes to infinity, in weak
topology, despite the absence of diffusive mechanisms. The rate of
convergence depends on the regularity of the data, from exponential
convergence for analytic data, to polynomial convergence for Sobolev
data. When diffusion is added, leading to
$$   \partial_t f + v \cdot \nabla_x f - \nu \Delta_v f = 0, $$
the small scales created by the filamentation allow for an
acceleration of the diffusive process; this leads to an enhanced
dissipation on time scales \(\nu^{-1/3}\) shorter than the usual
$\nu^{-1}$.

Identifying these phenomena in more complex kinetic equations, either
of transport or weakly dissipative type, linearly and nonlinearly, has
attracted a lot of attention. A main example is the analysis of Landau
damping in the Vlasov-Poisson equation of plasma physics, with
pioneering works \cite{landau-1946,mouhot-villani-2011-landau}. More
recent works were dedicated to weakly dissipative cases, trying to
exhibit both Landau damping and enhanced diffusion at the same time
\cite{Bedrossian17,CLN21}.

Another field where similar mixing phenomena are identified is the one
of collective dynamics. A famous example in this direction is the
Kuramoto model, a system of ODEs describing the dynamics of coupled
oscillators, known to exhibit phase transition, from incoherence to
synchronization. At the mathematical level, the mean-field limit of
this system of ODEs leads to a kinetic equation, with unknown
$f(t,\theta,\omega)$ describing the fraction of oscillators with phase
$\theta \in \TT$ and natural frequency $\omega \in \RR$. Depending on
a bifurcation parameter quantifying the intensity of the coupling,
solutions converge weakly either to the uniform distribution
(incoherence), or to the so-called partially locked states modeling
synchronization (and containing Dirac masses in $\theta$). It turns
out that this convergence is again due to a mixing process, as noticed
for the first time in
\cite{strogatz-mirollo-matthews-1992-coupled}. Such phenomenon
is now fully confirmed mathematically, both linearly and nonlinearly
\cite{chiba-2015-proof-kuramoto-conjecture,dietert-2016-stability-kuramoto,fernandez-gerard-varet-giacomin-2016-landau-kuramoto,benedetto-caglioti-montemagno-2016-exponential-dephasing,dietert-fernandez-gerard-varet-2018-landau-kuramoto,dietert-fernandez-2018-kuramoto}.

In this paper, we will study mixing properties of a popular model describing a dilute suspension of elongated active particles in a Stokes flow.

\subsection{The model}
In \cite{saintillan-shelley-2008-instabilities}, see also  \cite{saintillan-2018-rheology} for a review, D. Saintillan and M. Shelley have introduced a system of PDEs to describe
the dynamics of a dilute suspension of elongated active particles in a
viscous Stokes flow. The word ``active'' refers to the fact that they
convert chemical energy into mechanical work. A typical example are
bacteria, which are able to swim and create stress through the use of
their flagellas. To write down the model, the first step is to
consider a collection of $N$ elongated particles (ellipsoids) immersed
in a Stokes flow. They are described by the position of their center
of mass $x_i \in \TT_L^3 = (\RR/(L \ZZ))^3$ and their director
$p_i \in \bbS^2$, $1 \le i \le N$. The choice of a torus of size $L$
for the spacial domain instead of a real container is for mathematical
convenience: it introduces a typical length scale $L$ without problems
related to the boundary of the domain.  Neglecting interaction between
the particles, one can use the work of Jeffery about a single passive
ellipsoidal particle in a Stokes flow, see
\cite{jeffery-1922-ellipsoidal,taylor-1923}. If the typical size of
the particle is very small compared to $L$, the torque experienced by
the particle $i$ is approximated by
$\mathbb{P}_{p_i^\perp} \left( \gamma E(u) + W(u) \right)\vert_{x_i}
p_i$, where $\mathbb{P}_{p_i^\perp} = {\rm I} - p_i \otimes p_i$ is
the projection orthogonal to $p_i$, while $E(u)$ and $W(u)$ are the
symmetric and skew-symmetric parts of $\nabla u$. The parameter
$\gamma$ is related to the aspect ratio of the ellipsoid, with $\gamma = 1$ in the limit case of a rod.
Note that in the work of Jeffery, $u$ refers to the unperturbed
velocity field, that is the one forgetting the particles. It is thus
defined everywhere on $\TT_L^3$, especially in $x_i$.  The dynamics is
then given by
$$  \dot{x}_i = u(x_i) + U_0 p_i, \quad  \dot{p}_i = \mathbb{P}_{p_i^\perp} \big( \gamma E(u) + W(u)\big)\vert_{x_i} p_i + \text{possible Brownian noise}. $$
The first equation describes the velocity of the particles as a sum of
two contributions: the first one corresponds to advection by the
velocity field $u$ of the flow. The second one corresponds to swimming
along the director $p_i$, at constant speed $U_0 > 0$. The second
equation describes the rotation of the particle, which is known in the
absence of noise to experience periodic trajectories (Jeffery's
orbits).  As far as we understand, these orbits are not really
observed in experiments, and it is more accurate to perturb them by
adding additional rotational Brownian motion, of small amplitude
$\nu \ll 1$. On the other hand, we neglect Brownian motion in $x$,
that is translational diffusion. As pointed out in
\cite[Page~7]{saintillan-shelley-2008-instabilities}, it is not
expected to play a big role in the linear stability properties of the
model. We stress though that it could be included in the analysis
below, and all the results would then hold independently of the
strength of this diffusion.

Besides the swimming velocity $U_0 p_i$, another feature of active
suspensions that needs to be retained is the stress that they create
on the fluid. Saintillan and Shelley depart here from the work of
Jeffery by including this effect at the level of the Stokes equation
on $u$. The extra stress due to particle $i$ can be thought as a
dipole: the sum of two opposite and close point forces, along the
direction of $p_i$: typically, for bacteria, one point force is
centered at the body of the particles, while the other is centered at
the flagella, see \cref{fig:pusher-puller} distinguishing between two
kinds of bacteria: pushers and pullers. One ends up with
$$ -\Delta u + \nabla q = \alpha_N \sum_{i=1}^N \nabla_x \cdot  \left(  ( p_i \otimes p_i ) \delta_{x_i} \right), \quad \nabla_x \cdot   u= 0,$$
where this appearance of the divergence operator in front of a Dirac
mass is typical of a dipole. Depending on the orientation of the
dipole, the sign of $\alpha_N$ can be positive (pullers) or negative
(pushers). We refer to \cite{ChenLiu2013} for a more rigorous
derivation.

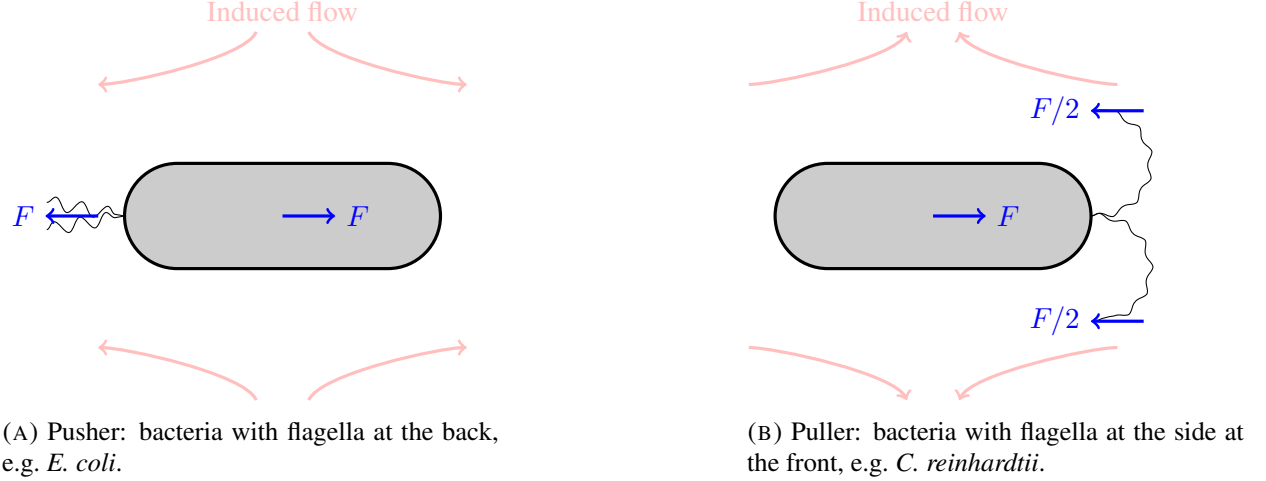
\begin{figure}[tb]
  \centering
  \begin{subfigure}{0.4\textwidth}
    \begin{tikzpicture}[scale=0.7]
      \draw[very thick, black,fill=black!20!white]
      (-2,1) -- (2,1) arc (90:-90:1)
      -- (2,-1) -- (-2,-1) arc (-90:-270:1);
      \draw[decorate,decoration=snake] (-3,0) + (170:1.5) -- (-3,0);
      \draw[decorate,decoration=snake] (-3,0) + (190:1.5) -- (-3,0);
      \draw[very thick,blue,->] (0,0) -- (1,0) node[anchor=west] {\(F\)};
      \draw[very thick,blue,->] (-3.5,0) -- (-4.5,0) node[anchor=east] {\(F\)};
      \draw[very thick,pink,->] (0.5,3.5) .. controls (0.7,3) and (3,2.5) .. (3.5,2.5);
      \draw[very thick,pink,->] (0.5,-3.5) .. controls (0.7,-3) and (3,-2.5) .. (3.5,-2.5);
      \draw[very thick,pink,->] (-0.5,3.5) .. controls (-0.7,3) and (-3,2.5) .. (-3.5,2.5);
      \draw[very thick,pink,->] (-0.5,-3.5) .. controls (-0.7,-3) and (-3,-2.5) .. (-3.5,-2.5);
      \draw[pink] (0,3.5) node[anchor=south] {Induced flow};
    \end{tikzpicture}
    \caption{Pusher: bacteria with flagella at the back, e.g.\
      \textit{E. coli}.}
  \end{subfigure}
  \hfill
  \begin{subfigure}{0.4\textwidth}
    \begin{tikzpicture}[scale=0.7]
      \draw[very thick, black,fill=black!20!white]
      (-2,1) -- (2,1) arc (90:-90:1)
      -- (2,-1) -- (-2,-1) arc (-90:-270:1);
      \draw[decorate,decoration={snake,amplitude=1}] (3,0) arc (-90:90:1);
      \draw[decorate,decoration={snake,amplitude=1}] (3,0) arc (90:-90:1);
      \draw[very thick,blue,->] (0,0) -- (1,0) node[anchor=west] {\(F\)};
      \draw[very thick,blue,->] (4,2) -- (3,2) node[anchor=east] {\(F/2\)};
      \draw[very thick,blue,->] (4,-2) -- (3,-2) node[anchor=east] {\(F/2\)};
      \draw[very thick,pink,<-] (0.5,3.5) .. controls (0.7,3) and (3,2.5) .. (3.5,2.5);
      \draw[very thick,pink,<-] (0.5,-3.5) .. controls (0.7,-3) and (3,-2.5) .. (3.5,-2.5);
      \draw[very thick,pink,<-] (-0.5,3.5) .. controls (-0.7,3) and (-3,2.5) .. (-3.5,2.5);
      \draw[very thick,pink,<-] (-0.5,-3.5) .. controls (-0.7,-3) and (-3,-2.5) .. (-3.5,-2.5);
      \draw[pink] (0,3.5) node[anchor=south] {Induced flow};
    \end{tikzpicture}
    \caption{Puller: bacteria with flagella at the side at the front, e.g.\
      \textit{C. reinhardtii}.}
  \end{subfigure}
  \caption{Illustration of pusher and puller active particles moving
    to the right. Depending on configuration, the force for propulsion
    is differently applied on the fluid, which yields a different
    induced flow around the active particle \cite{saintillan-2018-rheology}.}
  \label{fig:pusher-puller}
\end{figure}

Eventually, with the right scaling of $\alpha_N$, and performing a (formal)
mean-field limit, one obtains the Saintillan-Shelley model, with two
unknowns:
\begin{itemize}
\item $\Psi = \Psi(t,x,p)$, the distribution of particles in space and
  orientation,
\item  $u = u(t,x)$, the velocity field of the fluid.
\end{itemize}
It reads
\begin{equation} \label{SS}
  \begin{aligned}
    &\partial_t \Psi
      +(U_0p +u)\cdot \nabla_x\Psi
      + \nabla_p\cdot \Big(\mathbb{P}_{p^\perp}
      \left[(\gamma E(u) + W(u))p\right] \Psi\Big)
      = \nu \Delta_p \Psi,\\
    &-\Delta_xu +\nabla_x q= \alpha \nabla_x \cdot \int_{\bbS^2} \Psi(t,x,p)\, p\otimes p \, \dd p,\\
    &\nabla_x \cdot u=0,
  \end{aligned}
\end{equation}
where we remind that
\begin{align}
  E(u)=\frac12 \left[\nabla u + (\nabla u)^T\right] , \qquad
  W(u)=\frac12 \left[\nabla u - (\nabla u)^T\right].
\end{align}
The first equation on $\Psi$ reflects transport along the stream lines
associated to
\begin{equation*}
  \dot{x} = u(x) + U_0 p, \quad
  \dot{p} = \mathbb{P}_{p^\perp} \left[(\gamma E(u) + W(u))(x)p\right]
\end{equation*}
inspired by the dynamics of the particles mentioned above. It also
contains some diffusion term $\nu \Delta_p \Psi$ ($\nu \ll 1$), where
$\Delta_p$ refers to the Laplace-Beltrami operator associated to
possible Brownian rotational noise. The other equations describe the
Stokes flow, which is forced by the extra stress
$\operatorname{div} \sigma$, with $\sigma$ given by the marginal in
$p$ of $\alpha \Psi p \otimes p$.

Let us notice that this model is very close to the Doi model
\cite{DoiEdwards88} which describes passive suspensions of rodlike
polymers. In this setting, $U_0 = 0$ (passive particles), and
$\alpha > 0$ is proportional to the Boltzmann constant and the
temperature: the tensor $\sigma$ models viscoelastic stress.  While
local existence and uniqueness of smooth solutions of both the Doi and
the Saintillan-Shelley models can be obtained by standard methods, the
question of global in time well-posedness is harder. In the case of
the Doi model, global well-posedness was proved in
\cite{OttoTzavaras2008} for $\nu > 0$, and in \cite{Constantin2005}
for $\nu \ge 0$. Actually a look at the proof of \cite{Constantin2005}
shows that it is still valid for $\alpha$ of any sign. However, for
non-zero $U_0$, as far as we know, global well-posedness is unknown
(except for the addition of diffusion in $x$, see
\cite{ChenLiu2013}). It seems to be an interesting open problem,
especially when $\nu = 0$: indeed, one can check that the force field
in the equation for
$\mathbb{P}_{p^\perp} \left[(\gamma E(u) + W(u))p\right]$ has the same
regularity in $x$ as $\sigma$. This is to be compared with
Vlasov-Poisson equation, where the force field has one degree of
regularity more than the density $\rho$. In the present paper, we will
not contribute to this well-posedness theory, but will rather
investigate qualitative properties at the linear level.

\subsection{Asymptotic stability of incoherence: main results}
A class of equilibria of interest is given by
\begin{align}
  \Psi^S=\Psi^S(p), \qquad u^S= 0
\end{align}
of which the isotropic suspension $\Psi^{iso}=1/4\pi$ is a particular
case. By analogy with the Kuramoto model, we will call \(\Psi^{iso}\)
the incoherent state, as it reflects zero alignment between the
orientation of the particles. The linearized system around
$(\Psi^{iso}, u^S = 0)$ reads
\begin{align}
  &\partial_t \psi +U_0 p\cdot \nabla_x\psi  + \frac{1}{4\pi} \nabla_p\cdot
    \Big(\mathbb{P}_{p^\perp} \left[(\gamma E(u) + W(u))p \right] \Big)
    =  \nu \Delta_p \psi,\\
  &-\Delta_x u +\nabla_x q= \nabla_x \cdot \alpha \int_{\bbS^2} \psi(t,x,p)\, p\otimes p \,  \dd p,\\
  &\nabla_x \cdot u=0.
\end{align}
Since $E(u)$ is symmetric, $W(u)$ is skew-symmetric and $u$ is divergence-free, we have
\begin{align}
  &\nabla_p\cdot \left((\Id-p\otimes p)   E(u) p  \right)
    =-3p\otimes p:E(u),\qquad \nabla_p\cdot \left((\Id-p\otimes p)   W(u)p  \right)=0.
\end{align}
Hence the equations become
\begin{equation} \label{LSS}
  \begin{aligned}
    &\partial_t \psi +U_0p \cdot \nabla_x\psi -\frac{3\gamma}{4\pi}p\otimes p:E(u)= \nu \Delta_p  \psi,\\
    &-\Delta_x u +\nabla_x q= \nabla_x \cdot \alpha \int_{\bbS^2} \psi(t,x,p)\, p\otimes p \, \dd p,\\
    &\nabla_x \cdot u=0.
  \end{aligned}
\end{equation}
A partial analysis of system \eqref{LSS} is performed in \cite{saintillan-shelley-2008-instabilities,HoheneggerShelley2010}

\begin{itemize}
\item in the case $\nu = 0$, looking for unstable eigenmodes, the
  authors manage to calculate an explicit dispersion relation. They
  show that in the case of suspensions of pullers ($\alpha > 0$), no
  unstable eigenvalue exists. As pointed out by the authors, this is
  consistent with the fact that for the full nonlinear system
  \eqref{LSS}, for any $\alpha > 0$, $\nu \ge 0$, the relative entropy
  of any solution $\psi$ with respect to $\psi^{iso}$ decays. However,
  the situation changes completely for pushers ($\alpha < 0$). In this
  case, unstable eigenvalues exist at low enough $x$ frequencies. In
  other words, the length $L$ of the torus is a bifurcation parameter:
  there is a critical value $L_c$, computed numerically in
  \cite{saintillan-shelley-2008-instabilities}, such that for
  $L < L_c$, there exists no unstable eigenvalue, while for $L > L_c$,
  there exists at least one. Above this threshold, the incoherent
  state loses its stability, some partial alignment of the ellipsoidal
  particles is observed numerically, while the corresponding velocity
  field develops patterns that are favourable to mixing of passive
  scalars advected by the flow. See the nice recent work
  \cite{Ohm_Shelley_bifurcation} for more on the nature of the
  bifurcation process.

\item in the case of small $\nu >0$, no explicit dispersion relation
  is available. But numerical computations in
  \cite{saintillan-shelley-2008-instabilities}, confirm the picture
  given at $\nu = 0$, with some threshold close to $L_c$. This
  numerical observation will be confirmed rigorously here.
\end{itemize}

Our focus in the present paper is on what we call the incoherent
regime, that is $L < L_c$, both in the case $\nu=0$ and $\nu >
0$. Again, some theoretical observations are already contained in
\cite{saintillan-shelley-2008-instabilities}, see also \cite{HoheneggerShelley2010}. For $\nu =0$, the absence of unstable eigenvalue for the
linearized operator in \eqref{LSS} does not imply automatically linear
stability, due to the possible unstable continuous spectrum.
Numerical simulations in \cite{HoheneggerShelley2010} show high frequency oscillations but
no instability. Moreover, some decay at rate $(kt)^{-2}$, where $k$ is
the $x$-frequency of the perturbation, is seen for some moment of the
solution $\psi$ with respect to $p$. Stability of the incoherent state
is conjectured on the basis of these simulations. Regarding $\nu > 0$, stability is confirmed by simulations, but no clear rate of convergence with
respect to $\nu$ is given.

Our aim is to clarify most of these aspects, through a detailed
mathematical study of the linearized equation \eqref{LSS}. Since
$x \in \TT^3_L$, we take the Fourier transform of $\psi$
\begin{align}
  &\psi_{k}(t,p)
    = \int_{ \TT^3_L} \e^{- i  k \cdot x}\psi(t,x,p)\, \dd x,
    \qquad k \in \frac{2\pi}{L}  \ZZ^3,\\
  &\psi(t,x,p)=\frac{1}{L^3}\sum_{k\in \frac{2\pi}{L}\ZZ^3}  \psi_{k}(t,p)\, \e^{i k\cdot x},
\end{align}
and similarly for $u_k$, the Fourier transform of $u$. The Fourier transform of the equation is then
\begin{equation*}
  \begin{aligned}
    &\partial_t \psi_k+U_0 i k\cdot p \,\psi_k=\frac{3\gamma}{4\pi}p\otimes p:E_k(u_k) + \nu \Delta_p \psi_k,\\
    &u_k := \frac{i}{k^2}  \mathbb{P}_{k^\perp}  \sigma_k k, \quad
      \mathbb{P}_{k^\perp} := \left(\Id- \frac{k}{|k|} \otimes \frac{k}{|k|} \right), \quad E_k(u) := \frac{i}{2} (k \otimes u + u \otimes k),\\
    &\sigma_k := \alpha \int_{\bbS^2} \psi_k(t,p)\, p\otimes p\, \dd p.
  \end{aligned}
\end{equation*}
Note that for $k=0$, the equation reduces to the simple heat equation
\begin{equation*}
  \partial_t \psi_0 - \nu \Delta_p \psi_0  = 0
\end{equation*}
so that we restrict to $k \neq 0$. Moreover, through the change of
variables
\begin{equation}\label{eq:rescaling}
  t := \frac{t}{U_0 |k|},
  \quad u := \frac{u}{|\alpha k|},
  \quad \Gamma := \frac{\gamma |\alpha|}{U_0 |k|},
  \quad \nu := \frac{\nu}{U_0 |k|},
  \quad k := \frac{k}{|k|},
  \quad \eps := \frac{\alpha}{|\alpha|},
\end{equation}
the system becomes
\begin{equation} \label{LSS-mode}
  \begin{aligned}
    &\partial_t \psi_k +  i k\cdot p \,\psi_k= \frac{3 \Gamma}{4\pi} p\otimes p:E(u_k) + \nu \Delta_p \psi_k,\\
    &u_k = i  \mathbb{P}_{k^\perp}  \sigma_k k, \quad \mathbb{P}_{k^\perp} = \left(I- k \otimes k \right),\\
    &\sigma_k = \eps \int_{\bbS^2} \psi_k(t,p)\, p\otimes p\, \dd p,
  \end{aligned}
\end{equation}
where
\begin{equation}
  k \in \bbS^2, \quad \eps = \pm 1 \quad \text{ with $\eps = 1$ for pullers, $\eps = -1$ for pushers.}
\end{equation}

In this rescaled setting, our main results are the following:
\begin{theorem}[\textbf{Inviscid case, pointwise decay through mixing}] \label{thm1}

  Let $\nu = 0$,
  $\psi_k^{in} = \psi_k^{in}(p) \in H^{3+\delta}(\bbS^2)$ for some
  $\delta > 0$, \(k \in \bbS^2\), \(\eps=\pm 1\) and
  \(\Gamma \in \RR_+\).  Let $\psi_k = \psi_k(t,p)$ be the solution of
  \eqref{LSS-mode} such that $\psi_k\vert_{t=0} = \psi_k^{in}$. There
  is an absolute constant $\Gamma_c \in (0,+\infty]$, with
  $\Gamma_c = +\infty$ for $\eps = 1$, and
  $C = C\big(\delta,\Gamma\big)$, such that if $\Gamma < \Gamma_c$,
  then for all $t \ge 0$
  \begin{align}
    \label{eq:inviscid-u-decay}
    |u_k(t)| &\le  \frac{C}{(1+t)^2}   \| \psi^{in}_{k} \|_{H^{3+\delta}(\bbS^2)},\\
    \label{eq:inviscid-psi-decay}
    \| \psi_k(t, \cdot)  \|_{H^{-1-\delta}(\bbS^2)}
             &\le  \frac{C}{(1+t)}  \| \psi^{in}_{k} \|_{H^{3+\delta}(\bbS^2)}.
  \end{align}
\end{theorem}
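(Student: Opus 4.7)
The plan is to reduce the problem to a linear Volterra integral equation for $u_k$ and then apply the general pointwise Volterra decay result announced in the introduction, once one has established $(1+t)^{-2}$ decay of the source and kernel via oscillatory integrals on $\bbS^2$.

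\textbf{Step 1: Volterra reduction.} Since $E(u_k) = \tfrac{i}{2}(k\otimes u_k + u_k\otimes k)$ and $u_k \in k^\perp$, the source in \eqref{LSS-mode} simplifies to $\tfrac{3\Gamma i}{4\pi}(k\cdot p)(p\cdot u_k)$, and for $\nu = 0$ Duhamel's formula gives
\begin{equation*}
  \psi_k(t,p) = e^{-i(k\cdot p)t}\psi_k^{in}(p) + \tfrac{3\Gamma i}{4\pi}(k\cdot p)\int_0^t e^{-i(k\cdot p)(t-s)}\bigl(p\cdot u_k(s)\bigr)\,\dd s.
\end{equation*}
Substituting into the definition $u_k = i\eps\,\bbP_{k^\perp}\int_{\bbS^2}(k\cdot p)\,p\,\psi_k\,\dd p$ and swapping integrals yields a convolution Volterra equation $u_k = F + K \ast u_k$, with $F$ linear in $\psi_k^{in}$ and with $K$ acting on $k^\perp$ as a scalar multiple of the identity.

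\textbf{Step 2: Decay of $F$ and $K$ via oscillatory integrals.} Using coordinates aligned with $k$ so that $\mu = k\cdot p \in [-1,1]$ and $\dd p = \dd\mu\,\dd\phi$, the $\phi$-integration reduces both objects to one-dimensional oscillatory integrals. A direct computation gives
\begin{equation*}
  K(t) = -\tfrac{3\Gamma\eps}{4}\bbP_{k^\perp}\int_{-1}^1 \mu^2(1-\mu^2)\,e^{-i\mu t}\,\dd\mu,
\end{equation*}
whose integrand vanishes at $\mu = \pm 1$, so two integrations by parts give $|K(t)| \lesssim (1+t)^{-2}$. For $F$, the $\phi$-averaged integrand acquires two factors of $\sqrt{1-\mu^2}$---one from $\bbP_{k^\perp}p$ and one from the $m = \pm 1$ spherical-harmonic components of $\psi_k^{in}$, since $P_\ell^1(\mu) = -\sqrt{1-\mu^2}\,P_\ell'(\mu)$---so it again vanishes at $\mu = \pm 1$, and two integrations by parts produce $|F(t)| \lesssim (1+t)^{-2}\,\|\psi_k^{in}\|_{H^{3+\delta}(\bbS^2)}$, Sobolev embedding on $\bbS^2$ being used to control the resulting $L^1(\dd\mu)$-norm of the second $\mu$-derivatives.

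\textbf{Step 3: Penrose threshold and weak decay of $\psi_k$.} Once the source and kernel decay like $(1+t)^{-2}$, the general Volterra theorem from the introduction gives \eqref{eq:inviscid-u-decay} under the Penrose-type condition $\inf_{\Re\lambda \ge 0}|1 - \hat K(\lambda)| > 0$. An explicit computation of $\hat K(\lambda)$ shows that for pullers ($\eps = +1$) this condition holds for every $\Gamma \ge 0$ (consistent with the entropy dissipation recalled above), giving $\Gamma_c = +\infty$; for pushers ($\eps = -1$) it fails at an explicit finite $\Gamma_c$ determined by a zero of the dispersion relation on $\{\Re\lambda = 0\}$. The weak estimate \eqref{eq:inviscid-psi-decay} then follows by pairing Duhamel's representation against $\varphi \in H^{1+\delta}(\bbS^2)$: the free-transport term contributes the leading $(1+t)^{-1}$ after one integration by parts in $\mu$, while the Duhamel correction inherits the faster $(1+t)^{-2}$ decay of $u_k$ through an $L^1$-convolution bound.

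\textbf{Main obstacle.} The most delicate point is Step~2: two integrations by parts in $\mu$ generate derivatives of the $\phi$-averaged integrand weighted by $(1-\mu^2)^{\pm 1/2}$ coming from the associated Legendre functions, and carefully tracking these weights---rather than na\"ively counting derivatives on the sphere---is what brings the Sobolev threshold down to the stated $H^{3+\delta}(\bbS^2)$.
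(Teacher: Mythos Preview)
Your proposal follows the same architecture as the paper: Volterra reduction, $(1+t)^{-2}$ decay of kernel and source via oscillatory integrals on $\bbS^2$, a Penrose-type analysis of the spectral condition, and weak decay of $\psi_k$ via Duhamel. Step~1 and Step~3 (including the identification of $\Gamma_c$ via the dispersion relation and the time-splitting of the Duhamel integral) match the paper's argument.

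The one genuine technical difference is in Step~2. The paper does not integrate by parts in the singular coordinate $\mu = k\cdot p$; instead it proves a general stationary-phase lemma for $\int_{\bbS^2} \e^{i(e\cdot p)t} F(p)\,\dd p$ by isolating the two critical points $p=\pm e$ with a partition of unity, applying the Morse lemma in smooth local charts near each pole, and then using Plancherel on $\RR^2$. This yields an asymptotic expansion whose leading $O(t^{-1})$ coefficients depend only on $F(\pm e)$; since the integrands for both $U$ and $K$ carry the factor $\bbP_{k^\perp}p$, which vanishes at $p=\pm k$, these coefficients drop out and the $O(t^{-2})$ remainder is controlled by $\|F\|_{H^{3+\delta}}$ directly. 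Your alternative of two integrations by parts in $\mu$ works without difficulty for $K$, whose integrand $\mu^2(1-\mu^2)$ is a polynomial. For the source $U$, however, it requires relating two $\mu$-derivatives of the $\phi$-averaged data, together with the Legendre weights $(1-\mu^2)^{\pm 1/2}$ you mention, back to $\|\psi^{in}\|_{H^{3+\delta}(\bbS^2)}$. You rightly flag this as the delicate point; it can be made to work, but the $(\mu,\phi)$ chart is singular at the poles, and recovering the sharp Sobolev exponent along this route is noticeably more laborious than the paper's chart-based stationary-phase approach, which sidesteps the issue entirely.
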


\begin{theorem}[\textbf{Diffusive case, mixing persists for small $\nu$ up to time $\nu^{-1/2}$}]\label{thm2}

  Let $\nu \in (0,1)$,
  $\psi_k^{in} = \psi_k^{in}(p) \in H^{2+\delta}(\bbS^2)$ for some
  $\delta > 0$, \(k \in \bbS^2\), \(\eps=\pm 1\) and
  \(\Gamma \in [0, \Gamma_c)\), with $\Gamma_c$ as in \cref{thm1}.
  Let $\psi_k = \psi_k(t,p)$ be the solution of \eqref{LSS-mode} such
  that $\psi_k\vert_{t=0} = \psi_k^{in}$.  There exist
  $\nu_0 = \nu_0(\Gamma) > 0$, $C = C(\delta, \Gamma) > 0$, such that
  for $0 < \nu < \nu_{0}$ and \(t \in [0,\nu^{-1/2}]\) it holds that
  \begin{align}
    |u_k(t)| &\le  \frac{C \ln(2+t)}{(1+t)^2}   \| \psi^{in}_{k} \|_{H^{2+\delta}(\bbS^2)},\\
    \| \psi_k(t, \cdot)  \|_{H^{-1-\delta}(\bbS^2)}
             &\le   \frac{C \ln(2+t)}{(1+t)}  \| \psi^{in}_{k} \|_{H^{2+\delta}(\bbS^2)}.
  \end{align}
\end{theorem}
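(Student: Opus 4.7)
The plan is to follow the Volterra-equation strategy used for Theorem~1, with the kernel and source now built from the advection--diffusion semigroup $\cS(t)$ generated by $L_k f := i\, k\cdot p\, f - \nu \Delta_p f$ on $\bbS^2$. Writing Duhamel's formula for $\psi_k$ and integrating the resulting identity against $p\otimes p$, one obtains a closed scalar Volterra identity
\begin{equation*}
  \sigma_k(t) = F(t) + \int_0^t K(t-s)\, \sigma_k(s)\, \dd s,
\end{equation*}
where $F(t) = \eps \int_{\bbS^2} (p\otimes p)\, \cS(t) \psi_k^{in}\, \dd p$ and $K(t)$ is a tensor-valued kernel obtained by applying $\cS(t)$ to a polynomial in $p$ and $k$ coming from the source $\tfrac{3\Gamma}{4\pi} p\otimes p : E(u_k)$ in \eqref{LSS-mode}. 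This is the exact structure treated by the general pointwise Volterra decay result of the paper.

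Two inputs then have to be checked. First, the spectral stability hypothesis, namely that the Laplace transform of $K$ stays bounded away from the relevant critical value on $\{\Re\lambda\ge 0\}$, holds for $0<\nu<\nu_0$ by perturbation from $\nu=0$, using that at $\nu=0$ the condition is exactly $\Gamma<\Gamma_c$. Second, one needs the pointwise in time decay of $F$ and $K$,
\begin{equation*}
  |F(t)| + |K(t)| \,\leq\, \frac{C\ln(2+t)}{(1+t)^{2}}\, \|\psi_k^{in}\|_{H^{2+\delta}(\bbS^2)},\qquad 0\leq t\leq \nu^{-1/2},
\end{equation*}
on moments of $\cS(t)f$ against $p\otimes p$ for sufficiently smooth $f$. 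Once these are established, the Volterra result delivers the stated bounds on $u_k$ and $\psi_k$.

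The central step is therefore to prove mixing estimates for the advection--diffusion semigroup on $\bbS^2$. Following the strategy announced in the introduction, I would look for a family of time-dependent tangent vector fields $V_t$ on $\bbS^2$ whose commutator with $L_k$ is either null or absorbable, so that integration by parts with $V_t$ in $\int (p\otimes p)\, \cS(t)f\, \dd p$ trades temporal growth for higher Sobolev regularity of $f$, as in the inviscid proof of Theorem~1. These commutations are then coupled with a hypocoercive energy controlling $f$, $V_t f$ and $V_t^2 f$ simultaneously, with carefully tuned cross-terms à la Villani. A Gr\"onwall argument on this energy yields the $(1+t)^{-2}$ decay of the moments, at the cost of a logarithmic factor compared to $\nu=0$, for exactly as long as the commutator errors between $V_t$ and $\nu\Delta_p$ remain under control, namely up to $t\sim \nu^{-1/2}$.

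The main obstacle is the geometric one: identifying these \emph{good} vector fields $V_t$ on the sphere. On $\TT_L^d\times \RR^d$, the combination $\nabla_v - t\nabla_x$ commutes exactly with free transport and couples painlessly with $\nu\Delta_v$, but on $\bbS^2$ there is no such direct analogue; any natural candidate generates additional commutator terms coming from the curvature and from the second-order part $\nu \Delta_p$, and these must be absorbed into the hypocoercive energy by a careful choice of weights and cross-terms. The time scale $\nu^{-1/2}$ is precisely the longest one on which this absorption is sustainable, which is what forces the bound to break down beyond $\nu^{-1/2}$ and makes the subsequent regime (enhanced dissipation at rate $\nu^{1/2}$) a separate argument. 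Once the vector fields and the energy are constructed, the pointwise decay of $F$ and $K$ follow, and the Volterra framework closes the proof.
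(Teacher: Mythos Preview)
Your high-level strategy matches the paper's: reduce to a Volterra equation on $u_k$, verify the spectral condition by perturbation from $\nu=0$, and feed in pointwise decay of the source and kernel obtained from a vector-field/hypocoercive analysis of $\e^{tL_{1,k}}$. Two points, however, are genuine gaps rather than omitted details.

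\textbf{The perturbation argument for the spectral condition does not close as you state it.} You assert that spectral stability for the kernel $K^\nu$ on $\{\Re\lambda\ge 0\}$ follows by perturbation from $K^{inv}$. But the perturbation result requires $\|K^\nu-K^{inv}\|_{L^1(\RR_+)}\to 0$, and your mixing bounds on $K^\nu$ are only valid on $[0,\nu^{-1/2}]$: you have no control of $K^\nu(t)$ for $t>\nu^{-1/2}$ at this stage (enhanced dissipation is a separate argument). The paper resolves this by \emph{truncating}: it sets $\tilde K^\nu:=K^\nu\ind_{t\le\nu^{-1/2}}$, $\tilde U^\nu:=U^\nu\ind_{t\le\nu^{-1/2}}$, shows $\|\tilde K^\nu-K^{inv}\|_{L^1(\RR_+)}\to 0$, obtains a resolvent and the decay for the solution $\tilde u^\nu$ of the \emph{truncated} Volterra equation on all of $\RR_+$, and finally observes that $\tilde u^\nu=u^\nu$ on $[0,\nu^{-1/2}]$ by uniqueness. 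Without this truncation your argument cannot be completed.

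\textbf{The decay mechanism is not a Gr\"onwall on the energy.} The hypocoercive functional does not itself yield $(1+t)^{-2}$ decay of the moments; it only shows that $X:=J_\nu\psi$ (and $J_\nu X$) stay bounded in suitable norms up to $t\sim\nu^{-1/2}$. The $(1+t)^{-1}$ and $(1+t)^{-2}$ come from integration by parts in the moment integrals: one writes $\nabla(p\cdot e)\psi=\frac{1}{i\beta}(X-\alpha\nabla\psi)$ with $\beta\sim t$, so each use of $J_\nu$ buys a factor $1/t$. Two further ingredients you do not mention are essential here: (i) the specific choice $\alpha=\cosh(\sqrt{-2i\nu}\,t)$, $\beta=(-2i\nu)^{-1/2}\sinh(\sqrt{-2i\nu}\,t)$ makes the commutator $[J_\nu,\partial_t-L_1]$ applied to $\psi$ vanish at \emph{one} pole, which is what permits control over the long time scale $\nu^{-1/2}$ rather than $\nu^{-1/3}$; (ii) because the commutator only vanishes at one pole, the estimates on $X$ must be localized away from the other pole via cutoffs, and a separate $L^\infty$ bound on $X$ near the good pole is needed to handle the singularity of $|\nabla(p\cdot e)|^{-1}$ in the integration by parts. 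The logarithmic loss in the statement comes precisely from balancing this $L^\infty$ pole contribution against the localized $L^2$ estimate.
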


After the time \(\nu^{-1/2}\), we have the mixing estimates with an
additional log factor until the enhanced dissipation takes over.

\begin{theorem}[\textbf{Diffusive case, enhanced dissipation at timescale $\nu^{-1/2}$}]\label{thm3}
  Under the assumptions of \cref{thm2}, there exist
  $M, \eta, \nu_0 > 0$ depending on $\Gamma$, and
  $C = C(\delta,\Gamma)$ such that for $0 < \nu < \nu_0$ and
  $t \ge \nu^{-1/2}$ it holds that
  \begin{align}
    |u_k(t)|
    &\le C \min \Big( |\ln(\nu)|^M \nu ,  \e^{-\eta \nu^{1/2} t} \Big)
      \| \psi^{in}_{k} \|_{H^{2+\delta}(\bbS^2)}, \\
    \| \psi_k(t)  \|_{L^2(\bbS^2)}
    &\le   C (1+t) \e^{-\eta \nu^{1/2} t}  \| \psi^{in}_{k} \|_{H^{2+\delta}(\bbS^2)}.
  \end{align}
\end{theorem}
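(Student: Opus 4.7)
The plan is to glue \cref{thm2} to an enhanced dissipation estimate for the diffusive phase by splitting the analysis at $T_\nu := \nu^{-1/2}$. Up to $T_\nu$, the mixing bounds of \cref{thm2} are available; at that time, evaluating the factor $\ln(2+t)(1+t)^{-2}$ gives $|u_k(T_\nu)| \lesssim \nu\,|\ln \nu|\,\|\psi_k^{in}\|_{H^{2+\delta}}$, while $\|\psi_k(T_\nu)\|_{H^{-1-\delta}} \lesssim \nu^{1/2}|\ln \nu|\,\|\psi_k^{in}\|_{H^{2+\delta}}$. Since the full equation admits standard energy estimates in $L^2(\bbS^2)$ (or $H^s$) with at most a $\nu$-polynomial growth rate, one can trade a small amount of regularity to obtain an $L^2(\bbS^2)$ bound on $\psi_k(T_\nu)$ of size $\nu^{1/2}|\ln\nu|^c$; this serves as the Cauchy datum for the second phase $t \ge T_\nu$. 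The rate $\nu^{1/2}$ is dictated by the geometry of the sphere: the Hamiltonian $p \mapsto k\cdot p$ has two non-degenerate critical points at $\pm k$, and quadratic degeneracy in a hypocoercive framework yields enhanced dissipation at rate $\nu^{1/2}$ instead of the usual $\nu^{1/3}$.

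\textbf{Enhanced dissipation and Duhamel.} For $t \ge T_\nu$ I would use the enhanced dissipation estimate for the drift-diffusion semigroup $S(t)$ generated by $-ik\cdot p + \nu\Delta_p$ on $\bbS^2$, which (from the hypocoercive / vector-field method of the earlier sections) gives $\|S(t)\|_{L^2 \to L^2} \lesssim e^{-\eta\nu^{1/2}t}$ for some $\eta>0$. The full linear equation \eqref{LSS-mode} is then handled by Duhamel,
\begin{equation*}
  \psi_k(t) = S(t-T_\nu)\psi_k(T_\nu)
  + \frac{3\Gamma}{4\pi}\int_{T_\nu}^{t} S(t-s)\bigl(p\otimes p : E(u_k(s))\bigr)\,\dd s,
\end{equation*}
with $u_k$ given by the nonlocal moment identity in \eqref{LSS-mode}. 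Taking the appropriate moment in $p$ produces a Volterra-type equation for $u_k$ on $[T_\nu,\infty)$ whose kernel is a controlled perturbation (by a diffusive Volterra kernel) of the inviscid kernel used in \cref{thm1,thm2}. Since $\Gamma<\Gamma_c$, the general Volterra result invoked in those theorems applies uniformly in $\nu$ small enough, and propagates the source bound $|u_k(T_\nu)|\lesssim \nu|\ln\nu|$ forward as $|u_k(t)| \lesssim |\ln\nu|^M\nu$, while also producing the $e^{-\eta\nu^{1/2}t}$ alternative from the semigroup decay; the stated $\min$ bound on $u_k$ follows by taking the best of the two. Plugging this back into the Duhamel formula yields the $L^2$ bound on $\psi_k$, where the factor $(1+t)$ reflects the temporal weight introduced by the commuting vector field used in the enhanced dissipation estimate.

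\textbf{Main obstacle.} The hard part is ensuring that the self-consistent coupling does not destroy the $\nu^{1/2}$ enhanced dissipation rate. One has to verify that the Volterra kernel associated with the moment equation for $u_k$ remains, uniformly in $\nu\in(0,\nu_0)$, a small perturbation (in the right weighted norm) of the inviscid kernel whose invertibility on the Laplace side is guaranteed by $\Gamma<\Gamma_c$. Two technical points must be tracked carefully: first, the derivative loss from the coupling (the factor $p\otimes p : E(u_k)$) is compensated only by using the vector-field method on $\bbS^2$ developed for \cref{thm2}, hence the $(1+t)$ prefactor on the $L^2$ bound for $\psi_k$; second, the powers $|\ln\nu|^M$ accumulate from balancing, at the matching time $T_\nu$, the low-regularity mixing estimates of \cref{thm2} against the $L^2$-based dissipation for $t\ge T_\nu$, which requires inter\-polation between $H^{-1-\delta}$ and $H^{2+\delta}$ and absorbs the $\ln(2+t)$ factors present in the mixing phase.
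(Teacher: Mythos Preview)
Your proposal correctly identifies the overall architecture (Volterra equation for $u_k$, enhanced dissipation for the semigroup, Duhamel for $\psi_k$), but there is a genuine gap that would make the argument fail as written.

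The crux is the intermediate time window $[\nu^{-1/2},\,c\nu^{-1/2}|\ln\nu|]$. You propose to glue the mixing bound of \cref{thm2} (valid only up to $T_\nu=\nu^{-1/2}$) directly to the semigroup bound $\|S(t)\|_{L^2\to L^2}\lesssim \e^{-\eps_0\nu^{1/2}t}$. But on this window the exponential bound is only $O(1)$, so using it for the Volterra kernel gives
\[
\int_{\nu^{-1/2}}^{\infty}|K^\nu(t)|\,\dd t \;\lesssim\; \int_{\nu^{-1/2}}^{\infty}\e^{-\eps_0\nu^{1/2}t}\,\dd t \;\sim\; \nu^{-1/2},
\]
which diverges as $\nu\to 0$. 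Hence you cannot show $\|K^\nu-K^{\mathrm{inv}}\|_{L^1(\RR_+)}\to 0$, and the perturbative verification of the spectral condition \eqref{spectral} for the diffusive kernel --- which you correctly flag as the ``main obstacle'' --- does not close. Restarting the Volterra equation at $T_\nu$ does not help: after the time shift you still convolve against $K^\nu$ on all of $\RR_+$, so the same $L^1$ control is needed. The paper resolves this by proving \emph{extended} mixing estimates (the second half of \cref{thm:vector-field-decay}), giving $|K^\nu(t)|\lesssim \nu|\ln\nu|^M$ on $[\nu^{-1/2},\,c\nu^{-1/2}|\ln\nu|]$; this makes the contribution of that window $O(\nu^{1/2}|\ln\nu|^{M+1})$ and allows the exponential to take over only from $c\nu^{-1/2}|\ln\nu|$, where $\int \e^{-\eps_0\nu^{1/2}t}\,\dd t$ is finally small. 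The same extended estimate is also what yields the sharp $\nu|\ln\nu|^M$ bound on $u_k$; ``propagating'' the single value $u_k(T_\nu)$ through the Volterra equation does not give this, since the restarted source $U^\nu_*(t)=u[S(t-T_\nu)\psi_k(T_\nu)]$ is only $O(1)$ in $L^2$ for $t$ close to $T_\nu$.

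Two smaller points. Your claim that $\|\psi_k(T_\nu)\|_{L^2}\lesssim \nu^{1/2}|\ln\nu|^c$ by interpolation is incorrect: there is no $\nu$-polynomial a priori $H^s$ bound on the coupled equation to interpolate against (the naive energy estimate grows like $\e^{Ct}$), and in fact $\|\psi_k(T_\nu)\|_{L^2}$ is only $O(\|\psi^{in}\|_{H^{2+\delta}})$ via Duhamel plus enhanced dissipation. Finally, the $(1+t)$ factor in the $L^2$ bound for $\psi_k$ has nothing to do with the vector field: it comes simply from the convolution $\int_0^t \e^{-\eps_0\nu^{1/2}(t-s)}\e^{-\eta_0\nu^{1/2}s}\,\dd s\lesssim t\,\e^{-\min(\eps_0,\eta_0)\nu^{1/2}t}$ in the Duhamel representation once the exponential decay of $u_k$ is in hand.
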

A few remarks are in order.
\begin{remark}
  On the torus the largest length scale is \(L\), so that in the
  unscaled equation the original wavenumber $|k|$ is at least
  $2\pi/L$.  This leads to the upper bound
  \begin{equation}
    \Gamma \le \frac{\gamma |\alpha| L}{2\pi U_0}.
  \end{equation}
  Hence the stability condition on \(\Gamma\) can be translated into a
  maximal size of the torus.
\end{remark}
\begin{remark}
  \Cref{thm1} is the expression of an inviscid damping: it leads to a
  $O(t^{-2})$ decay for the velocity field $u_k$. Notice that $u_k$
  involves $\psi_k$ through an average in $p$: this average allows to
  take advantage of the mixing phenomenon. As a byproduct, one has a
  $O(t^{-1})$ decay of $\psi_k$ itself in weak topology (negative
  Sobolev space). The difference in the rate of decay is very much
  related to the special structure of $u_k$ in terms of $\psi_k$. We
  stress that these polynomial rates cannot be improved, even taking
  analytic initial data. This is a major difference with
  Landau damping, and is related to the fact that the orientation
  variable $p \in \bbS^2$ replaces the velocity variable
  $v \in \RR^3$.
\end{remark}

\begin{remark} \label{rem_gamma_c}

  In our proof of \cref{thm1}, the stability constant $\Gamma_c$ that
  we exhibit when $\eps = -1$ is sharp: it means that for
  \(\Gamma > \Gamma_c\) there exist eigenmodes with \(u_k\)
  growing. Concretely, condition $\Gamma < \Gamma_c$ is equivalent
  to the fact that some dispersion relation has no root in the
  unstable half-plane:
  \begin{align} \label{dispersion_relation}
    F_\gamma(z) \neq 0, \quad \quad \forall \Re(z) \ge 0.
  \end{align}
  Our rigorous analysis of this dispersion relation, including the
  identification of $\Gamma_c$, is inspired by the work of Penrose
  \cite{penrose-1960-electrostatic-maxwellian} on the stability of
  plasmas. It substitutes for the numerical analysis carried in
  \cite{saintillan-shelley-2008-instabilities}.
\end{remark}

\begin{remark}\label{rem_nu}
  \Cref{thm2}, dealing with the weakly diffusive setting, shows that
  for $\nu > 0$ small enough, the mixing phenomenon persists up to
  time $\nu^{-1/2}$, more generally up to time $C \nu^{-1/2}$ for
  arbitrary $C > 0$.  The fact that $\nu^{-1/2}$ is a natural time
  threshold for our problem is confirmed by \cref{thm3}: solutions of
  the equation experience an exponential decay after this typical
  threshold. This exponential decay before the natural diffusive
  timescale $\nu^{-1}$ reflects the well-known phenomenon of enhanced
  dissipation \cite{BCZ17,CKRZ08, CZDE20, CZG21}, evoked in the
  introduction, although one can notice again a strong qualitative
  difference between variable $p \in \bbS^2$ and variable
  $v \in \RR^3$. In the latter case, the enhanced dissipation would
  hold with typical time scale $\nu^{-1/3}$. Here, the typical time
  scale is much longer, which creates strong mathematical difficulties
  in showing mixing up to this time scale, and enhanced diffusion
  afterwards.

  For the transition time until the enhanced dissipation takes over,
  we show the persistence of the mixing estimates with additional log
  factors. In advection-diffusion problems, a diffusion-limited mixing
  behavior is often observed \cite{MD18}. That is, inviscid mixing
  does not persist as $t\to\infty$, rather the ratio of the
  $\dot{H}^{-1}$ to the $L^2$ norm converges to a positive
  constant. This corresponds to the existence of a characteristic
  filamentation length scale, often referred to as the Batchelor scale
  in the physics literature.
 \end{remark}

\begin{remark}

  While completing our manuscript, we noticed the release of the
  interesting preprint \cite{albritton-ohm-2022-preprint}, about the
  same Saintillan-Shelley model (except for the introduction of an
  additional diffusion in variable $x$). Although paper
  \cite{albritton-ohm-2022-preprint} and ours share a few common
  features, we believe that they are different enough to provide
  distinct and valuable insights into the stability of active
  suspensions.

  Regarding mixing, which is the focus of the present paper,
  \cite{albritton-ohm-2022-preprint} only contains a weaker version of
  our \cref{thm1}, showing that under condition
  \eqref{dispersion_relation}, a $L^2$ in time stability estimate
  $$ \int_{\RR_+} |u_k(t)|^2 (1+t)^{3-\epsilon} \dd t< \infty $$
  holds. Our extensive discussion of \eqref{dispersion_relation}, see
  \cref{rem_gamma_c}, as well as our optimal pointwise in time decay
  estimates are not covered. We stress that these pointwise estimates
  are based on the general \cref{thm_decay_Volterra} on
  Volterra equations, which is of independent interest.

  More importantly, \cite{albritton-ohm-2022-preprint} does not
  contain any analogue of our \cref{thm2}, which is the heart
  of our paper, and requires completely different arguments from the
  inviscid case.

  On the other hand, \cite{albritton-ohm-2022-preprint} contains the
  derivation of Taylor dispersion estimates when $x\in \RR^d$, and two
  nonlinear stability results (diffusion in $x$ is crucial
  there). First, in the case of pullers ($\eps = 1$), using an
  approach \`a la Guo~\cite{guo-2002-landau}, the authors prove
  nonlinear stability of the incoherent state $\Psi^{iso}$, both for
  $x \in \TT^d$ and $x \in \RR^d$ (but without enhanced
  dissipation). Second, in the case of pushers, they prove nonlinear
  stability of the incoherent state with enhanced diffusion, under the
  stringent assumption $\Gamma = o(\nu^{1/2})$, which allows to treat
  the model as a perturbation of the advection-diffusion equation.
  With regards to these last two results, and although we restrict
  here to a linear analysis, our \cref{thm3} is a significant
  progress.
\end{remark}

\subsection{Key ideas}
\label{sec:intro:key-ideas}

The evolution \eqref{LSS-mode} can be split as
\begin{equation}
  \label{eq:split-l1-l2}
  \partial_t \psi_k = L_{1,k} \psi_k + \bar L_{2,k} \cdot u_k[\psi_k]
\end{equation}
where
\begin{equation} \label{def_L1}
L_{1,k}:= -ik\cdot p + \nu \Delta_p
\end{equation}
is the advection-diffusion operator, and \(u_k\) is the
low-dimensional linear map from the kinetic distribution to the
macroscopic velocity field given by
\begin{equation} \label{def_uk}
  \begin{aligned}
    u_k[\psi]
    :=   i  \mathbb{P}_{k^\perp}  \sigma k, \quad
    \mathbb{P}_{k^\perp} = \left(I- k \otimes k \right), \quad
    \sigma  := \eps \int_{\bbS^2} \psi(p)\, p\otimes p\, \dd p.
  \end{aligned}
\end{equation}
Finally, $\bar L_{2,k}$ is the vector field independent of time
defined by
\begin{equation} \label{def_L2}
  \bar L_{2,k}(p)  := \frac{3 i \Gamma}{4\pi} (p \cdot k) \mathbb{P}_{k^\perp} p.
  \end{equation}
  Here, we have used that $u_k$ satisfies $k \cdot u = 0$ so that
  $$ \frac{3\Gamma}{4\pi} p \otimes p : E_k(u_k) =  \bar L_{2,k} \cdot u_k. $$
By Duhamel's formula
\begin{equation}
  \label{eq:duhamel-psi}
  \psi_k(t, \cdot) = \e^{tL_{1,k}} \psi_k^{in}
  + \int_0^t \e^{(t-s)L_{1,k}} \left( \bar L_{2,k} \cdot u_k[\psi_k(s, \cdot)] \right)\, \dd s.
\end{equation}
Applying the linear map \(u_k\) then yields the Volterra
equation
 \begin{equation}
  \label{eq:volterra-u}
   u_k(t) + \int_0^t K_k(t-s)\, u_k(s)\, \dd s = U_k(t)
  \end{equation}
  with the source
\begin{equation}
  \label{eq:volterra-source}
  U_k(t) = u_k[\e^{tL_{1,k}} \psi_k^{in}]
\end{equation}
and the matrix kernel $K_k$ defined by
\begin{equation}
  \label{eq:volterra-kernel}
  K_k(t) v = - u_k[\e^{tL_{1,k}} (\bar L_{2,k} \cdot v)], \quad v \in \CC^3.
\end{equation}
In the proof of Theorems \ref{thm1}, \ref{thm2} and \ref{thm3} the
key point is to work on the Volterra equation \eqref{eq:volterra-u} and
to show that $u_k$ decays like $t^{-2}$ at infinity. Once this is
obtained, to go back to the evolution of \(\psi_k\) and estimate its
decay in negative Sobolev norms is direct. Regarding the decay of
$u_k$, there are two main steps:
\begin{itemize}
\item[i)] To show that the source term $U_k$ and the kernel $K_k$
  decay like in the corresponding theorem.
\item[ii)] To show that under appropriate conditions on $\Gamma$, this
  decay passes to the solution $u_k$ of the Volterra equation.
\end{itemize}
Let us stress that the mathematical questions raised by the second
step are not specific to our model, and can be asked for any Volterra
equation. They will be examined in \cref{sec:Volterra}. Obviously,
before relating the decay rate of the solution to the decay rates of
the source and the kernel, a prerequisite is the stability of the
solution. Using the classical theory of Volterra equation, one knows
that it is equivalent to the spectral condition
\begin{equation} \label{spectral_Volterra}
 \det(I + \mathcal{L}K_k(z)) \neq 0, \quad \forall \Re(z) \ge 0,
\end{equation}
where $\mathcal{L}$ is the Laplace transform.  Under this condition,
we achieve Step ii) by proving \cref{thm_decay_Volterra} in
\cref{sec:Volterra}. We provide a short proof, that uses the notion of
Volterra kernel of type $L^\infty$ and the underlying structure of
Banach algebra of this class of kernels

As regards Step i), there is a main difference between $\nu=0$ and
$\nu > 0$. In the inviscid case \(\nu=0\), considered in Section
\ref{sec:inviscid}, the evolution \(\e^{tL_{1,k}}\) can be solved
explicitly: the kernel and sources are given by Fourier transforms on
the sphere, whose decay properties are well-known, and analyzed
through stationary phase arguments. This implies that \(U_k\) and
\(K_k\) are \(O(t^{-2})\). Moreover, the spectral stability condition
can be fully understood through an analysis \`a la Penrose, see
\cref{sec:inviscid}, completing the proof of \cref{thm1}.

For the diffusive case \(\nu>0\), we cannot solve the evolution
\(\e^{tL_{1,k}}\) explicitly and the main effort in proving
Theorem~\ref{thm2} and \ref{thm3} is to obtain optimal mixing
estimates on the advection-diffusion equation. This is performed in
\cref{sec:diffusive}. In terms of the advection-diffusion evolution
\(\e^{tL_1}\), the first result is an enhanced dissipation result,
which can be proved by adapting hypocoercive methods:
\begin{proposition}\label{prop:hypocoercf}
  There exist constants $C_0,\eps_0,\nu_0>0$ with the following
  property: for every $\nu_0>\nu>0$ and \(\psi^{in}\) the
  advection-diffusion evolution is bounded for all \(t \ge 0\) as
  \begin{align}\label{eq:enhanceddissip}
    \| \e^{t L_{1,k}} \psi^{in}\|_{L^2(\bbS^2)}
    \leq   C_0 \e^{-\eps_0 \nu^{1/2} t}
    \| \psi^{in} \|_{L^2(\bbS^2)}.
  \end{align}
\end{proposition}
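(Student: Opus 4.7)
The plan is to adapt the Villani/H\'erau--Nier hypocoercive framework, with the enhanced-dissipation refinements in the spirit of Bedrossian and Coti Zelati, to the advection-diffusion operator $L_{1,k} = -ik\cdot p + \nu \Delta_p$ on $\bbS^2$. Write $M := k\cdot p$, viewed as a multiplication operator; then on the sphere $\nabla_p M = (\Id - p\otimes p)k$ and $|\nabla_p M|^2 = 1 - (k\cdot p)^2$, which vanishes quadratically at the two poles $p = \pm k$.

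First, I would introduce a modified energy of the form
\begin{equation*}
\Phi_\nu(\psi) := \|\psi\|_{L^2}^2 + a\,\nu^{1/2}\|\nabla_p\psi\|_{L^2}^2 + 2b\,\nu^{1/2}\Re\langle i\,\nabla_p\psi,\,(\nabla_p M)\psi\rangle + c\,\|(\nabla_p M)\psi\|_{L^2}^2,
\end{equation*}
with absolute positive constants $a,b,c$ chosen so that $\Phi_\nu$ is comparable to $\|\psi\|_{L^2}^2 + \nu^{1/2}\|\nabla_p\psi\|_{L^2}^2$ (the cross term being absorbed by Cauchy--Schwarz). Differentiating $\Phi_\nu$ along $\partial_t\psi = L_{1,k}\psi$ and using the commutator identity $[\nabla_p, -iM]\psi = -i(\nabla_p M)\psi$, the dominant contribution of the mixed term is a coercive $-c_1\nu^{1/2}\|(\nabla_p M)\psi\|_{L^2}^2$ together with the plain dissipations $-\nu\|\nabla_p\psi\|_{L^2}^2$ and $-a\nu^{3/2}\|\nabla_p^2 \psi\|_{L^2}^2$, modulo commutator errors built from smooth derivatives of $M$ on $\bbS^2$ that can be absorbed when $a,b,c$ are small.

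Second, I would establish a weighted Poincar\'e-type inequality on $\bbS^2$:
\begin{equation*}
\nu^{1/2}\|\psi\|_{L^2}^2 \leq C\,\bigl(\|(\nabla_p M)\psi\|_{L^2}^2 + \nu\|\nabla_p\psi\|_{L^2}^2\bigr).
\end{equation*}
This follows from a boundary-layer argument at the two poles: outside a geodesic neighborhood of radius $\sim \nu^{1/4}$ the weight $1-(k\cdot p)^2$ dominates $\nu^{1/2}$, while inside the layer the diffusive term $\nu\|\nabla_p\psi\|_{L^2}^2$ controls $\nu^{1/2}\|\psi\|_{L^2}^2$ by a rescaled Poincar\'e estimate in local coordinates, exploiting that $M$ vanishes to exactly second order at the poles. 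Combining this with the identity for $\ddt \Phi_\nu$ gives $\ddt \Phi_\nu \leq -c_0\,\nu^{1/2}\,\Phi_\nu$ for $\nu<\nu_0$, and Gr\"onwall together with the equivalence $\Phi_\nu \simeq \|\psi\|_{L^2}^2$ yields \eqref{eq:enhanceddissip}.

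The main obstacle is precisely the degeneracy of $\nabla_p M$ at $p=\pm k$: naive Villani hypocoercivity, which assumes $\nabla_p M$ bounded away from zero, produces no decay, and the $\nu^{1/2}$-weighted Poincar\'e inequality is what forces the sharp rate $\nu^{1/2}$, in contrast with the $\nu^{1/3}$ rate in the Euclidean shear setting where the drift has no critical points. Carefully quantifying the commutator error terms (so that the small factor $\nu^{1/2}$ is not lost in the absorption) and implementing the boundary-layer Poincar\'e estimate rigorously on $\bbS^2$ are the technically delicate parts of the argument.
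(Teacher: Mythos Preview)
Your overall plan---a hypocoercive functional plus a weighted Poincar\'e/interpolation inequality on $\bbS^2$---is exactly the paper's strategy, and your interpolation inequality $\nu^{1/2}\|\psi\|^2 \lesssim \|(\nabla_p M)\psi\|^2 + \nu\|\nabla_p\psi\|^2$ is the paper's key Lemma (proved there by an elementary integration by parts rather than a boundary-layer argument, but your route would also work).

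The gap is in the \emph{scaling} of the coefficients in $\Phi_\nu$. With your choice $(\alpha,\beta,\gamma)=(a\nu^{1/2},\,b\nu^{1/2},\,c)$ the cross term produces only $-b\nu^{1/2}\|(\nabla_p M)\psi\|^2$ of damping. Feeding this and $-\nu\|\nabla_p\psi\|^2$ into your Poincar\'e inequality yields at best $\ddt\Phi_\nu \lesssim -\nu\,\Phi_\nu$, i.e.\ the trivial diffusive rate, not $\nu^{1/2}$. Worse, the transport contribution to the $H^1$ balance, namely $a\nu^{1/2}\|(\nabla_p M)\psi\|\,\|\nabla_p\psi\|$, cannot be absorbed into $-\nu\|\nabla_p\psi\|^2$ and $-b\nu^{1/2}\|(\nabla_p M)\psi\|^2$ with $\nu$-independent $a,b$: Young's inequality forces $a^2\lesssim b\,\nu^{1/2}$. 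The scaling that closes is $(\alpha,\beta,\gamma)\sim(\nu^{1/2},\,1,\,\nu^{-1/2})$; then the cross term delivers $-\|(\nabla_p M)\psi\|^2$ and the Poincar\'e gives the $\nu^{1/2}$ rate directly. But with $\gamma\sim\nu^{-1/2}$ you lose the equivalence $\Phi_\nu\simeq\|\psi\|^2$ at $t=0$, so a direct Gr\"onwall no longer yields an $L^2\to L^2$ bound without an extra regularization step.

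The paper sidesteps both issues by using \emph{time-dependent} weights $(a\nu t,\,b\nu t^2,\,c\nu t^3)$ with $a,b,c$ absolute constants: these vanish at $t=0$ (so $E[\psi](0)=\tfrac12\|\psi^{in}\|^2$ and the $L^2\to L^2$ estimate is automatic), and at the natural time $t\sim\nu^{-1/2}$ they reproduce exactly the scaling $(\nu^{1/2},1,\nu^{-1/2})$ above. The argument is then a discrete contraction: one shows $\|\psi(\lambda\nu^{-1/2})\|\le \rho\,\|\psi^{in}\|$ for some $\rho<1$ and iterates. Fixing your coefficients to the correct $\nu$-powers and adding a short smoothing step would also work, but the time-dependent functional is the cleaner route.
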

The main novelty is that the inviscid mixing estimates persists until
the enhanced dissipation takes over:
\begin{proposition}\label{thm:vector-field-decay}
  There exist $\nu_0 > 0$ and \(M\ge 2\) such that for \(c,\delta > 0\)
  there exists a constant \(C\) with the following bounds: For any \(\nu \le \nu_0\),
  any scalar function $F$ and
  $t \le \nu^{-1/2}$ it holds that
  \begin{equation}
    \left| \int_{\bbS^2} (\e^{tL_{1,k}} \psi^{in}) F\, \dd p\right|
    \le C \frac{\sqrt{\ln (2+t)}}{(1+t)} \|F\|_{H^{1+\delta}} \|\psi^{in}\|_{H^{1+\delta}}
  \end{equation}
  and
  \begin{equation}
    \left| \int_{\bbS^2} (\e^{tL_{1,k}} \psi^{in}) F\, \nabla(p \cdot k)\, \dd p\right|
    \le C \frac{\sqrt{\ln (2+t)}}{(1+t)^2} \|F\|_{H^{2+\delta}}
    \|\psi^{in}\|_{H^{2+\delta}}.
  \end{equation}
  Further, for \(t \in [\nu^{-1/2},c\nu^{-1/2}|\ln \nu|]\) it holds
  that
  \begin{equation}
    \left| \int_{\bbS^2} (\e^{tL_{1,k}} \psi^{in}) F\, \dd p\right|
    \le C \nu^{1/2} |\ln \nu |^M  \|F\|_{H^{1+\delta}} \|\psi^{in}\|_{H^{1+\delta}}
  \end{equation}
  and
  \begin{equation}
    \left| \int_{\bbS^2} (\e^{tL_{1,k}} \psi^{in}) F\, \nabla(p \cdot k)\, \dd p\right|
    \le C \nu  |\ln \nu|^M    \|F\|_{H^{2+\delta}}
    \|\psi^{in}\|_{H^{2+\delta}}.
  \end{equation}
\end{proposition}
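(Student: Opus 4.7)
The plan is to split the proof into the two time regimes and combine a vector field argument with the enhanced dissipation of \cref{prop:hypocoercf}. For $t \le \nu^{-1/2}$ I treat $\e^{tL_{1,k}}$ as a controlled perturbation of the inviscid semigroup $\e^{-it k\cdot p}$ by working with vector fields on $\bbS^2$ that almost commute with $L_{1,k}$ on this time scale. For $t \in [\nu^{-1/2}, c\nu^{-1/2}|\ln\nu|]$ the endpoint estimate at $t = \nu^{-1/2}$ is propagated using the $L^2$ enhanced dissipation.

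First I would settle the inviscid case $\nu = 0$, which sets the template for the integration-by-parts strategy. There the semigroup acts as multiplication by $\e^{-it k \cdot p}$, and after choosing spherical coordinates with $k$ as the north pole and integrating out the azimuthal angle, the two expressions reduce to one-dimensional oscillatory integrals $\int_{-1}^1 \e^{-itu} h(u)\, \dd u$. One integration by parts yields $t^{-1}$ decay under $H^{1+\delta}$ regularity of $\psi^{in}$ and $F$. For the second integral the factor $\nabla(k \cdot p)$ vanishes to first order at the poles $p = \pm k$, contributing an extra $(1-u^2)$ in the amplitude; this endpoint vanishing permits a second integration by parts, improving the rate to $t^{-2}$.

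For the diffusive case with $t \le \nu^{-1/2}$, I would apply a vector field method adapted to $L_{1,k}$ on the sphere. The natural exact symmetry is the rotation $R_k$ around the axis $k$, which commutes with both the multiplier $k \cdot p$ and the Laplace--Beltrami operator. Transverse to this rotation I would construct, in the spirit of the classical Klainerman field $\nabla_v + t\nabla_x$ of Euclidean kinetic theory, a time-dependent tangential vector field $X_t$ on $\bbS^2$ such that $[L_{1,k}, X_t]$ stays bounded up to $t \sim \nu^{-1/2}$. Expanding the semigroup by Duhamel around the inviscid propagator yields errors of order $\nu s^2$ per step, as seen from
\begin{equation*}
\Delta_p\bigl(\e^{-isk\cdot p} f\bigr) = \e^{-isk\cdot p}\Bigl(\Delta_p f - 2is\,\nabla(k\cdot p)\cdot \nabla_p f - s^2 |\nabla(k\cdot p)|^2 f - is\,\Delta_p(k\cdot p)\,f\Bigr),
\end{equation*}
and this is precisely what fixes the threshold $t \sim \nu^{-1/2}$. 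Combining the inviscid integrations by parts with energy estimates on $R_k$- and $X_t$-derivatives, the inviscid decay should survive up to the threshold; a careful dyadic summation of the Duhamel iteration is expected to produce the additional logarithmic factor $\sqrt{\ln(2+t)}$.

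In the second regime $t \in [\nu^{-1/2}, c\nu^{-1/2}|\ln\nu|]$, I would write $\e^{tL_{1,k}} = \e^{(t-\nu^{-1/2})L_{1,k}}\circ \e^{\nu^{-1/2}L_{1,k}}$, use Cauchy--Schwarz, and apply \cref{prop:hypocoercf} to obtain an $L^2$ decay factor $\e^{-\eps_0 \nu^{1/2}(t - \nu^{-1/2})}$, which is at most $\nu^{c\eps_0}$ on the stated interval. Combined with the endpoint bound from the first regime at $t = \nu^{-1/2}$, and choosing $c$ large enough (and $M$ to absorb the logarithms generated by regularity transfer of $F$ through the advection--diffusion), this produces the $\nu^{1/2}|\ln\nu|^M$ and $\nu |\ln\nu|^M$ bounds. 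The main obstacle is the construction of the time-dependent vector field $X_t$: unlike on $\RR^d$, no exact commuting object exists because of the curvature of $\bbS^2$, and one must absorb the resulting curvature commutators into an adapted hypocoercive energy functional so as not to spoil the inviscid decay exponents over the full interval $[0, \nu^{-1/2}]$.
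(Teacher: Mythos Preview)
Your outline contains a genuine gap in both regimes.

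For $t\le \nu^{-1/2}$, your Duhamel expansion around the inviscid propagator produces, as you correctly compute, errors of order $\nu s^2$. But after time integration this gives a contribution of size $\nu t^3$, which at $t=\nu^{-1/2}$ is $\nu^{-1/2}$ and diverges. The naive vector field $J=\nabla+it\nabla(p\cdot k)$ (or any Duhamel-perturbative variant of it) can only be controlled up to time $\nu^{-1/3}$, not $\nu^{-1/2}$; the paper makes exactly this point. What is missing from your sketch is the mechanism that bridges $\nu^{-1/3}$ and $\nu^{-1/2}$: the paper replaces $J$ by $J_\nu=\alpha(t)\nabla+i\beta(t)\nabla(p\cdot k)$ with $\alpha=\cosh(\sqrt{-2i\nu}\,t)$, $\beta=(-2i\nu)^{-1/2}\sinh(\sqrt{-2i\nu}\,t)$ chosen so that the commutator becomes $2i\beta\nu\,\nabla([p\cdot k-1]\psi)$, which \emph{vanishes at the pole} $p=k$. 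This vanishing is then exploited through the time-weighted hypocoercive functional, which yields improved control on quantities like $\|\nabla(p\cdot k)\psi\|$ and $\|\nabla(\nabla(p\cdot k)\psi)\|$. The argument further requires localization by cutoffs away from $-k$ (treating the two poles separately), an iteration to bound $J_\nu^2\psi$, and an $L^\infty$ bound near the poles obtained via the maximum principle. The $\sqrt{\ln(2+t)}$ factor arises not from a dyadic Duhamel summation but from balancing the cutoff scale $\epsilon\sim t^{-1/2}$ against the singularity $|\sin\theta|^{-1}$ in the integration-by-parts identity.

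For the second regime, your semigroup composition plus Cauchy--Schwarz plus \cref{prop:hypocoercf} loses the weak-topology gain: at $t$ slightly above $\nu^{-1/2}$ it only yields $\|F\|\,\e^{-\eps_0\nu^{1/2}(t-\nu^{-1/2})}\|\psi^{in}\|=O(1)$, not $O(\nu^{1/2})$. The endpoint mixing bound at $t=\nu^{-1/2}$ is a statement about $\langle\psi,F\rangle$, not about $\|\psi\|_{L^2}$, so it cannot be fed into the $L^2\to L^2$ enhanced dissipation. The paper instead re-runs the vector field argument on $[\nu^{-1/2},c\nu^{-1/2}|\ln\nu|]$, keeping track of the now exponentially growing $\alpha,\beta$; the losses are polynomial in $|\ln\nu|$ and are absorbed into the exponent $M$.
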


The key idea is the use of the vector field method. This method,
introduced for the analysis of decay properties of wave equations
\cite{Klainerman85}, was used recently in the context of Vlasov type
equations on $\RR^d$ \cite{CLN21}. It allowed to exhibit mixing when
$0 < \nu \ll 1$, despite the loss of explicit representation formula.
Let us just mention at this stage that the key point is to construct
good vector fields, meaning that they commute to the
advection-diffusion operator $i k \cdot p - \nu \Delta_p$. In the
Euclidean setting~\eqref{eq:euclidean-free-transport}, the natural
choice $J = \nabla_v + i t k$ works well over the time scale
\(\nu^{-1/3}\), after which enhanced dissipation dominates. On the
sphere this has no good analogue as the obvious generalization
$J = \nabla_p + i \nabla(p\cdot k) t$, when applied to a solution
$\psi$ of the advection-diffusion equation, creates commutators of the
form $\nu t \nabla \psi$, with a time integral that can not be
controlled over the large time scale $\nu^{-1/2}$. We overcome this
difficulty by combining two ideas. We first introduce a better vector
field,
\begin{equation*}
  J_\nu =   \alpha(t) \nabla + i \beta(t) \nabla(p\cdot k), \quad \text{where} \quad
  \alpha(t): = \cosh(\sqrt{-2i\nu}\, t), \quad
  \beta(t): = \frac{1}{\sqrt{-2i\nu}} \sinh(\sqrt{-2i\nu}\, t).
\end{equation*}
Roughly, it allows to replace the bad commutator by one that
behaves like $\nu t (1-k \cdot p)\nabla \psi$, hence vanishing near
$\pm k$. Then, we adapt Villani's hypocoercivity method
\cite{villani-2009-hypocoercivity}, using additional weights in
time. The key point of this adaptation is that, besides proving
enhanced diffusion at time scale $\nu^{-1/2}$, see
\cref{rem_nu}, it provides extra decay information for some quantities
vanishing near the poles $\pm k$ of the sphere. This allows to control
the commutator and apply the vector field method. All details will be
found in \cref{sec:diffusive}.

Using the decay estimate, the proof of \cref{thm2} is achieved in
\cref{sub:conclusion:mixing}.  Regarding the enhanced dissipation
estimates of \cref{thm3}, a key point is to understand the behaviour of
the diffusive Volterra kernel past time $\nu^{-1/2}$, and notably to
check the spectral condition \eqref{spectral_Volterra} for small
$\nu$. In the Euclidean case, see for instance \cite{CLN21}, this can
be proved perturbatively from the inviscid case, using the decay of
the diffusive kernel uniformly in $\nu$. In our spherical setting,
this is where we need the second part of
\cref{thm:vector-field-decay}. This analysis and the proof of
\cref{thm3} are performed in \cref{sub:conclusion:enhanced}.

\section{Volterra equations} \label{sec:Volterra}

In this section we study a general Volterra equation:
\begin{align} \label{classic_Volterra1}
 u(t)  + K \star u(t) = v(t), \quad t \ge 0,
\end{align}
with unknown $u : \RR_+ \rightarrow \CC^n$, and data
$K : \RR_+ \rightarrow M_n(\CC)$ (the kernel),
$v : \RR_+ \rightarrow \CC^n$ (the source). The convolution is here on
$\RR_+$, defined by $F \star g(t) = \int_0^t F(s)\, g(t-s)\, \dd s$.
We are interested in the global in time solvability of this equation,
and in accurate polynomial decay estimates for solution $u$ under
assumptions of polynomial decay on $K$ and $v$.  The classical path to
study this equation, detailed in
\cite{gripenberg-londen-staffans-1990-volterra}, is to construct the
so-called resolvent of the equation, that is a matrix-valued
$R : \RR_+ \rightarrow M_n(\CC)$ satisfying
\begin{align}\label{classic_Volterra2}
 R(t)  + K \star R(t) =   R(t)  +  R  \star K(t) = K(t).
\end{align}
Note that the convolution product does not commute when $n > 1$, so
that in this case, the resolvent satisfies two distinct equations. It
is then straightforward to check that if $K$ and $v$ are integrable,
and if there exists an integrable solution $R$ of
\eqref{classic_Volterra2}, then an integrable solution $u$ of
\eqref{classic_Volterra1} is given by $u = v - R \star v$ and is
unique.

For the construction of the resolvent, a vague idea of proof is the
following. Assuming there is a solution $R$ to
\eqref{classic_Volterra2}, and extending both $R$ and $K$ by zero on
$\RR_-$, the relation \eqref{classic_Volterra2} holds now for all $t$
on $\RR$, replacing the convolution on $\RR_+$ by the usual
convolution on $\RR$. Taking the Fourier transform yields in
particular
\begin{align} \label{Volterra_Fourier}
 (I + \hat{K}(\xi))   \hat{R}(\xi) = \hat{K}(\xi), \quad \forall \xi \in \RR.
 \end{align}
This suggests to impose the condition
\begin{align} \label{naive_spectral}
 \det (I + \hat{K}) \neq 0 \quad \text{ on } \: \RR
\end{align}
and to define
\begin{align} \label{formula_resolvent}
  R = \mathcal{F}^{-1} (I + \hat{K})^{-1} \hat{K}.
\end{align}
This is however too quick. We remind that, in order to obtain
\eqref{Volterra_Fourier}, we have extended our hypothetical solution $R$
by zero on $\RR_-$. Hence, we must not only verify that the formula
\eqref{formula_resolvent} makes sense, but also that it gives a
function that vanishes on $\RR_{-}$. Condition
\eqref{Volterra_Fourier} is not sufficient for that. Still, under the
stronger condition
\begin{align} \label{spectral}
 \det (I + \cL K(z)) \neq 0 \quad \text{ for all } \: \Re z \ge 0,
\end{align}
where $\cL K(z) = \int_0^{+\infty} \e^{- zt} K(t)\, \dd t$ is the
Laplace transform of the kernel, everything goes nicely. This is the
content of following theorem.
\begin{theorem}[{\cite[Chapter~2]{gripenberg-londen-staffans-1990-volterra}}]\label{thm:existVolterra}
  Assume that $K \in L^1(\RR_+, M_n(\CC))$, and that the spectral
  condition \eqref{spectral} is satisfied. Then, there exists a unique
  solution $R \in L^1(\RR_+, M_n(\CC))$ of
  \eqref{classic_Volterra2}. As a consequence, for any
  $v \in L^1(\RR_+, \CC^n)$, \eqref{classic_Volterra1} has a unique
  solution $u \in L^1(\RR_+, \CC^n)$.
\end{theorem}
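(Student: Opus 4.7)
The plan is to reduce the existence of the resolvent $R$ to an invertibility problem in the Banach algebra $\cA := M_n(\CC)\,\delta_0 \oplus L^1(\RR_+, M_n(\CC))$, the unitization of $L^1(\RR_+, M_n(\CC))$ equipped with convolution $\star$. Indeed, \eqref{classic_Volterra2} is equivalent to
\begin{equation*}
  (\delta_0 I + K) \star (\delta_0 I - R) = \delta_0 I = (\delta_0 I - R) \star (\delta_0 I + K),
\end{equation*}
so the existence and uniqueness of $R \in L^1$ satisfying both identities amounts to invertibility of $\delta_0 I + K$ in $\cA$, with inverse necessarily of the form $\delta_0 I - R$ (the $\delta_0$-parts must balance). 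Once $R$ is built, $u := v - R \star v$ solves \eqref{classic_Volterra1} by the first resolvent identity, and uniqueness follows by left-convolving any $L^1$ solution $\tilde u$ with $\delta_0 I - R$ and using the second identity.

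The heart of the matter is therefore a Wiener--Pitt-type spectral description of $\cA$. The Laplace transform furnishes an algebra homomorphism $\cA \to C(\overline{\CC_+} \cup \{\infty\}, M_n(\CC))$, $a\delta_0 + f \mapsto (z \mapsto aI + \cL f(z))$ with value $aI$ at infinity; continuity on $\overline{\CC_+}$ and decay at infinity of $\cL f$ come from dominated convergence and the Riemann--Lebesgue lemma. The classical scalar Wiener theorem identifies the maximal ideal space of $\CC \delta_0 \oplus L^1(\RR_+,\CC)$ with $\overline{\CC_+} \cup \{\infty\}$ and asserts that $a\delta_0 + f$ is invertible in this scalar algebra if and only if $aI + \cL f$ does not vanish there. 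The matrix case reduces to the scalar one via the cofactor identity
\begin{equation*}
  (aI + \cL f)^{-1} = \det(aI + \cL f)^{-1} \cdot \mathrm{adj}(aI + \cL f),
\end{equation*}
since the determinant and the entries of the adjugate are polynomial combinations of the matrix entries and hence lie in the scalar Wiener algebra: non-vanishing of $\det(aI + \cL f)$ on $\overline{\CC_+} \cup \{\infty\}$ inverts the determinant there, and multiplying componentwise by the adjugate yields the matrix inverse inside $\cA$.

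Applied to $\delta_0 I + K$, the spectral hypothesis \eqref{spectral} gives $\det(I + \cL K(z)) \neq 0$ on $\overline{\CC_+}$, while at infinity the value is $\det I = 1$; hence $\delta_0 I + K$ is invertible and $R$ is obtained as the $L^1$ part of the inverse. The main obstacle is the scalar Wiener step: showing that pointwise non-vanishing of the Laplace symbol on $\overline{\CC_+} \cup \{\infty\}$ already forces an $L^1$ inverse. The slickest route is Gelfand's, after verifying that every character of $\CC \delta_0 \oplus L^1(\RR_+)$ is either evaluation at some $z \in \overline{\CC_+}$ or evaluation at infinity (coefficient of $\delta_0$); a more explicit alternative uses a Paley--Wiener contour representation of the inverse as a Laplace integral on a shifted vertical line, combined with a dyadic localization argument to control the resulting $L^1$ norm.
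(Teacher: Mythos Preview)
The paper does not supply its own proof of this theorem: it is quoted from \cite[Chapter~2]{gripenberg-londen-staffans-1990-volterra} and used as a black box. Your outline is correct and is in fact the classical argument found in that reference: unitize $L^1(\RR_+,M_n(\CC))$ to a Banach algebra with identity, observe that \eqref{classic_Volterra2} is equivalent to inverting $I\delta_0+K$ there, reduce the matrix case to the scalar one via the adjugate/determinant identity, and invoke the half-line Wiener theorem identifying the Gelfand spectrum of $\CC\delta_0\oplus L^1(\RR_+)$ with $\overline{\CC_+}\cup\{\infty\}$. The only step you leave at the level of a claim is this last identification of characters; that is the genuine analytic content (it requires showing every multiplicative functional on $L^1(\RR_+)$ is $f\mapsto \cL f(z)$ for some $\Re z\ge 0$), but it is standard and exactly what the cited chapter establishes.
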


Once global existence of an $L^1$ solution has been obtained, a
natural question is its decay, depending on the decay of the kernel
and data. While many works, for instance on Landau damping, treat this
kind of question, most results fall into one of the following two
types. Either they use weighted spaces and use the stronger assumption
\(\int_0^\infty |K(t)|\, (1+t)^\alpha\, \dd t\) to conclude from \(v\)
in \(O((1+t)^{-\alpha})\) that \(u\) is \(O((1+t)^{-\alpha})\), e.g.\
\cite{dietert-2016-stability-kuramoto,fernandez-gerard-varet-giacomin-2016-landau-kuramoto}.
Or they establish $L^p$ in time estimates without loss for
$p=1,2$. Indeed, while weighted $L^1$ estimates behave well with
respect to convolution formulas such as \eqref{classic_Volterra2},
$L^2$ estimates may be established using \eqref{Volterra_Fourier} and
Plancherel formula, see \cite{albritton-ohm-2022-preprint}.

Note that this is a significant loss of information, and a look at
\eqref{classic_Volterra1} shows that we may expect much more: if
\(K,v\) are \(O((1+t)^{-\alpha})\) for \(\alpha>1\), one may hope that
$u$ is \(O((1+t)^{-\alpha})\) as well because this property is stable
by convolution. Our main result exactly shows this:
\begin{theorem} \label{thm_decay_Volterra}

  Assume that \eqref{spectral} holds. Let $\alpha > 1$, and assume
  that $K$ satisfies
  \begin{align} \label{assumption_exp_K}
    |K(t)| \le C_K (1+t)^{-\alpha}, \quad t \ge 0,
  \end{align}
  for \(\alpha>1\) and a constant \(C_K\).  Then, for any $v$,  the solution $u$ of
  \eqref{classic_Volterra1} satisfies
  \begin{equation*}
  \sup_{t \in \RR_+} (1+t)^{\alpha} |u(t)| \lesssim   \sup_{t \in \RR_+} (1+t)^{\alpha} |v(t)|.
  \end{equation*}
\end{theorem}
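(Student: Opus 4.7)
The plan is to embed the problem in a Banach algebra of polynomially decaying kernels and to invert $I + K$ there. I would introduce the weighted space
\[
X_\alpha := \bigl\{ f : \RR_+ \to \CC^n : \|f\|_\alpha := \sup_{t \ge 0}(1+t)^\alpha |f(t)| < \infty \bigr\},
\]
and first verify the elementary algebraic fact that, for $\alpha > 1$, $X_\alpha$ is a Banach algebra for Volterra convolution. This reduces to the pointwise estimate $\int_0^t (1+s)^{-\alpha}(1+t-s)^{-\alpha}\,\dd s \le C_\alpha (1+t)^{-\alpha}$, obtained by splitting at $s = t/2$ and using integrability of $(1+s)^{-\alpha}$. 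Granted the corresponding algebra inequality $\|f \star g\|_\alpha \le C_\alpha \|f\|_\alpha\|g\|_\alpha$, the theorem immediately reduces to showing that the $L^1$-resolvent $R$ produced by \cref{thm:existVolterra} in fact lies in $X_\alpha$: plugging this into the representation $u = v - R \star v$ yields $\|u\|_\alpha \le (1 + C_\alpha \|R\|_\alpha)\|v\|_\alpha$ at once.

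The bulk of the work is therefore the upgrade of $R$ from $L^1$ to $X_\alpha$. To perform it, I would work in the unitization $\mathcal{A}_\alpha := \CC I \oplus X_\alpha$, with $I$ a formal convolution unit, and construct an inverse of $I + K$ in $\mathcal{A}_\alpha$ of the form $I + S$ with $S \in X_\alpha$. The uniqueness statement in \cref{thm:existVolterra} then forces $S = -R$, giving $R \in X_\alpha$. Heuristically, invertibility in $\mathcal{A}_\alpha$ is governed by Gelfand theory: the characters of $\mathcal{A}_\alpha$ are expected to be the Laplace evaluations $\chi_z(\lambda I + f) = \lambda + \cL f(z)$ for $\Re z \ge 0$ together with the character at infinity $\chi_\infty(\lambda I + f) = \lambda$, and the hypothesis \eqref{spectral} is exactly the non-vanishing of $\det(I + K)$ under all these characters.

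The hard part is to make this Gelfand picture rigorous, since $X_\alpha$ is a proper subalgebra of $L^1(\RR_+)$ and a priori may carry additional characters. Rather than invoke a full Wiener--Pitt type analysis, I would follow a tail-splitting perturbation strategy: decompose $K = K^N + K_N$ where $K^N := K\,\ind_{[0,N]}$ is compactly supported and $\|K_N\|_\alpha \to 0$ as $N \to \infty$. For $N$ large enough, a Neumann series in the Banach algebra $X_\alpha$ produces $(I + K_N)^{-1} = I + S_N$ with $S_N \in X_\alpha$, reducing the task to inverting the compactly supported perturbation $I + (I + K_N)^{-1} \star K^N$; the spectral/Laplace-transform analysis of this last factor is directly controlled by the condition \eqref{spectral}. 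Combining these ingredients delivers $R \in X_\alpha$ and closes the proof.
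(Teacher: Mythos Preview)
Your overall plan --- show $R \in X_\alpha$ by working in the unitized Banach algebra and combining a Neumann series with a spectral/Paley--Wiener step --- is natural, but the tail-splitting reduction has a genuine gap.

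The claim ``$\|K_N\|_\alpha \to 0$ as $N\to\infty$'' is \emph{not} a consequence of the hypothesis $|K(t)|\le C_K(1+t)^{-\alpha}$. Take the scalar example $K(t)=(1+t)^{-\alpha}$: then $\|K_N\|_\alpha=\sup_{t\ge N}(1+t)^\alpha(1+t)^{-\alpha}=1$ for every $N$. The space $X_\alpha$ with its sup-type norm behaves like $L^\infty$: compactly supported kernels are not dense, so you cannot peel off a small tail in $\|\cdot\|_\alpha$. This kills the Neumann-series half of your argument, and swapping the order of the factors does not help (you would then need $\|(I-R^N)\star K_N\|_\alpha$ small, which runs into the same obstruction). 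Separately, the remaining factor $I+(I+K_N)^{-1}\star K^N = I + K^N + S_N\star K^N$ is not a ``compactly supported perturbation'': $S_N\star K^N$ has full support and its Laplace transform does not extend past $\Re z=0$, so the vague appeal to ``spectral/Laplace-transform analysis'' does not close either.

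The paper avoids this obstruction by a different mechanism: instead of splitting $K$, it introduces the weight $(1+\epsilon t)^\alpha$ with $\epsilon>0$ small and passes to the \emph{non-convolution} kernel $k(t,s)=\frac{(1+\epsilon t)^\alpha}{(1+\epsilon s)^\alpha}K(t-s)$. Writing $k=k_1+k_2$ with $k_1(t,s)=K(t-s)$, the resolvent of $k_1$ is just the known $L^1$ resolvent $R$ of $K$, and the point is that $\tnorm{k_2}_{\infty,\RR_+}=\sup_t\int_0^t\big|\tfrac{(1+\epsilon t)^\alpha}{(1+\epsilon s)^\alpha}-1\big|\,|K(t-s)|\,\dd s$ can be made arbitrarily small by choosing $\epsilon$ small --- this uses only the integrability of $(1+t)^{-\alpha}$, not any smallness of a tail of $K$ in $X_\alpha$. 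One then concludes via the perturbation result for Volterra kernels of type $L^\infty$ (\cref{thm3.9_Volterra}). In short: the smallness parameter lives in the \emph{weight}, not in the kernel.
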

\begin{remark}\label{rem:FaouRousset}
  Faou, Horsin and Rousset
  \cite[Corollary~3.3]{faou-horsin-rousset-2021-vlasov-hmf} have a
  related result for specific weights. Our proof is different and
  easily applies for general weights, notably weight
  $\ln(2+t) (1+t)^{-\alpha}$, that will be used later.
\end{remark}

Our proof relies on the analysis of Volterra equations of
non-convolution type carried in
\cite[Chapter~9]{gripenberg-londen-staffans-1990-volterra}. These
equations take the form
\begin{equation} \label{Volterra_non_conv}
  \tilde{u}(t) + \int_J k(t,s)\, \tilde{u}(s)\, \dd s = \tilde{v}(t),
  \quad \text{ for almost every $t$ in $J$,}
\end{equation}
where $J$ is a subinterval of $\RR_+$. An important notion developed
there is the one of \emph{Volterra kernel of type $L^p$}, that we
introduce here only for $p=\infty$.
\begin{definition} \label{defi_kernel_Volterra}

  A Volterra kernel on $J$ is a measurable mapping
  $k : J \times J \rightarrow M_n(\CC)$, such that $k(t,s) = 0$ for
  all $s,t \in J$ with $s > t$.
\end{definition}
\begin{definition} \label{defi_type_Lp}

  A Volterra kernel is said to be of type $L^\infty$ on $J$ if it satisfies
  \begin{equation*}
    \tnorm{k}_{\infty,J} < \infty, \quad \text{ where } \quad
    \tnorm{k}_{\infty,J} := \sup_{t \in J} \int_{J} |k(t,s)|\, \dd s.
  \end{equation*}
\end{definition}

On \(J\) we can define a generalized convolution product
\begin{equation*}
  (k_1 \star k_2)(t,s) := \int_{J} k_1(t,u)\, k_2(u,s) \, \dd u
\end{equation*}
and one can directly verify that the set of Volterra kernels of type
$L^\infty$ on $J$, equipped with $(+,\star)$, is a Banach algebra for
the norm $\tnorm{\cdot}_{\infty,J}$. Moreover, one can show that the
space $L^\infty(J, \CC^n)$ is a left Banach module over it, through
$(k v)(t) = \int_{J} k(t,u)\, v(u)\, \dd u$.

As for classical Volterra equations, one has a notion of resolvent:
\begin{definition} \label{defi_resolvent_Volterra}
  Given a  Volterra kernel $k$ on $J$, a resolvent of $k$ on $J$ is
  another Volterra kernel satisfying
  \begin{equation*}
    r + k \star r = r + r \star k = k.
  \end{equation*}
\end{definition}

As in the convolution case, the resolvent determines the solution.
\begin{lemma}[{\cite[Chapter~9,
    Lemma~3.4]{gripenberg-londen-staffans-1990-volterra}}]
  \label{lem3.4_Volterra}

  If $k$ is a Volterra kernel of type $L^\infty$ on $J$, which has a
  resolvent $r$ of type $L^\infty$ on $J$, then, for any
  $\tilde{v} \in L^\infty(J, \CC^n)$, equation
  \eqref{Volterra_non_conv} has a unique solution
  $\tilde{u} \in L^\infty(J, \CC^n)$, given by
  \begin{equation*}
   \tilde{u}(t) = \tilde{v}(t) - \int_{J} r(t,u)\, \tilde{v}(u)\, \dd u.
  \end{equation*}
  In particular, $\|\tilde{u}\|_{L^\infty(J)} \lesssim \|\tilde{v}\|_{L^\infty(J)}$.
\end{lemma}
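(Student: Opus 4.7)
\medskip

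\noindent\textbf{Proof plan.} The strategy is to exploit the Banach algebra structure on Volterra kernels of type $L^\infty$ together with the two resolvent identities $r + k\star r = k$ and $r + r\star k = k$ to write the solution explicitly and then verify existence and uniqueness directly. The natural candidate is
\[
\tilde{u}(t) := \tilde{v}(t) - \int_J r(t,u)\, \tilde{v}(u)\, \dd u.
\]
Because $r$ is of type $L^\infty$, this $\tilde u$ lies in $L^\infty(J,\CC^n)$ with $\|\tilde u\|_{L^\infty(J)} \le (1 + \tnorm{r}_{\infty,J})\, \|\tilde v\|_{L^\infty(J)}$, which is the quantitative bound claimed by the lemma.

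For existence I would substitute this $\tilde u$ into \eqref{Volterra_non_conv}. Denoting the module action by $(k \tilde u)(t) := \int_J k(t,s) \tilde u(s)\, \dd s$, one computes
\[
(k\tilde u)(t) = \int_J k(t,s)\, \tilde v(s)\, \dd s - \int_J \int_J k(t,s)\, r(s,u)\, \tilde v(u)\, \dd u\, \dd s.
\]
Fubini is applicable: the Volterra support constraints restrict the integration to $u \le s \le t$, and the type-$L^\infty$ assumption on $k$ and $r$ together with $\tilde v \in L^\infty$ forces absolute convergence of the iterated integral. Swapping the order of integration rewrites the double integral as $((k \star r)\tilde v)(t)$. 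Substituting the first resolvent identity in the form $k\star r = k - r$ yields $(k\tilde u)(t) = (r \tilde v)(t)$, so $\tilde u + k\tilde u = \tilde v - r\tilde v + r\tilde v = \tilde v$, as required.

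For uniqueness, suppose $\tilde u_0 \in L^\infty(J,\CC^n)$ satisfies the homogeneous equation $\tilde u_0 + k\tilde u_0 = 0$. Applying the kernel $r$ (again, via the module action, with Fubini justified by the $L^\infty$ hypothesis) gives $r\tilde u_0 + (r\star k)\tilde u_0 = 0$. By the second resolvent identity $r\star k = k - r$, this collapses to $k\tilde u_0 = 0$, and plugging back into the homogeneous equation forces $\tilde u_0 = 0$. The main point requiring care is the application of Fubini and the associativity of the generalized convolution acting on $L^\infty(J,\CC^n)$; the definition of type $L^\infty$ is engineered precisely so that this step is painless. No further obstacle is anticipated, the remainder being a routine chase of identities in the Banach algebra.
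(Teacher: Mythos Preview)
Your proof is correct and follows the standard approach: verify directly that $\tilde u = \tilde v - r\tilde v$ solves the equation by substituting and invoking the first resolvent identity $k\star r = k - r$, and establish uniqueness by applying $r$ to the homogeneous equation and using the second identity $r\star k = k - r$. The Fubini justification via the type-$L^\infty$ hypothesis is exactly the right point to flag.

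Note, however, that the paper does not actually prove this lemma: it is quoted from \cite[Chapter~9, Lemma~3.4]{gripenberg-londen-staffans-1990-volterra} and used as a black box. So there is no ``paper's own proof'' to compare against here. Your argument is essentially the one given in that reference, and is entirely adequate.
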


Using the standard von Neumann series for perturbations of the
resolvent map in the Banach algebra, we can control the resolvent
around a known resolvent.
\begin{proposition}[{\cite[Chapter~9, Theorem~3.9]{gripenberg-londen-staffans-1990-volterra}}]
  \label{thm3.9_Volterra}

  If $k = k_1 + k_2$ is the sum of two Volterra kernels of type
  $L^\infty$ on $J$, if $k_1$ has a resolvent $r_1$ of type $L^\infty$
  on $J$, and if
  \begin{equation*}
    \tnorm{k_2}_{\infty,J} < \frac{1}{1 + \tnorm{r_1}_{\infty,J}}
  \end{equation*}
  then $k$ has a resolvent of type $L^\infty$ on $J$.
\end{proposition}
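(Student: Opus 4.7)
The plan is to exploit the Banach algebra structure of $L^\infty$ Volterra kernels on $J$ equipped with the convolution $\star$, and to build a resolvent for $k$ perturbatively around the known resolvent $r_1$ of $k_1$. Viewing the action of a kernel $\kappa$ as the operator $v \mapsto \kappa\star v$ on $L^\infty(J,\CC^n)$, a resolvent of $\kappa$ is precisely a kernel $\rho$ such that $I - \rho\star$ is the two-sided inverse of $I + \kappa\star$; so the whole problem reduces to inverting $I + k\star$ in a manner that keeps the inverse in the $L^\infty$ kernel class.

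The first step is a clean factorization. Since $r_1$ is a resolvent of $k_1$, the operator $I + k_1\star$ is invertible with inverse $I - r_1\star$. Writing $I + k\star = (I + k_1\star) + k_2\star$, one obtains
\begin{equation*}
  I + k\star = (I + k_1\star)(I + \tilde k\star), \qquad \tilde k := k_2 - r_1\star k_2,
\end{equation*}
as is checked using $k_1 = r_1 + k_1\star r_1$. By submultiplicativity of $\tnorm{\cdot}_{\infty,J}$,
\begin{equation*}
  \tnorm{\tilde k}_{\infty,J} \le \bigl(1 + \tnorm{r_1}_{\infty,J}\bigr)\, \tnorm{k_2}_{\infty,J} < 1
\end{equation*}
by the hypothesis on $k_2$. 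Consequently the von Neumann series
\begin{equation*}
  \tilde r := \sum_{n=1}^\infty (-1)^{n+1}\, \tilde k^{\star n}
\end{equation*}
converges geometrically in the Banach algebra, and a telescoping cancellation gives the two-sided identities $\tilde r + \tilde k\star\tilde r = \tilde r + \tilde r\star \tilde k = \tilde k$. Triangular support is preserved by $\star$ and by norm limits, so $\tilde r$ is a bona fide Volterra kernel of type $L^\infty$.

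Composing the two inverses yields $(I + k\star)^{-1} = (I - \tilde r\star)(I - r_1\star) = I - (r_1 + \tilde r - \tilde r\star r_1)\star$, which motivates setting
\begin{equation*}
  r := r_1 + \tilde r - \tilde r \star r_1.
\end{equation*}
This is a Volterra kernel of type $L^\infty$ since the class is stable under $+$ and $\star$. The only remaining point is a direct verification of both resolvent identities $r + k\star r = k$ and $r + r\star k = k$: because $\star$ is non-commutative for matrix-valued kernels, the left and right equations must be checked separately, expanding each by means of the two resolvent identities for $r_1$ and for $\tilde r$ together with the definition $\tilde k = k_2 - r_1\star k_2$. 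I expect this bookkeeping to be the only delicate part of the argument; the underlying normed-algebra manipulations and the geometric convergence of the von Neumann series are routine once the smallness bound on $\tnorm{\tilde k}_{\infty,J}$ is in hand.
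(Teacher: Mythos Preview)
Your proposal is correct and matches the approach the paper indicates: the paper does not actually prove this proposition but simply cites \cite[Chapter~9, Theorem~3.9]{gripenberg-londen-staffans-1990-volterra} and remarks that it follows from ``the standard von Neumann series for perturbations of the resolvent map in the Banach algebra,'' which is precisely what you carry out via the factorization $I+k\star=(I+k_1\star)(I+\tilde k\star)$ with $\tilde k=k_2-r_1\star k_2$. One minor comment: your closing worry about separately checking the left and right resolvent identities is unnecessary, since once the factorization holds and both $r_1$ and $\tilde r$ are two-sided resolvents, the composite $(I-\tilde r\star)(I-r_1\star)$ is automatically a two-sided inverse of $I+k\star$.
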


\begin{proof}[Proof of \cref{thm_decay_Volterra}]
  For \(\epsilon > 0\) (to be chosen later sufficiently small), we  consider
  \begin{equation*}
    \tilde{u}(t) := (1+\epsilon t)^\alpha u(t), \quad
    \tilde{v}(t) := (1+\epsilon t)^\alpha v(t).
  \end{equation*}
  By \eqref{classic_Volterra1}, they satisfy
  \begin{equation}
    \tilde{u}(t)  + \int_{\RR_+}  k(t,s)\, \tilde{u}(s)\, \dd s = \tilde{v}(t), \quad \forall t \in \RR_+
  \end{equation}
  with the Volterra kernel
  \begin{equation*}
    k(t,s) := \frac{(1+\epsilon t)^\alpha}{(1+\epsilon s)^\alpha}
    K(t-s) \quad
    \text{ if $t > s$},
    \quad k(t,s) := 0 \quad \text{otherwise}.
  \end{equation*}
  By \cref{lem3.4_Volterra} it suffices to show that \(k\) is of type
  \(L^\infty\) on $\RR_+$ and has a resolvent of type $L^\infty$ on
  $\RR_+$.  We decompose $k$ as
  \begin{equation*}
    k(t,s) = K(t-s) 1_{s < t}
    + \left(\frac{(1+\epsilon t)^\alpha}{(1+\epsilon s)^\alpha} - 1\right) K(t-s) 1_{s < t}
    =: k_1(t,s) + k_2(t,s).
  \end{equation*}
  Clearly, $\tnorm{k_1}_{\infty,\RR_+} \le \|K\|_{L^1}$ so that $k_1$
  is of type $L^\infty$. By \cref{thm:existVolterra}, under
  \eqref{spectral}, the convolution Volterra kernel \(K\) has a
  resolvent \(R\), solution of \eqref{classic_Volterra2}. Setting
  $$ r_1(t,s) := R(t-s) \quad \text{ if $t > s$}, \quad r_1(t,s) := 0 \quad \text{otherwise},   $$
  yields the resolvent \(r_1\) of $k_1$ on $\RR_+$. Moreover,
  $\tnorm{r_1}_{\infty,\RR_+} \le \|R\|_{L^1}$ so that $r_1$ is of
  type $L^\infty$. By  \cref{thm3.9_Volterra}, it then suffices to show that  we can
  make \(\tnorm{k_2}_{\infty,\RR_+} \) arbitrarily small by choosing \(\epsilon\) small
  enough. For this last claim,  first consider \(t \le \lambda/\epsilon\) for a
  parameter \(\lambda > 0\). Then
  \begin{equation*}
    \int_0^t |k_2(t,s)|\, \dd s
    \lesssim \int_0^t \left( (1+\lambda)^\alpha - 1 \right) \frac{\dd s}{(1+t-s)^\alpha} \le \left( (1+\lambda)^\alpha - 1 \right) \int_{\RR_+} \frac{\dd s'}{(1+s')^\alpha},
  \end{equation*}
  which can be made arbitrary small by choosing \(\lambda\) small
  enough. This $\lambda$ being fixed,  we consider now \(t > \lambda/\epsilon\) and split the
  integral at \(\delta t\) for \(0<\delta<1\). First,
  \begin{equation*}
  \begin{aligned}
    \int_{\delta t}^t |k_2(t,s)|\, \dd s
  &  \lesssim \int_{\delta t}^t
    \left(
      \left(\frac{1+\epsilon t}{1+\epsilon \delta t}\right)^\alpha - 1
    \right)
    \frac{\dd s}{(1+t-s)^\alpha} \\
 &    \lesssim  \left(  \left(\frac{1+\epsilon t}{1+\epsilon \delta t}\right)^\alpha - 1
    \right) \int_{\RR_+} \frac{\dd s'}{(1+s')^\alpha}  \lesssim \sup_{t' \in \RR_+}  \left(\frac{1+t'}{1+\delta t'}\right)^\alpha - 1,
    \end{aligned}
  \end{equation*}
  which can be made arbitrary small by choosing some \(\delta<1\)
  close to \(1\). Then, for fixed \(\lambda,\delta\) we find
  \begin{align*}
    \int_0^{\delta t} |k_2(t,s)|\, \dd s
   & \le \sup_{s \in (0,\delta t)} |K(t-s)|
    \int_0^{\delta t}
    \frac{(1+\epsilon t)^\alpha}{(1+\epsilon s)^\alpha}
    \dd s    \lesssim \frac{(1+\epsilon t)^\alpha}{(1+t)^\alpha} \int_{\RR_+} \frac{\dd s}{(1+\eps s)^\alpha} \\
    & \lesssim   \frac{(1+\epsilon t)^\alpha}{\eps (1+t)^\alpha} \lesssim \sup_{\tau > \frac{\lambda}{\eps}}  \frac{(1+\epsilon \tau)^\alpha}{\eps \tau^\alpha}
     \lesssim \left(\lambda^{-1}+1\right)^\alpha \eps^{\alpha-1},
  \end{align*}
  which again can be made arbitrary small by choosing \(\epsilon\)
  sufficiently small.
\end{proof}

\section{Isotropic suspensions in the inviscid case}
\label{sec:inviscid}
As the incoherent state is rotational invariant, we can always choose
a coordinate system so that \(k \in \bbS^2\) equals the coordinate
vector \(e := (0,0,1)^t\). More precisely, if $\psi_k[\psi^{in}]$ is
the solution of \eqref{LSS-mode} with initial data $\psi^{in}$, one
has for any rotation matrix $R$:
$$ \psi_{Rk}[\psi^{in} (R^{-1} \cdot )](t,p) = \psi_k[\psi^{in}](t,R^{-1} p), $$
so that it is enough to assume $k = e$ to prove \cref{thm1} or
\cref{thm2}. For easier readability we then also drop the explicit
dependence in the index \(k\). The system \eqref{LSS-mode} reduces to
\begin{equation} \label{SS-mode-reduced}
  \partial_t \psi = L_1 \psi + \bar L_2 \cdot  u[\psi]
\end{equation}
where $L_1 := L_{1,e}$, $u[\psi] = u_e[\psi]$,
$\bar L_2 = \bar L_{2,e}$, see definitions
\eqref{def_L1}-\eqref{def_uk}-\eqref{def_L2}. By standard methods, for
any $\psi^{in} \in H^s(\bbS^2)$, $s \ge 0$, there exists a unique
solution
$\psi \in C(\RR_+, H^s(\bbS^2)) \cap C^1(\RR_+, H^{s-1}(\bbS^2))$ if
$\nu = 0$, resp.
$\psi \in C(\RR_+, H^s(\bbS^2)) \cap L^2_{loc}(\RR_+,
H^{s+1}(\bbS^2))$ if $\nu > 0$, of \eqref{SS-mode-reduced}. The point
is to obtain the decay estimates, first for $u$, then for $\psi$
itself. We consider in this section the case $\nu = 0$.

\subsection{Volterra equation on $u$}

As indicated in \cref{sec:intro:key-ideas}, we can rewrite the
evolution for $u$ as a Volterra equation
$$ u(t) + K \star u(t) = U(t) $$
with
 \begin{equation} \label{UK}
  U(t) := u[\e^{tL_1} \psi_e^{in}], \quad   K(t) v := - u[\e^{tL_1} (\bar L_2 \cdot v)], \quad v \in \CC^3.
\end{equation}
As $\nu=0$, by the definition of \(u[\cdot]\), we find explicitly for
a test function \(\phi\) that
\begin{equation}
  \label{eq:inviscid-u}
  u[\e^{tL_1} \phi]
  = i \eps \int_{\bbS^2} \bbP_{e^\perp} p\, (e \cdot p)
  (\e^{tL_1} \phi)(p)\, \dd p
  = i \eps \int_{\bbS^2} \bbP_{e^\perp} p\, (e \cdot p)
  \phi(p)\, \e^{-ie \cdot pt} \dd p.
\end{equation}
We recognize a Fourier transform over the sphere, with well-known decay properties, quantified in
\begin{lemma} \label{lemma_decay_Fourier}

  Let \(M \in \NN\), \(\delta > 0\) and $F\in H^{2M+1+\delta}(\bbS^2)$
  be a function over $\bbS^2$. For a unit vector \(e \in \bbS^2\)
  define the integral
  \begin{equation*}
    I(t) = \int_{\bbS^2} \e^{i  \, e \cdot p \, t} F(p) \, \dd p, \quad t \ge 0.
  \end{equation*}
  Then there exist complex numbers $c_{m,\pm}$, $1 \le m \le M$, and
  $I_M= I_M(t)$ such that
  \begin{align}
    & c_{m,\pm} \in
      \operatorname{span}\Big(\big\{ \partial^\beta F(\pm e), \quad |\beta| \le 2m-2\big\}\Big) \\
    & \forall t \ge 0, \quad |I_M(t)| \le \frac{C}{(1+ t)^{M+1}}  \|F\|_{H^{2M+1+\delta}(\bbS^2)}
  \end{align}
  and
  \begin{align}
    I(t) = \sum_{m=1}^{M} \big( c_{m,+} \e^{i t} + c_{m,-} \e^{-it} \big) (1+t)^{-m} + I_M(t).
  \end{align}
\end{lemma}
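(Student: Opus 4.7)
The plan is to reduce the two-dimensional integral to a one-dimensional oscillatory integral on $[-1,1]$ and then use iterated integration by parts in the height variable $u = e\cdot p$. After a rotation I can assume $e = (0,0,1)^t$. Using spherical coordinates $p = (\sin\theta\cos\varphi,\sin\theta\sin\varphi,\cos\theta)$ and substituting $u = \cos\theta$, the surface measure becomes $\dd p = \dd u\, \dd\varphi$, and so
\begin{equation*}
I(t) = \int_{-1}^{1} \e^{itu}\, G(u)\, \dd u, \qquad G(u) := \int_0^{2\pi} F(\arccos(u),\varphi)\,\dd\varphi.
\end{equation*}
The only critical points of the phase $p\mapsto e\cdot p$ on $\bbS^2$ are the poles $\pm e$, corresponding to $u = \pm 1$, which is why the expansion has exactly two oscillatory modes $\e^{\pm it}$.

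The phase $u$ has no interior critical point on $[-1,1]$, so $M$ iterated integrations by parts produce
\begin{equation*}
I(t) = \sum_{m=1}^{M} \frac{(-1)^{m-1}}{(it)^m}\Bigl(\e^{it}G^{(m-1)}(1) - \e^{-it}G^{(m-1)}(-1)\Bigr) + \frac{(-1)^M}{(it)^M}\int_{-1}^1 \e^{itu}G^{(M)}(u)\,\dd u,
\end{equation*}
which already has the structure announced in the lemma, with $c_{m,\pm}$ proportional to $G^{(m-1)}(\pm 1)$. To identify these boundary values, work near the north pole in Cartesian coordinates $(x,y)$ with $p = (x,y,\sqrt{1-x^2-y^2})$, so $u = \sqrt{1-x^2-y^2}$. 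Taylor-expand $F$ at $e$ and integrate out $\varphi$: the $\varphi$-average annihilates every term in which $x$ or $y$ appears to odd power, and the surviving monomials $x^{2a'}y^{2b'}(z-1)^c$ depend on $u$ only through $r^2 = 1-u^2$ and $(1-u)$, hence extend to smooth functions of $u$ near $u = 1$. Reading off the Taylor coefficient of $(1-u)^{m-1}$ then gives an explicit linear combination of $\partial^\beta F(e)$ with $|\beta| = 2a'+2b'+c \le 2(m-1)$; the doubling is the square-root geometry of $u = \sqrt{1-r^2}$ at the pole. The same computation works at $-e$.

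The remaining task is to bound the final integral by $C(1+t)^{-(M+1)}\|F\|_{H^{2M+1+\delta}}$. One additional integration by parts produces boundary contributions $\e^{\pm it}G^{(M)}(\pm 1)/(it)$, of order $1/t$, with coefficients controlled in terms of derivatives of $F$ at $\pm e$ of order $\le 2M$ and hence by $\|F\|_{C^{2M}}\lesssim\|F\|_{H^{2M+1+\delta}(\bbS^2)}$ through the Sobolev embedding $H^{2M+1+\delta}(\bbS^2)\hookrightarrow C^{2M}$; these contributions can be absorbed into $I_M(t)$. The new integral carries $G^{(M+1)}$, whose behaviour at the poles again requires the doubling of derivative count. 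I expect this endpoint bookkeeping to be the main technical obstacle: the interior regularity of $G$ is easy, but to match the borderline assumption $H^{2M+1+\delta}$ one needs either a fractional/van der Corput estimate on $G^{(M)}$, or a careful splitting that separates the contribution of neighbourhoods of $\pm e$ (treated by stationary phase, giving the doubled order) from the non-stationary region (where standard Sobolev embedding in two dimensions and further integration by parts give arbitrary decay). Everything else is a routine, if notation-heavy, computation.
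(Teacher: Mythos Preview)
Your reduction to the one-dimensional integral $I(t)=\int_{-1}^1 e^{itu}G(u)\,\dd u$ and the iterated integration by parts are correct, and your identification of $G^{(m-1)}(\pm 1)$ with combinations of $\partial^\beta F(\pm e)$, $|\beta|\le 2m-2$, via the square-root geometry is right. The paper, however, organises the argument in the opposite order: it introduces a partition of unity $\chi_0,\chi_\pm$ \emph{before} any integration by parts. The interior piece $I_0$, with $F\chi_0$ supported away from the poles, reduces to a 1D Fourier transform of a compactly supported function and contributes only to the remainder $I_M$ (there are no boundary terms). The expansion comes entirely from the pole pieces $I_\pm$, which the paper treats by genuine two-dimensional stationary phase rather than through $G$.

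The point you label ``the main technical obstacle'' is indeed the heart of the matter, and it is not routine. After your $(M{+}1)$-st integration by parts the integrand $G^{(M+1)}$ would require control of $2M+2$ derivatives of $F$ near the poles, which is one more than the hypothesis gives; splitting at that stage and invoking stationary phase on the near-endpoint part really means going back to the original 2D integral over a polar cap and redoing the computation there from scratch. The paper's execution of that step is worth knowing: near $+e$ it parametrises by $x\in\RR^2$, uses the Morse lemma to write the phase as $1-|y|^2$, and then applies Plancherel to obtain
\[
\e^{it}\int_{\RR^2} \e^{-i|y|^2 t} F_+(y)\,\dd y
= \frac{\e^{it-i\pi/4}}{t}\int_{\RR^2} \e^{-i|\xi|^2/t}\,\hat F_+(\xi)\,\dd\xi.
\]
A Taylor expansion of $\e^{-i|\xi|^2/t}$ to order $M-1$ then gives the coefficients $c_{m,+}$ as multiples of $[(-\Delta)^{m-1}F_+](0)$ and a remainder controlled by $\|\,|\xi|^{2M}\hat F_+\,\|_{L^1(\RR^2)}\lesssim\|F_+\|_{H^{2M+1+\delta}}$. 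This last inequality is precisely where the regularity hypothesis is sharp, and it replaces the endpoint analysis of $G^{(M+1)}$ that your route would require. In short, your global-IBP organisation is a legitimate alternative for generating the expansion, but for the remainder you are led back to the same 2D stationary-phase computation the paper performs up front.
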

For completeness, we shall provide the proof of this lemma, directly
inspired from the lecture notes~\cite{Choi}, at the end of this
subsection. In the special case of our source $U$ and kernel $K$, the
integrand $F$ of the lemma contains the projection \(\bbP_{k^\perp}\),
that is vanishing at \(p = \pm k\) so that \cref{lemma_decay_Fourier}
with \(M=1\) implies the bounds
\begin{align*}
  |U(t)| &\lesssim (1+t)^{-2} \| \psi^{in} \|_{H^{3+\delta}}, \\
  |K(t)| &\lesssim (1+t)^{-2}.
\end{align*}
Hence we find by \cref{thm_decay_Volterra} that under the spectral
condition
\begin{equation}
  \label{eq:spectral-stab}
 \det (I + \cL K(\lambda)) \neq 0 \quad \text{ for all } \: \Re \lambda \ge 0,
\end{equation}
the first bound \eqref{eq:inviscid-u-decay} on the decay of \(u\)
holds. To complete the proof of \cref{thm1}, it remains to
check when assumption \eqref{eq:spectral-stab} is satisfied, and
finally to analyse the decay of $\psi$ itself. This will be done in
the following Subsections \ref{sec:inviscid:stability-condition} and
\ref{sec:inviscid:control-psi}.

\begin{proof}[Proof of \cref{lemma_decay_Fourier}]

  We introduce a smooth partition of unity $\chi_0$, $\chi_\pm$, with
  $\chi_0 + \chi_+ + \chi_- = 1$ over $\bbS^2$, where $\chi_0$ is
  supported away from $p = \pm e$ and $\chi_\pm$ is supported in a
  neighborhood of $\pm e$. We decompose
  \begin{equation*}
    I(t) =  I_0(t) + I_+(t) + I_-(t),
    \quad I_l(t) = \int_{\bbS^2} F(p) \chi_l(p)  \, \dd p,
    \quad l \in \{0,+,-\}.
  \end{equation*}

  By the rotational symmetry, one can introduce coordinates such that
  \(e\) is along the \(z\)-axis. Then any point \(p \in \bbS^2\) can
  be parametrized as
  \begin{equation}
    \label{eq:gamma-z-param}
    p = p(\gamma,z)
    = (\sqrt{1-z^2}\, \sin \gamma, \sqrt{1-z^2}\, \cos \gamma, z)^T
  \end{equation}
  for \(z \in [-1,1]\) and \(\gamma \in [0,2\pi)\). The surface
  measure on the sphere is $\dd \sigma = \dd \gamma\, \dd z$ so that
  \begin{align}
    I_0(t) = \int_{-1}^1 \e^{-izt}  F_0(z) \,\dd z, \quad F_0(z)
    = \int_{0}^{2\pi} (F \chi_0)(p(\gamma,z)) \, \dd \gamma.
  \end{align}
  The key point is that $F_0$ is compactly supported in $(-1,1)$ so that
  its extension by zero to $\RR$, still denoted $F_0$, is in
  $H^{2M+2}_c(\RR)$. We find in particular that
  \begin{align}
    \forall \,  t\ge 1, \quad |t^{M+1} I_0(t)|
    = |t^{M+1} \hat{F_0}(t)| \le \|F_0\|_{W^{M+1,1}(\RR)} \le
    \|F_0\|_{W^{M+1,1}(\bbS^2)}
    \lesssim \|F\|_{H^{2M+1+\delta}(\bbS^2)}.
  \end{align}

  It remains to treat $I_+$  ($I_-$ can be handled in the same way).  Let
  \begin{equation*}
    \varphi : B(0,1) \subset \RR^2 \rightarrow \bbS^2, \quad x \rightarrow (x,\sqrt{1-|x|^2}).
  \end{equation*}
  We find
  \begin{equation*}
    I_+(t) = \int_{B(0,1)} \e^{i \sqrt{1-|x|^2} t}
    \left( F_0 \, \chi_+  \, j  \right)(\varphi(x)) \, \dd  x
  \end{equation*}
  where $j$ is the Jacobian from the change of variable.

  The key point is that the phase $\sqrt{1-|x|^2}$ has a non-degenerate
  critical point at $x = 0$, with Hessian matrix at zero being
  $-2 I$. By Morse lemma, there exists a smooth diffeomorphism $\psi$
  from a neighborhood $U$ of $0$ in $\RR^2$ to $B(0,\eta)$, for $\eta$
  small enough, so that $\sqrt{1-|\psi(y)|^2} = 1-|y|^2$. By taking
  the support of \(\chi_+\) sufficiently small, we can perform another
  change of variables and arrive at
  \begin{equation*}
    I_+(t) = \e^{i t}\int_{U} \e^{- i |y|^2 t}  F_+(y) \, \dd y
  \end{equation*}
  where $F_+$ is the product of $F_0 \circ \varphi \circ \psi$ with a
  smooth function, compactly supported in $U$ coming from the
  Jacobian. Extending $F_+$ by zero outside $U$, we end up with
  \begin{equation*}
    I_+(t) = \e^{i t} \int_{\RR^2}  \e^{- i |y|^2 t}  F_+(y) \, \dd y
    = \frac{\e^{it-i\frac{\pi}{4}}}{t} \int_{\RR^2} \e^{- i \frac{|\xi|^2}{t}} \hat{F}_+(\xi)\, \dd\xi
  \end{equation*}
  with the Fourier transform \(\hat{F}_+\) of \(F_+\) and where the last
  line comes from Plancherel identity.

  We can then perform a Taylor expansion
  \begin{equation*}
    \e^{- i \frac{|\xi|^2}{t}}
    = \sum_{m=1}^M \frac{1}{(m-1)!}\left(- i \frac{|\xi|^2}{t}\right)^{m-1}  + O\Big( \frac{|\xi|^{2M}}{t^M} \Big).
  \end{equation*}
  Setting
  \begin{equation*}
    c_{m,+} = \e^{-i\frac{\pi}{4}} \frac{1}{(m-1)!} (- i)^{m-1}
    \int_{\RR^2} |\xi|^{2m-2} \hat{F}_+(\xi)\, \dd \xi
    =  2\pi \e^{-i\frac{\pi}{4}} \frac{1}{(m-1)!} (- i)^{m-1} [(-\Delta)^{m-1} F_+](0)
  \end{equation*}
  we obtain that
  \begin{equation*}
    I_+(t) = \sum_{m=1}^M \frac{c_{m,+} \e^{it}}{t^m} + I_{M,+}(t),
    \quad |I_{M,+}(t)| \le \frac{C}{t^{M+1}} \| |\xi|^{2M} \hat{F}_+ \|_{L^1}.
  \end{equation*}
  One can then notice that
  \begin{equation*}
    \begin{aligned}
      \| |\xi|^{2M} \hat{F}_+ \|_{L^1(\RR^2)}
      &\le \| (1{+}|\xi|)^{-1-\delta}\|_{L^2(\RR^2)}
        \| (1{+}|\xi|)^{2M+1+\delta} \hat{F}_+ \|_{L^2(\RR^2)} \\
      &\lesssim \|  F_+ \|_{H^{2M+1+\delta}(\RR^2)} \lesssim \|F\|_{H^{2M+1+\delta}(\bbS^2)}.
    \end{aligned}
  \end{equation*}
  The result of the lemma follows directly.
\end{proof}

\subsection{Stability condition}
\label{sec:inviscid:stability-condition}

We now study the stability condition \eqref{eq:spectral-stab}. This
condition was already studied through explicit numerical computations
in \cite{saintillan-shelley-2008-instabilities}, but here we provide
another angle through the argument principle which allows a complete
solution.

We first compute the determinant.
\begin{lemma}
  \label{thm:determinant}
  For \(\lambda \in \CC\) with \(\Re \lambda \ge 0\) we have
  \begin{align}
    \label{eq:spectral-boundary-outside}
    \det(1 + \cL K(\lambda))
    &= \left(
      1 + \frac{3\Gamma \eps}{4}
      \int_{-1}^{+1} \frac{z^2(1-z^2)}{\lambda+iz}\, \dd z
      \right)^2
    &
    & \text{if } \lambda \not \in i(-1,1).\\
    \det(1 + \cL K(\lambda))
    &= \left(
      1 +
      \frac{3\Gamma\eps\pi}{4} b^2(1{-}b^2)
      + \operatorname{PV}
      \frac{3\Gamma \eps}{4i}
      \int_{-1}^{+1} \frac{z^2(1{-}z^2)}{b+z}\, \dd z
      \right)^2
    &
    & \text{if } \lambda = i b \text{ for } b \in (-1,1).
  \end{align}
\end{lemma}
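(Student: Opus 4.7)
The plan is to compute $\cL K(\lambda)$ explicitly as a $3 \times 3$ matrix, exploit the axial symmetry about $e$ to reduce it to a diagonal form, and then take its determinant. Plugging $\phi = \bar L_2 \cdot v$ into the formula \eqref{eq:inviscid-u} and using the definitions of $u[\cdot]$ and $\bar L_2$ with $k = e$, one obtains the closed form
\begin{equation*}
K(t)v = \frac{3\eps\Gamma}{4\pi} \int_{\bbS^2} \e^{-i(e\cdot p)t}\,(e\cdot p)^2\, \bigl(\mathbb{P}_{e^\perp}p \otimes \mathbb{P}_{e^\perp}p\bigr)\, v\, \dd p,
\end{equation*}
which is in particular $O((1+t)^{-2})$ by \cref{lemma_decay_Fourier}.

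For $\Re\lambda > 0$, Fubini together with the identity $\int_0^\infty \e^{-(\lambda+iz)t}\, \dd t = 1/(\lambda+iz)$ yields
\begin{equation*}
\cL K(\lambda) = \frac{3\eps\Gamma}{4\pi} \int_{\bbS^2} \frac{(e\cdot p)^2}{\lambda + i(e\cdot p)}\, \mathbb{P}_{e^\perp}p \otimes \mathbb{P}_{e^\perp}p\, \dd p.
\end{equation*}
Parametrizing $p$ by $(\gamma,z)$ as in \eqref{eq:gamma-z-param}, the $\gamma$-integration kills the off-diagonal entries of $\mathbb{P}_{e^\perp}p \otimes \mathbb{P}_{e^\perp}p$ (since $\int_0^{2\pi}\sin\gamma\cos\gamma\, \dd\gamma = 0$) and averages the surviving diagonal entries to $\pi(1-z^2)$, while the $(3,3)$ entry is already zero. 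Hence $\cL K(\lambda) = A(\lambda)\, \mathrm{diag}(1,1,0)$ with $A(\lambda) = \tfrac{3\eps\Gamma}{4}\int_{-1}^{1}\tfrac{z^2(1-z^2)}{\lambda + iz}\, \dd z$, giving $\det(I + \cL K(\lambda)) = (1 + A(\lambda))^2$. This settles the first identity when $\Re\lambda > 0$, and extends to all $\Re\lambda \ge 0$ with $\lambda \notin i[-1,1]$ by analytic continuation, noting that at the endpoints $\lambda = \pm i$ the factor $1-z^2$ in the numerator cancels the would-be singularity.

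The only delicate step is the boundary case $\lambda = ib$ with $b \in (-1,1)$, where the denominator vanishes on the integration contour. Writing $\lambda = \epsilon + ib$ and letting $\epsilon \to 0^+$, I would invoke the Plemelj-Sokhotski formula
\begin{equation*}
\frac{1}{\epsilon + i(b+z)} \longrightarrow \pi\, \delta(b+z) - i\, \operatorname{PV}\frac{1}{b+z},
\end{equation*}
tested against the smooth weight $z^2(1-z^2)$, which vanishes at $z = \pm 1$ so that the resulting principal-value integral is absolutely convergent. Since $-i = 1/i$, this produces
\begin{equation*}
A(ib) = \frac{3\eps\Gamma\pi}{4}\, b^2(1-b^2) + \frac{3\eps\Gamma}{4i}\, \operatorname{PV}\int_{-1}^{1}\frac{z^2(1-z^2)}{b+z}\, \dd z,
\end{equation*}
and squaring gives the claimed second identity. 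The passage to the limit at the level of $\cL K$ itself is justified by dominated convergence using the bound $|K(t)| \lesssim (1+t)^{-2}$, which ensures that $\cL K(\epsilon + ib) \to \cL K(ib)$ as $\epsilon \to 0^+$.
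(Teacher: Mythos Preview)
Your proof is correct and follows essentially the same route as the paper: compute $\cL K(\lambda)$ via the parametrization \eqref{eq:gamma-z-param}, observe that the $\gamma$-integration diagonalizes the matrix to $A(\lambda)\,\mathrm{diag}(1,1,0)$, and handle the boundary $\lambda = ib$ by continuity and the Plemelj formula. Your write-up is in fact slightly more detailed than the paper's (you spell out the dominated-convergence justification and the cancellation at $\lambda = \pm i$), but the argument is the same.
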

\begin{proof}
  We remind that
  \begin{equation*}
    \bar L_2 = \frac{3\Gamma i}{4\pi} (p\cdot e) p
  \end{equation*}
  We again take the parametrization \eqref{eq:gamma-z-param}. We then
  deduce from \eqref{UK} and \eqref{eq:inviscid-u} for
  \(\Re \lambda > 0\) that
  \begin{equation*}
    \cL K(\lambda)
    = \frac{3\Gamma\eps}{4\pi}
    \int_{z=-1}^{1} \int_{\gamma=0}^{2\pi}
    \frac{z^2(1-z^2)}{\lambda + iz}
    \begin{pmatrix}
      \sin^2 \gamma & \sin \gamma \cos \gamma & 0 \\
      \sin \gamma \cos \gamma & \cos^2 \gamma & 0 \\
      0 & 0 & 0
    \end{pmatrix}
    \dd \gamma\, \dd z.
  \end{equation*}

  This directly yields the expression for \(\Re \lambda > 0\). For
  \(\Re \lambda =0\) the result follows from continuity and Plemelj
  formula.
\end{proof}

\begin{remark} \label{rem_PV}

  The principal value can be computed explicitly: for all
  $b \in (-1,1)$,
  \begin{align} \label{exp_PV}
    \operatorname{PV} \int_{-1}^{1} \frac{z^2 (1-z^2)}{b + z} \dd z
    = 2 b^3 - \frac43 b + (b^4 -  b^2) \ln \frac{1-b}{1+b}.
  \end{align}
  This is an odd function, that vanishes at $0$ and at $\pm b_c$ where
  $b_c > 0$ can be evaluated numerically to
  $$b_c \approx 0.62375.$$
  Moreover, the function is negative on $(0,b_c)$, positive on
  $(b_c,1)$.
\end{remark}

\begin{proposition}\label{thm:spectral-condition}

  For the inviscid system, one has depending on \(\eps\)
  \begin{enumerate}
  \item If \(\eps = 1\), then the spectral condition
    \eqref{eq:spectral-stab} is satisfied for all \(\Gamma \in \RR_+\).
  \item If \(\eps = -1\), the spectral
    condition is satisfied if and only if \(\Gamma < \Gamma_c\), where
    \begin{equation*}
      \frac{1}{\Gamma_c} = \frac{3\pi}{4} b_c^2(1-b_c^2).
    \end{equation*}
  \end{enumerate}
\end{proposition}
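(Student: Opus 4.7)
The plan is a Penrose-style argument based on the argument principle. By \cref{thm:determinant}, introduce the scalar function
\[
g(\lambda) := \int_{-1}^{1} \frac{z^{2}(1-z^{2})}{\lambda + iz}\, \dd z,
\]
so that \(\det(I + \cL K(\lambda)) = \bigl(1 + \tfrac{3\Gamma\eps}{4} g(\lambda)\bigr)^{2}\) and the spectral condition \eqref{eq:spectral-stab} is equivalent to \(g(\lambda) \neq -4/(3\Gamma\eps)\) for every \(\Re\lambda \ge 0\). I will treat the imaginary axis by direct inspection and the open right half-plane by a winding-number computation for the closed image curve \(b \mapsto g(ib)\), closed up at infinity since \(g(\lambda) = O(1/|\lambda|)\).

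The first step is to describe this boundary curve. For \(|b|>1\), \cref{thm:determinant} gives \(g(ib)\) purely imaginary. For \(b\in(-1,1)\), combining \cref{thm:determinant} with the explicit expression in \cref{rem_PV} yields
\[
\Re g(ib) = \pi b^{2}(1-b^{2}), \qquad
\Im g(ib) = -\bigl(2b^{3}-\tfrac{4}{3}b+(b^{4}-b^{2})\ln\tfrac{1-b}{1+b}\bigr).
\]
Thus \(\Re g(ib)\ge 0\) on the whole imaginary axis, and on \([-1,1]\) the imaginary part vanishes only at \(b\in\{0,\pm b_{c}\}\), where \(g\) takes the values \(0\) and \(c:=\pi b_{c}^{2}(1-b_{c}^{2})\) respectively. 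Using the symmetry \(g(-\bar\lambda)=\overline{g(\lambda)}\) together with the sign data of \cref{rem_PV} (the odd function is negative on \((0,b_{c})\), positive on \((b_{c},1)\)), a quadrant inspection shows that as \(b\) decreases through \(\pm b_{c}\)—the positive orientation of \(\partial\{\Re\lambda>0\}\)—the imaginary part of \(g(ib)\) switches from negative to positive, so both real-axis crossings at \(c\) are upward, while the only other real-axis touch of the image curve is at \(0\).

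For pullers (\(\eps=+1\)) the target \(-4/(3\Gamma)\) is negative, and the elementary computation
\[
\Re g(a+ib) = a\int_{-1}^{1} \frac{z^{2}(1-z^{2})}{a^{2}+(b+z)^{2}}\,\dd z > 0, \qquad a>0,
\]
prevents any interior zero; the boundary bound \(\Re\bigl(1+\tfrac{3\Gamma}{4}g(ib)\bigr)\ge 1\) closes the case. For pushers (\(\eps=-1\)) the target \(4/(3\Gamma)\) is positive, and the winding number of the closed image curve around it is computed by signed intersections with the horizontal ray \(\{w>4/(3\Gamma)\}\): when \(\Gamma<\Gamma_{c}\) the target lies to the right of \(c\) and there are no crossings (winding \(0\)), while if \(\Gamma>\Gamma_{c}\) the two upward crossings through \(c\) give winding \(+2\) and hence genuine zeros in the open right half-plane. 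In the subcritical regime, the boundary verification \(\Re\bigl(1-\tfrac{3\Gamma}{4}g(ib)\bigr)=1-\tfrac{3\Gamma\pi}{4}b^{2}(1-b^{2}) > 1-\Gamma/\Gamma_{c} > 0\) evaluated at the only points \(b\in\{0,\pm b_{c}\}\) where the imaginary part vanishes, excludes boundary zeros; at the critical value \(\Gamma=\Gamma_{c}\) this expression vanishes at \(b=\pm b_{c}\) and \(\det\) picks up zeros at \(\lambda=\pm ib_{c}\). The main care point will be the double-point structure of the image curve at \(0\) and \(c\) and the harmless contribution of the semicircle at \(|\lambda|=R\to\infty\), which by \(g(\lambda)\sim (4/15)/\lambda\) contracts to a small arc near \(0\) and does not disturb either winding computation.
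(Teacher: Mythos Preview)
Your proof is correct and follows essentially the same Penrose-style argument as the paper: compute the boundary curve \(b\mapsto g(ib)\), observe that it lies in \(\{\Re\ge 0\}\), and decide by the argument principle whether the target value \(-4/(3\Gamma\eps)\) is encircled. The only notable variation is that for pullers you bypass the winding argument via the direct computation \(\Re g(a+ib)=a\int_{-1}^{1} z^{2}(1-z^{2})/(a^{2}+(b+z)^{2})\,\dd z>0\), which is a slight simplification; for pushers your explicit signed-crossing count and separate boundary check at \(b\in\{0,\pm b_{c}\}\) make precise what the paper leaves to the figure.
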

\begin{proof}
  For \(\Re \lambda \ge 0\) define the analytic function \(F(\lambda)\) by
  \begin{equation*}
    \begin{aligned}
      F(a+ib)
      &= \frac{3i}{4}
        \int_{z=-1}^{+1} \frac{z^2(1-z^2)}{b+z}\, \dd z
      & &\text{if } (a,b) \not \in \{0\} \times (-1,1), \\
      F(ib)
      &=
        - \frac{3\pi}{4} b^2 (1-b^2) +
        \frac{3i}{4}
        \operatorname{PV} \int_{z=-1}^{+1} \frac{z^2(1-z^2)}{b+z}\, \dd z
      & &\text{if } b \in (-1,1).
    \end{aligned}
  \end{equation*}

  By \cref{thm:determinant}, there exists then an eigenmode if and
  only if \(F\) attains in the right half plane the value~\(\eps/\Gamma\).

  By the explicit expression, we also see that \(F(\lambda) \to 0\) as
  \(|\lambda| \to \infty\). Hence by the argument principle the
  attained values are exactly those values encircled by the curve
  \(b \mapsto F(ib)\). This curve is plotted in
  \cref{fig:curve-spectral} but can also be understood analytically.

  \begin{figure}[tb]
    \centering
    \begin{tikzpicture}
      \begin{axis}[
        xlabel={\(\Re F\)},
        ylabel={\(\Im F\)}
        ]
        \addplot [very thick,green,fill=green!20!white]
        table {curve.txt};
      \end{axis}
    \end{tikzpicture}
    \caption{Encircled area of the curve \(b \mapsto F(b)\) for the
      spectral condition in \cref{thm:spectral-condition}.}
    \label{fig:curve-spectral}
  \end{figure}
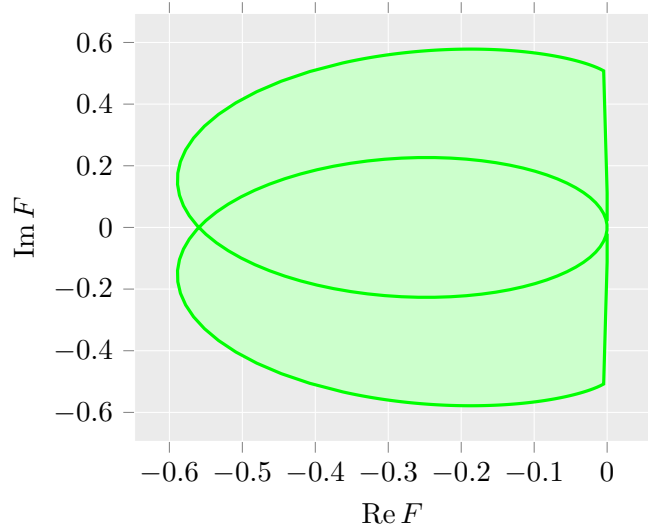

  For \(|b| \ge 1\), the expression directly shows that \(F(b)\) is
  purely imaginary, while for \(|b| \le 1\) we find that
  \(\Re F \le 0\). Hence it cannot encircle positive real numbers,
  which proves the first statement of the proposition.

  In \( \{|b| < 1\}\) the curve crosses the real axis at \(b=0\) and
  \(b=\pm b_c\). By the expression of \(F\), we see that it indeed
  crosses the real axis at \(-3\pi b_c^2(1-b_c^2)/4\). Hence we then
  have an eigenmode if and only if
  \begin{equation*}
    \frac{1}{\Gamma} \le \frac{3\pi}{4} b_c^2(1-b_c^2),
  \end{equation*}
  which shows the claimed stability.
\end{proof}

\subsection{Control on $\psi$} \label{subsec:control_psi}
\label{sec:inviscid:control-psi}

With the control of \(u\) we go back to prove
\eqref{eq:inviscid-psi-decay}. Again, it is enough to treat the case
$k = e$. Let $\varphi \in H^{1+\delta}(\bbS^2)$. From
\eqref{eq:duhamel-psi}, we deduce
\begin{equation*}
  \int_{\bbS^2} \psi(t,p) \varphi(p)\, \dd p
  = \int_{\bbS^2} \e^{i t e \cdot p}  \psi^{in}(p)  \varphi(p) \, \dd
  p
  +
  \int_0^t \int_{\bbS^2} \e^{i (t-s) e \cdot p} \varphi(p) \bar L_2(p)
  \cdot u(s) \, \dd p\, \dd s =: I_1(t) + I_2(t).
\end{equation*}
We apply \cref{lemma_decay_Fourier} with $M=0$, to obtain
\begin{equation*}
  |I_1(t)| \le \frac{C}{(1+t)} \| \psi^{in}  \varphi \|_{H^{1+\delta}}
  \lesssim \frac{1}{1+t}  \| \psi^{in} \|_{H^{1+\delta}}  \| \varphi \|_{H^{1+\delta}},
\end{equation*}
as $H^{1+\delta}(\bbS^2)$ is an algebra. As regards $I_2$, we write
\begin{equation*}
  \begin{aligned}
    I_2(t)
    &= \int_0^{t/2} \Big( \int_{\bbS^2} \e^{i (t-s) e \cdot p}
      \varphi(p)  \bar L_2(p) \dd p \Big) \cdot u(s)\, \dd s + \int_{t/2}^t
      \int_{\bbS^2} \e^{i (t-s) e \cdot p} \varphi(p)   \bar L_2(p) \cdot
      u(s) \, \dd p\, \dd s \\
    &=: I_{2,1}(t) + I_{2,2}(t).
  \end{aligned}
\end{equation*}
We bound the parenthesis in the first term using again
\cref{lemma_decay_Fourier} with $M=0$: it follows
\begin{align*}
  |I_{2,1}(t)| & \lesssim  \int_0^{t/2}  \frac{1}{1+(t-s)} \|  \varphi
                 \bar L_2 \|_{H^{1+\delta}} |u(s)|\, \dd s \\
               & \lesssim  \frac{1}{1+t}  \|\varphi\|_{H^{1+\delta}}
                 \int_0^{t/2}  \frac{1}{(1+s)^2}\, \dd s\, \|\psi^{in}\|_{H^{3+\delta}} \\
               &  \lesssim  \frac{1}{1+t}  \|\varphi\|_{H^{1+\delta}}   \|\psi^{in}\|_{H^{3+\delta}},
\end{align*}
where we have used \eqref{eq:inviscid-u-decay} for the second
inequality. Eventually, we use again \eqref{eq:inviscid-u-decay} to
get
\begin{align*}
  |I_{2,2}(t)| & \lesssim  \int_{t/2}^t   \frac{1}{(1+s)^2}\, \dd s\,
                 \|\psi^{in}\|_{H^{3+\delta}} \| \varphi \bar L_2\|_{L^\infty} \\
               & \lesssim \frac{1}{1+t}  \|\varphi\|_{H^{1+\delta}} \|\psi^{in}\|_{H^{3+\delta}},
\end{align*}
where we applied the Sobolev imbedding
$H^{1+\delta}(\bbS^2) \hookrightarrow L^\infty(\bbS^2)$. This
concludes the proof of \eqref{eq:inviscid-psi-decay} and of
\cref{thm1}.

\section{Decay estimates for advection-diffusion on the sphere}\label{sec:diffusive}

In this central section of our paper we prepare for the results under
small angular diffusion \(\nu\) by proving \cref{prop:hypocoercf} and
\cref{thm:vector-field-decay}.

Here we study mixing estimates for the semigroup $\e^{tL_{1,k}}$ where
$L_{1,k} = - i k \cdot p + \nu \Delta$, $k \in \bbS^2$. As explained
at the beginning of \cref{sec:inviscid}, there is no loss of generality in
assuming \(k = e = (0,0,1)\). We shall later use spherical coordinates
$(\theta,\varphi)$, with colatitude $\theta \in (0, \pi)$ and longitude
$\varphi \in (0,2\pi)$, so that
$p = \sin \theta \cos \varphi\, e_x + \sin \theta \sin \varphi\, e_y +
\cos\theta\, e$.

Setting \(\psi(t) = \e^{t L_{1,k}} \psi^{in}\), we thus study decay
estimates for solutions to
\begin{align} \label{eq:ADEfourier}
 \partial_t \psi+ i p \cdot e\, \psi = \nu \Delta \psi , \qquad p\in \bbS^2,
\end{align}
where, from now on, we suppress the subscript on all differential
operators, as it is understood that they are all with respect to the
variable $p$ on the sphere \(\bbS^2\).

A first feature of \eqref{eq:ADEfourier} is that the interaction of
transport and diffusion leads to an enhanced diffusion time scale
$O(1/\sqrt\nu)$ that is much shorter than the heat equation one
$O(1/\nu)$. This phenomenon is analysed in details in
\cref{sub:enhancedPS}. Furthermore, we exhibit an auxiliary time
$\nu^{-1/3}$ for the decay of $\|\nabla (p \cdot e) \psi\|_{L^2}$, a
quantity that vanishes near the pole. This additional time scale is
coherent with classical results for enhanced dissipation in the
Euclidean setting, where the absence of critical points also leads to
typical time $\nu^{-1/3}$.

In the following \cref{sub:enhancedVF}, we build a vector field
$J_\nu$ under the form
$$ J_\nu  = \alpha_\nu(t) \nabla   + i \beta_\nu(t) \nabla (p \cdot e) $$
such that, roughly:
\begin{enumerate}[(i)]
\item $\alpha_\nu \sim 1$,  $\beta_\nu \sim t$ over times $\nu^{-1/2}$;
\item $J_\nu \psi$ is well-controlled over times $\nu^{-1/2}$.
\end{enumerate}
In the usual Euclidean setting, where $e \cdot p$ is replaced by
$e \cdot v$, $v \in \RR^3$, the vector field $J = \nabla_v + i t e$ is
an easy and convenient choice: it commutes with the advection-diffusion
operator, so that $J \psi$ can be controlled easily for all times.  In
the case of the sphere, we do not know how to construct such a
commuting vector field. Designing a $J_\nu$ for which we can show
properties (i) and (ii) is difficult, and relies on the refined
estimates of \cref{sub:enhancedPS}.

By these vector fields we can obtain the first part of the mixing
estimates of \cref{thm:vector-field-decay} up to time \(\nu^{-1/2}\)
in \cref{sub:mixing}. For the estimates on the longer time scale, we
adapt the estimates in \cref{sub:mixing-long} which then concludes the
proof of \cref{thm:vector-field-decay}.

\subsection{Hypocoercive estimate for advection-diffusion}
\label{sub:enhancedPS}

In this subsection we will introduce the hypocoercive estimates which
will yield the enhanced dissipation of \cref{prop:hypocoercf}.

Using the covariant derivatives \(\nabla\) as discussed in
\cref{app:covariant}, we find in our case of \(\bbS^2\) that
$$ \nabla \Delta \psi =  \Delta \nabla \psi - \nabla \psi $$
where the Laplacian $\Delta = \tr(\nabla^2)$ is the connection
Laplacian (the correction comes from the Ricci curvature tensor which
equals the metric on the sphere). Taking the covariant derivative of
\eqref{eq:ADEfourier}, we therefore get
\begin{align}\label{eq:ADEgrad}
  \partial_t \nabla \psi  +  i p \cdot e  \nabla \psi
  + i \nabla(p\cdot e) \psi
  = \nu \Delta \nabla \psi  - \nu \nabla \psi.
\end{align}
We explicitly compute \(\Delta(\nabla(p\cdot e) \psi)\) in
\cref{thm:laplace-mixed-term} and this concrete expression yields
\begin{align}\label{eq:ADEfac}
  \partial_t (\nabla(p\cdot e) \psi)  +  i p \cdot e (\nabla(p\cdot e)
  \psi)
  = \nu \Delta (\nabla(p\cdot e) \psi) +  \nu \nabla(p\cdot e) \psi
  + 2  \nu (p \cdot e) \nabla  \psi.
\end{align}
We further note that \(|p\cdot e| \le 1\) and
\(|\nabla(p\cdot e)| \le 1\). In spherical coordinates, we find explicitly that
\(|\nabla(p\cdot e)| = \sin \theta\) which is uniformly lower-bounded
away from the poles where it vanishes linearly.

We now derive several energy identities for the solution to
\eqref{eq:ADEfourier}. In all what follows, $\l\cdot,\cdot\r$ and
$\|\cdot\|$ stand for the complex scalar product (with the conjugate
on the second variable) and norm on $L^2(\bbS^2)$ respectively. For
simplicity, we will use the same notations for the scalar product and
norm of $L^2$ vector fields, or more generally for $L^2$ sections of
any tensor bundle over $\bbS^2$.  A direct computation using the
antisymmetry of the operator $ ip \cdot e$ allows us to derive the
$L^2$ balance
\begin{equation}
  \label{eq:L2balance}
  \frac12\ddt \|\psi\|^2 +\nu\|\nabla \psi\|^2 = 0.
\end{equation}
By testing \eqref{eq:ADEgrad} with $\nabla \psi$, we obtain the $H^1$
balance
\begin{equation}
  \label{eq:H1balance}
  \frac12\ddt \|\nabla \psi\|^2+\nu\|\nabla \nabla \psi\|^2
  + \nu \|\nabla \psi\|^2
  = -\Re \l i \nabla(p\cdot e)\psi, \nabla \psi \r
  \le \| \nabla(p\cdot e) \psi \| \, \| \nabla \psi \|.
\end{equation}
Next we find
\begin{equation}
  \label{eq:crossterm}
  \begin{aligned}
    \ddt \Re \l i \nabla(p\cdot e) \psi,
    \nabla \psi \r
    + \| \nabla(p\cdot e) \psi \|^2
    & = \nu   \Re\l i \Delta ( \nabla(p\cdot e) \psi ) ,  \nabla \psi \r   + \nu \l i \nabla (p \cdot e) \psi , \nabla \psi \r \\
    &\quad +  2 \nu   \Re\l i (p\cdot e) \nabla \psi,  \nabla \psi \r + \nu \Re\l i  \nabla(p\cdot e) \psi  , \nabla \Delta \psi \r
    \\
    & = - \nu  \Re\l i \nabla ( \nabla (p\cdot e) \psi )  ,  \nabla^2  \psi \r   -  \nu  \Re\l i (p\cdot e) \psi  , \Delta \psi \r \\
    &\quad -  \nu \Re\l  i \nabla \cdot \big( \nabla(p\cdot e) \psi  \big) ,  \Delta \psi \r \\
    &\le
      4 \nu \| \nabla^2 \psi \|\,
      \big(\| \psi \| + \| \nabla (\nabla(p\cdot e) \psi) \|\big).
  \end{aligned}
\end{equation}
From \eqref{eq:ADEfac} we find
\begin{equation}
  \label{eq:gamma-weight}
  \begin{aligned}
    &\frac 12 \ddt \| \nabla(p\cdot e) \psi \|^2
    + \nu \| \nabla(\nabla(p\cdot e) \psi) \|^2
    + \nu \|\nabla(p\cdot e) \psi \|^2 \\
    &\qquad\qquad= 2 \nu \Re
      \l \nabla(p\cdot e) \psi + (p\cdot e) \nabla \psi,
      \nabla(p \cdot e) \psi \r \\
    &\qquad\qquad= 2 \nu \|\nabla(p\cdot e) \psi \|^2 + \nu \int (p\cdot e)
      \nabla(p\cdot e) \cdot \nabla(|\psi|^2) \\
    &\qquad\qquad\le 2 \nu \|\nabla(p\cdot e) \psi \|^2
      + 2 \nu \| \psi \|^2.
  \end{aligned}
\end{equation}
For positive constants $a,b,c$ to be chosen later \emph{independently}
of $\nu$, define the energy functional
\begin{equation}\label{eq:hypofun:time}
  E[\psi] =
  \frac12 \Big[\|\psi\|^2+a \nu t \|\nabla \psi\|^2
  +2b \nu t^2  \Re \l i \nabla(p\cdot e) \psi  ,\nabla \psi \r
  + c \nu t^3 \| \nabla(p \cdot e) \psi \|^2\Big].
\end{equation}
Such a form of time-dependent functional takes inspiration from \cite{LZ21,DelZotto21,WZ19}.
Assuming that \(2b^2 \le ac\), the mixed term can be controlled by the
squares so that
\begin{align}\label{eq:coercivity:time}
  E[\psi] \ge \frac12
  \left[
  \|\psi\|^2
  + \frac{a \nu t}{2} \|\nabla \psi\|^2
  + \frac{c \nu t^3}{2} \| \nabla(p \cdot e) \psi \|^2
  \right].
\end{align}
Moreover, from \eqref{eq:L2balance}--\eqref{eq:gamma-weight}, we deduce
that $E[\psi]$ satisfies the identity
\begin{equation}\label{eq:energy:time}
  \begin{aligned}
    \ddt E[\psi]
    &\le - \left(\nu - \frac{a\nu}{2}\right) \| \nabla \psi \|^2
      - a\nu^2 t \| \nabla^2 \psi \|^2
      - \left(b\nu t^2 - \frac{3c\nu t^2}{2}\right)
      \| \nabla(p\cdot e) \psi \|^2\\
    &\quad
      - c \nu^2 t^3 \| \nabla(\nabla(p\cdot e) \psi) \|^2
      + 2 b \nu t \| \nabla \psi \|\, \|\nabla(p\cdot e) \psi \|
      + a \nu t \| \nabla(p\cdot e) \psi \|\, \| \nabla \psi \|\\
    &\quad
      + 4 b \nu^2 t^2 \| \nabla^2 \psi \|\,
      \big( \| \psi \| + \| \nabla (\nabla(p\cdot e) \psi) \| \big)
      + 2 c \nu^2 t^3
      \big( \| \nabla(p\cdot e) \psi \|^2 + \| \psi \|^2 \big).
  \end{aligned}
\end{equation}
It follows that
\begin{equation}\label{eq:energy:time-split}
  \begin{aligned}
    \ddt E[\psi]
    &\le - \left(\frac{3\nu}{4} - \frac{a\nu}{2}\right) \| \nabla \psi \|^2
      -  \frac{a\nu^2 t}{2} \| \nabla \nabla \psi \|^2\\
    &\quad
      - \left(b\nu t^2 - \frac{3c\nu t^2}{2}
      - 2 c \nu^2 t^3 - 2a^2 \nu t^2 - 8b^2 \nu t^2
      \right)
      \| \nabla(p\cdot e) \psi \|^2\\
    &\quad
      - \left(c \nu^2 t^3 - \frac{8b^2 \nu^2 t^3}{a}
      \right)
      \| \nabla(\nabla(p\cdot e) \psi) \|^2
      + \left(\frac{16b^2}{a}+2c\right) \nu^2 t^3 \| \psi \|^2.
  \end{aligned}
\end{equation}
We will choose the constants according to the following lemma, which is
crucial for the subsequent analysis and holds for the longer
time-scale \(\nu^{-1}\).
\begin{lemma} \label{thm:hypo-constants}
  For 	any \(\delta > 0\), there exists \(b_0\) such that for the constants
  \(a=b^{2/3}\), \(0<b<b_0\) and \(c= 32b^2/a\) and for all times
  \(t \le \delta \nu^{-1}\) it holds for \(0<\nu\le 1\) that
  \begin{equation*}
    \ddt E[\psi]
    + \frac{\nu}{2} \| \nabla \psi \|^2
    +  \frac{a\nu^2 t}{2} \| \nabla \nabla \psi \|^2
    + \frac{b\nu t^2}{2}
    \| \nabla(p\cdot e) \psi \|^2
    + \frac{c \nu^2 t^3}{2}
    \| \nabla(\nabla(p\cdot e) \psi) \|^2
    \le \left(\frac{16b^2}{a}+2c\right) \nu^2 t^3 \| \psi \|^2.
  \end{equation*}
\end{lemma}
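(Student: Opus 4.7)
The proof is a straightforward algebraic verification: plug the specified values of $a,b,c$ into the energy inequality \eqref{eq:energy:time-split}, use the time restriction $\nu t \le \delta$, and check that every coefficient has the right sign with the right size by taking $b$ small enough. There is no new idea needed beyond a careful bookkeeping of the scaling in $b$, so the proof is really an exercise in balancing powers.

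Starting from \eqref{eq:energy:time-split}, I would check four coefficient conditions in order, each corresponding to one term that should appear on the left-hand side of the claimed inequality. First, comparing the $\|\nabla\psi\|^2$ coefficient, I need $\tfrac{3\nu}{4}-\tfrac{a\nu}{2}\ge \tfrac{\nu}{2}$, equivalently $a\le \tfrac12$, which holds as soon as $b\le (1/2)^{3/2}$ since $a=b^{2/3}$. Second, the $\|\nabla\nabla\psi\|^2$ coefficient already matches exactly. Third, for the $\|\nabla(p\cdot e)\psi\|^2$ term, after factoring out $\nu t^{2}$, the required inequality reads
\begin{equation*}
  b-\tfrac{3c}{2}-2c\nu t-2a^{2}-8b^{2}\;\ge\;\tfrac{b}{2}.
\end{equation*}
Substituting $a=b^{2/3}$, $c=32b^{4/3}$ and using $\nu t\le \delta$, this reduces to
\begin{equation*}
  \tfrac{1}{2}\;\ge\;(50+64\delta)\,b^{1/3}+8b,
\end{equation*}
which clearly holds for all $b\le b_0(\delta)$ small enough. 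Fourth, the $\|\nabla(\nabla(p\cdot e)\psi)\|^{2}$ coefficient requires $c-\tfrac{8b^{2}}{a}\ge \tfrac{c}{2}$, i.e.\ $c\ge \tfrac{16b^{2}}{a}$, and this is in fact an equality up to a factor of $2$ thanks to the choice $c=32b^{2}/a$.

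The only subtle point is that the coercivity requirement $2b^{2}\le ac$ used just before \eqref{eq:coercivity:time} must also hold; with $a=b^{2/3}$ and $c=32b^{4/3}$, one has $ac=32b^{2}\ge 2b^{2}$, so this is automatic. The forcing term on the right is simply the last term of \eqref{eq:energy:time-split}, kept untouched. Since all of the smallness requirements on $b$ depend only on $\delta$ (and not on $\nu$ nor on $t$, as $\nu\le 1$ is used only to absorb the $2c\nu t$ contribution into the $\nu t\le\delta$ bound), a single $b_{0}=b_{0}(\delta)$ works uniformly, which is exactly what the lemma claims. I do not anticipate a genuine obstacle here; the only thing to watch is that the power $b^{1/3}$ appearing from $a=b^{2/3}$ dominates over the linear term $b$ for small $b$, which is why the scaling $a=b^{2/3}$ (rather than $a=b$) is chosen.
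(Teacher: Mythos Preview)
Your proof is correct and follows essentially the same approach as the paper: both reduce the claim to the inequality from \eqref{eq:energy:time-split} and verify the single nontrivial coefficient condition $\tfrac{3c}{2}+2c\delta+2a^{2}+8b^{2}\le \tfrac{b}{2}$ after substituting $a=b^{2/3}$, $c=32b^{4/3}$ and $\nu t\le\delta$. You are simply more explicit in checking the remaining (trivial) coefficients and the coercivity constraint $2b^{2}\le ac$, but there is no substantive difference.
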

\begin{proof}
  By the choices \(a=b^{2/3}\), \(c=32b^2/a\) and the constraint
  \(t \le \delta \nu^{-1/2}\) the result follows from
  \eqref{eq:energy:time-split} as soon as
  \begin{equation*}
    \frac{3c}{2}
    + 2 c \delta
    + 2 a^2
    + 8 b^2
    \le \frac{b}{2}
    \quad \Leftrightarrow \quad
    \left(\frac 32 + 2\delta\right) 16 b^{4/3} + 2 b^{4/3}
    + 8 b^2
    \le \frac b2.
  \end{equation*}
  Hence we find the result for a small enough \(b_0\).
\end{proof}

To conclude the enhanced dissipation from the previous lemma, we need
to control \(\| \psi \|\). This is achieved by an interpolation
inequality involving \(\| \nabla(p\cdot e) \psi\|\), which gives a
good control apart from the poles \(\pm e\), and
\(\| \nabla \psi \|\), see also
\cite[Lemma~4.3]{albritton-ohm-2022-preprint}.

\begin{lemma}\label{thm:hypo-interpolation}
  For all $\sigma\in (0,1]$, all vectors $e \in \bbS^2$ and all
  complex-valued $g\in H^1(\bbS^2)$, the following inequality holds
  \begin{equation}\label{eq:spectralgap}
    \sigma^{1/2}\|g\|^2\le \frac{\sigma}{2} \| \nabla g\|^2
    + 2 \| \nabla(p\cdot e) \,g  \|^2.
  \end{equation}
\end{lemma}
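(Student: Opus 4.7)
The starting observation is the pointwise identity $|\nabla(p\cdot e)|^2 + (p\cdot e)^2 = \sin^2\theta + \cos^2\theta = 1$ on $\bbS^2$. Multiplying by $|g|^2$ and integrating gives the clean decomposition
\begin{equation*}
  \|g\|^2 = \|\nabla(p\cdot e)\, g\|^2 + \int_{\bbS^2} (p\cdot e)^2 |g|^2\, \dd p,
\end{equation*}
so the entire problem reduces to estimating the second piece, which is the dangerous one because the weight $(p\cdot e)^2$ is of order $1$ precisely at the poles where $\nabla(p\cdot e)$ degenerates.

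To control it, I would exploit the fact that $p\cdot e$ is a first-order spherical harmonic, i.e.\ $\Delta(p\cdot e) = -2(p\cdot e)$, to write $(p\cdot e)^2 = -\tfrac12 (p\cdot e) \Delta(p\cdot e)$ and integrate by parts against $|g|^2$:
\begin{equation*}
  \int (p\cdot e)^2 |g|^2 \, \dd p
  = \tfrac12 \int \nabla(p\cdot e) \cdot \nabla\bigl((p\cdot e) |g|^2\bigr)\,\dd p
  = \tfrac12 \|\nabla(p\cdot e) g\|^2 + \Re \int (p\cdot e)\, \bar g\, \nabla(p\cdot e)\cdot \nabla g\, \dd p.
\end{equation*}
The cross term is estimated by Cauchy--Schwarz, bundling $|\nabla(p\cdot e)||g|$ together and using $|p\cdot e|\le 1$, to get a bound $\|\nabla(p\cdot e) g\|\,\|\nabla g\|$. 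Plugging back yields
\begin{equation*}
  \|g\|^2 \le \tfrac32 \|\nabla(p\cdot e)\,g\|^2 + \|\nabla(p\cdot e)\,g\|\,\|\nabla g\|.
\end{equation*}

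The scale $\sigma^{1/2}$ is then produced by a weighted Young inequality, $ab \le \frac{a^2}{2\sigma^{1/2}} + \frac{\sigma^{1/2}}{2} b^2$, applied with $a = \|\nabla(p\cdot e) g\|$ and $b = \|\nabla g\|$. Multiplying through by $\sigma^{1/2}$ and using $\sigma \le 1$ to absorb $\frac{3\sigma^{1/2}+1}{2} \le 2$ delivers exactly \eqref{eq:spectralgap}.

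I do not anticipate a serious obstacle; the main point is conceptual rather than technical. A naive alternative --- splitting $\bbS^2$ into a polar cap $\{|p\cdot e|\ge 1-\rho\}$ and its complement, controlling the complement by $\rho^{-1}\|\nabla(p\cdot e)g\|^2$ and the cap by some one-dimensional Hardy/Poincar\'e bound on $\|\nabla g\|$ --- misses the sharp $\sigma^{1/2}$ scaling, because near a pole the gradient weight $1-z^2$ and the cap size $\rho$ are linked through the singular change of variables $z = 1 - r^2/2$. Using the global algebraic identity $\Delta(p\cdot e)=-2(p\cdot e)$ bypasses this issue, essentially playing the role of the Heisenberg-type identity $\nabla\cdot x = d$ on $\RR^d$ that underlies the Euclidean uncertainty principle.
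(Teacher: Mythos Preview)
Your proof is correct and follows essentially the same route as the paper's. The only cosmetic difference is that the paper works in spherical coordinates and integrates by parts via $\cos\theta=\partial_\theta\sin\theta$, whereas you phrase the same step coordinate-free through the spherical-harmonic identity $\Delta(p\cdot e)=-2(p\cdot e)$; both lead to the same cross term $\|\nabla(p\cdot e)\,g\|\,\|\nabla g\|$ and the same weighted Young inequality producing the $\sigma^{1/2}$ scaling.
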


\begin{proof}
  Introducing the spherical coordinates $(\theta, \varphi)$, with
  latitude $\theta \in (0, \pi)$ and longitude $\varphi \in (0,2\pi)$,
  the inequality \eqref{eq:spectralgap} becomes
  \begin{align}\label{eq:spectralgap:proof}
    \sigma^{1/2}\|g\|^2
    \le \frac{\sigma}{2} \int_{\bbS^2}
    \left( |\partial_\theta g|^2 + \frac{1}{\sin^2\theta }
    |\partial_\varphi g|^2\right) \sin \theta\, \dd\theta\, \dd\varphi
    + 2 \|g\sin\theta \|^2.
  \end{align}
  Now, for  $\sigma\leq 1$, we have
  \begin{align}\label{eq:spectral1}
    \sigma^{1/2}\|g\|^2 = \sigma^{1/2}\|g \sin\theta \|^2+ \sigma^{1/2}\|g \cos\theta \|^2\leq
    \| g\sin\theta \|^2+ \sigma^{1/2}\| g\cos\theta \|^2.
  \end{align}
  Moreover, an integration by parts entails
  \begin{align*}
    \| g\cos\theta \|^2
    &= \int_{\bbS^2} \cos\theta\partial_\theta (\sin\theta) |g|^2\sin\theta
      \,\dd\theta\,\dd\varphi
      =\| g\sin\theta   \|^2-\| g\cos\theta   \|^2
      -2  \Re\l \partial_\theta g \cos\theta,  g \sin\theta \r\\
    &\leq \sigma^{-1/2}\| g\sin\theta   \|^2 -\| g\cos\theta   \|^2+ \sigma^{1/2}\|\partial_\theta g\|^2,
  \end{align*}
  implying
  \begin{align}
    \sigma^{1/2}\| g\cos\theta \|^2\leq\| g\sin\theta   \|^2+\frac\sigma2 \|\partial_\theta g\|^2.
  \end{align}
  Combining the above estimate with \eqref{eq:spectral1}, we obtain
  the desired estimate \eqref{eq:spectralgap:proof}.
\end{proof}

We can now conclude the enhanced dissipation.
\begin{proof}[Proof of \cref{prop:hypocoercf}]
  Fix the constants \(a,b,c\) according to \cref{thm:hypo-constants}. Let $\lambda > 0$ to be fixed later independently of $\nu$. Let $\nu_0$ such that
  $\frac{4 a \nu_0}{c \lambda^2} \le 1$.
  From the definition of  $E[\psi]$, we have, for all $\nu \le \nu_0$,  at time $t = \lambda \nu^{-1/2}$
  \begin{align*}
    E[\psi] & \ge  \frac{1}{2} \| \psi \|^2 +  \frac{a \nu^{1/2} \lambda}{2} \|\nabla \psi\|^2
              + \frac{c \lambda^3 \nu^{-1/2}}{2} \| \nabla(p \cdot e) \psi \|^2 \\
            & \ge   \frac{1}{2} \| \psi \|^2  +  \frac{c \lambda^3 \nu^{-1/2}}{4} \left(  2 \| \nabla(p \cdot e) \psi \|^2 +   \frac{2 a \nu}{c \lambda^2} \|\nabla \psi\|^2 \right) \\
            & \ge  \frac12\left(  1 +   (ac)^{1/2} \lambda^2 \right)  \| \psi \|^2,
  \end{align*}
  where the last line comes from \cref{thm:hypo-interpolation}, with
  $\sigma := \frac{4 a \nu}{c \lambda^2} \le 1$.  By the evolution
  estimate of \cref{thm:hypo-constants} we also find (as \(\|\psi\|\)
  is non-increasing), that at time $t = \lambda \nu^{-1/2}$,
  \begin{equation*}
    E[\psi]  \le \frac 12 \| \psi^{in} \|^2
    +c \lambda^4\| \psi^{in} \|^2.
  \end{equation*}
  Hence we find that at time $t = \lambda \nu^{-1/2}$,
  \begin{equation*}
    \| \psi \|^2
    \le
    \frac{1 + 2c \lambda^4}{1  +   (ac)^{1/2} \lambda^2 }
    \| \psi^{in} \|^2.
  \end{equation*}
  Taking $\lambda$ small enough (depending on $a,b,c$ but
  independently of $\nu$), the factor at the right-hand side is less
  than \(1\), which implies exponential decay with a rate proportional
  to \(\nu^{1/2}\).
\end{proof}

\subsection{Hypocoercive estimate for vector fields}
\label{sub:enhancedVF}

In this subsection we introduce the vector fields to show the mixing
estimate \cref{thm:vector-field-decay} for \(\nu>0\) when the
semigroup \(\e^{t L_1}\) cannot solved explicitly. This has already
been used in a few instances for mixing estimates
\cite{CLN21,CZ20}. In terms of \(L_1\), the evolution of \(\psi\) of
\eqref{eq:ADEfourier}  can be written as
\begin{equation*}
  (\partial_t  - L_1) \psi = 0, \quad L_1  = - i e \cdot p  + \nu \Delta.
\end{equation*}
and the idea is to find a vector field \(J\) for which we control
\(J\psi\). A natural candidate for a vector field is
$J = \nabla + i t \nabla(p\cdot e)$, which commutes with the inviscid
part of the equation.  However, it does not commute well with the
diffusion operator. Namely, we find that
\begin{equation*}
  (\partial_t  - L_1) J\psi + \nu J \psi
  = 2 i \nu t \big(\nabla(p\cdot e) \psi + (p\cdot e) \nabla \psi \big).
\end{equation*}
By Duhamel's formula,
\begin{equation*}
  J\psi(t) = \e^{(L_1-\nu) t}  J\psi(0)
  + 2 i \nu  \int_0^t \e^{(L_1-\nu) (t-s)}
  \big( \nabla(p\cdot e) \psi(s)+ (p \cdot e) \nabla \psi(s) \big) \dd s.
\end{equation*}
If we were to rely only on the straightforward (yet optimal on time
scales $O(1)$) bounds
\begin{equation*}
  \|\psi(s)\|_{L^2} \le C, \quad  \| \nabla \psi(s) \|_{L^2} \le C (1+ s),
\end{equation*}
we would get
\begin{align*}
 \| J\psi(t)\|_{L^2} \le C \Big( 1 +  \nu \int_{0}^t   s (1+s)\, \dd s
  \Big),
  \qquad \forall t \le  \nu^{-1/2}.
\end{align*}
The second term behaves like $C \nu t^3$, and therefore diverges for
$t \gg \nu^{-1/3}$, which is a faster time scale than the enhanced
dissipation one for this problem.

To overcome this issue, we introduce the \emph{viscosity-adapted}
vector field \(J_\nu\) of the form
\begin{equation}
  J_\nu \psi = \alpha(t) \nabla \psi + i \beta(t) \nabla(p\cdot e) \psi.
\end{equation}
for scalar functions $\alpha = \alpha_\nu$, $\beta = \beta_\nu$. For
the evolution, we then find
\begin{align*}
  \Big(\partial_t + i (p\cdot e) - \nu \Delta \Big)
  &(\alpha \nabla \psi)
  = \alpha' \nabla \psi - i\alpha \nabla(p\cdot e) \psi - \nu \alpha \nabla \psi, \\
  \Big(\partial_t + i (p\cdot e) - \nu \Delta \Big)
  &(i \beta \nabla(p\cdot e) \psi)
    = i \beta' \nabla(p\cdot e) \psi
    - i \beta \nu   \nabla (p\cdot e)  \psi
    + 2 i \beta \nu \nabla \big( (p\cdot e) \psi \big).
\end{align*}
In order to control the commutator error terms, we set
\begin{equation*}
  \beta' = \alpha
  \quad \text{ and } \quad
  \alpha' = -2i\nu \beta
\end{equation*}
and thus take
\begin{equation*}
  \alpha(t) = \cosh(\sqrt{-2i\nu}\, t)
  \quad\text{ and }\quad
  \beta(t) = \frac{1}{\sqrt{-2i\nu}} \sinh(\sqrt{-2i\nu}\, t).
\end{equation*}
For the time frame \(t \le \nu^{-1/2}\) we see that \(\alpha \sim 1\)
and \(\beta \sim t\). By this choice, we find that now
\(X = J_\nu \psi\) solves
\begin{equation}\label{eq:evo-x}
  \Big(\partial_t + i (p\cdot e) - \nu \Delta \Big) X + \nu X   = 2i\beta\nu \nabla([p\cdot e-1] \psi)
\end{equation}
or
\begin{equation}\label{eq:evo-x-factor}
  \Big(\partial_t
  + i (p\cdot e) - \frac{2i\beta\nu}{\alpha} [p\cdot e -1]
  - \nu \Delta \Big)
  X + \nu X = -\frac{2\beta^2}{\alpha} \nu
  [p \cdot e - 1] \nabla(p\cdot e) \psi
  + 2 i \beta \nu \nabla(p\cdot e) \psi.
\end{equation}
The gain from the adapted vector field \(J_\nu\) is that the
right-hand side now vanishes at the north pole $p = e$. As the
enhanced dissipation estimate from \cref{sub:enhancedPS} provides
better decay properties for quantities that vanish at the poles
\(\pm e\), this will provide a better control of the source term, and
in turn a better control of $X$. Obviously, the right-hand side still
does not vanish at the south pole $p = -e$, so that we need to
localize the estimates away from this pole. Symmetrically, one could
construct another vector field $\tilde{X}$ for which the roles of the
north and south poles would be reversed.

To obtain good control of $X$ (away from the south pole), our starting
point is \cref{thm:hypo-constants} with $\delta=1$. After integration
in time, using that $\|\psi(t)\| \le \|\psi^{in}\|$ for all $t$, we
get for all $t_* \le \nu^{-1}$ that
\begin{equation} \label{inequality_integral_psi}
  \begin{aligned}
    & \sup_{t \in (0,t_*)}
      \Big( \|\psi\|^2 + \nu t \|\nabla \psi\|^2 + \nu t^3 \|\nabla (p \cdot e) \psi\|^2 \Big)\\
    & +  \int_0^{t_*}  \Big( \nu \|\nabla \psi\|^2
      + \nu t^2 \|\nabla(p \cdot e) \psi\|^2 + \nu^2 t \|\nabla^2 \psi\|^2
      + \nu^2 t^3 \|\nabla ( \nabla (p \cdot e) \psi)\|^2\Big)
      \lesssim (1+\nu^2 t_*^4) \|\psi^{in}\|^2.
  \end{aligned}
\end{equation}
From there, as a preliminary step, we deduce a few easy bounds on $X$
and $R$:
\begin{lemma}\label{thm:control-x-overall}
  There exists $\nu_0 > 0$, such that for all $0 < \nu \le \nu_0$ and
  all $t_* \le \nu^{-1}$ it holds that
  \begin{equation*}
    \int_0^{t_*} \nu \| X \|^2\, \dd t
    + \int_0^{t_*} \nu^2 t
    \| \nabla X \|^2\, \dd t
    \lesssim \sup_{[0,t_*]} \Big(|\alpha|^2 + \frac{|\beta|^2}{t^2}\Big)  (1+\nu^2 t_*^4)\| \psi^{in} \|^2.
  \end{equation*}
  Furthermore, for
  \begin{equation} \label{defi:R}
    R = 2i\beta\nu \nabla \big( [p\cdot e-1]  \psi \big)
  \end{equation}
  and any cutoff \(\chi\) excluding the south pole \(-e\) we have
  \begin{equation*}
    \int_0^{t_*} t \| R \chi \|^2\, \dd t
    \lesssim
    \sup_{[0,t^*]} \Big(|\alpha|^2 + \frac{|\beta|^2}{t^2}\Big)
    (1+\nu^2 t_*^4)\| \psi^{in} \|^2.
  \end{equation*}
  In particular,
   \begin{equation*}
    \int_0^{\nu^{-1/2}} \nu \| X \|^2\, \dd t
    + \int_0^{\nu^{-1/2}} \nu^2 t
    \| \nabla X \|^2\, \dd t +   \int_0^{\nu^{-1/2}} t \| R \chi \|^2\, \dd t
    \lesssim \| \psi^{in} \|^2.
   \end{equation*}
\end{lemma}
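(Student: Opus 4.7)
The plan is to bound $\|X\|$, $\|\nabla X\|$ and $\|R\chi\|$ pointwise in $t$ by the five quantities $\|\psi\|$, $\|\nabla \psi\|$, $\|\nabla^2 \psi\|$, $\|\nabla(p\cdot e)\psi\|$ and $\|\nabla(\nabla(p\cdot e)\psi)\|$ that appear in the hypocoercive estimate \eqref{inequality_integral_psi}, and then to apply that estimate after redistributing the weights in $t$ and $\nu$. The hypothesis $t_* \le \nu^{-1}$ enters only through $\nu t_* \le 1$, and the particular case $t_* = \nu^{-1/2}$ follows at the end from $\nu^2 t_*^4 = 1$ and the uniform bound $|\alpha|^2 + |\beta|^2/t^2 = O(1)$ on $[0,\nu^{-1/2}]$.

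For the first two terms the argument is essentially bookkeeping. From $X = \alpha \nabla \psi + i\beta \nabla(p\cdot e)\psi$ one reads off
\begin{equation*}
  \|X\|^2 \lesssim |\alpha|^2 \|\nabla\psi\|^2 + |\beta|^2 \|\nabla(p\cdot e)\psi\|^2,
  \qquad
  \|\nabla X\|^2 \lesssim |\alpha|^2 \|\nabla^2 \psi\|^2 + |\beta|^2 \|\nabla(\nabla(p\cdot e)\psi)\|^2.
\end{equation*}
The rewriting $\nu |\beta|^2 = (|\beta|^2/t^2)\,\nu t^2$ and $\nu^2 t |\beta|^2 = (|\beta|^2/t^2)\,\nu^2 t^3$ then matches each integral with one of the five integrals already controlled by \eqref{inequality_integral_psi}, up to the prefactor $\sup_{[0,t_*]}(|\alpha|^2 + |\beta|^2/t^2)$.

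The heart of the argument is the bound on $R\chi = 2 i \beta \nu \chi \nabla((p\cdot e -1)\psi)$, and specifically the term arising from $(p\cdot e -1)\nabla\psi$ in the Leibniz expansion $\nabla((p\cdot e -1)\psi) = \nabla(p\cdot e)\psi + (p\cdot e -1)\nabla\psi$. I would exploit the spherical identity
\begin{equation*}
  |\nabla(p\cdot e)|^2 = 1 - (p\cdot e)^2 = (1-p\cdot e)(1+p\cdot e),
\end{equation*}
which, combined with the lower bound $1 + p \cdot e \ge c_\chi > 0$ valid on $\supp \chi$, gives $|\chi(p\cdot e -1)| \lesssim |\nabla(p\cdot e)|$. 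Hence pointwise $|\chi (p\cdot e -1) \nabla\psi| \lesssim |\nabla(p\cdot e) \otimes \nabla \psi|$, and combining with the Leibniz identity $\nabla(p\cdot e)\otimes \nabla\psi = \nabla(\nabla(p\cdot e)\psi) - \nabla^2(p\cdot e)\psi$ together with the fact that $p\cdot e$ is an eigenfunction of $-\Delta$ on $\bbS^2$, so that $\nabla^2(p\cdot e) = -(p\cdot e)\,g$ is uniformly bounded, yields
\begin{equation*}
  \|\chi \nabla((p\cdot e -1)\psi)\|
  \lesssim \|\nabla(p\cdot e)\psi\| + \|\nabla(\nabla(p\cdot e)\psi)\| + \|\psi\|.
\end{equation*}

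Inserting this into $\|R\chi\|^2 \lesssim \nu^2 |\beta|^2 \|\chi \nabla((p\cdot e -1)\psi)\|^2$ and multiplying by $t$, each of the three resulting integrals is matched to \eqref{inequality_integral_psi} by redistributing powers of $t$. In the terms involving $\|\nabla(\nabla(p\cdot e)\psi)\|^2$ and $\|\psi\|^2$, one writes $t \nu^2 |\beta|^2 = (|\beta|^2/t^2)\, \nu^2 t^3$ and uses respectively the bound on $\int_0^{t_*} \nu^2 t^3 \|\nabla(\nabla(p\cdot e)\psi)\|^2 \dd t$ from \eqref{inequality_integral_psi} and the elementary $\sup_t \|\psi\|^2 \le \|\psi^{in}\|^2$ with $\int_0^{t_*} \nu^2 t^3 \dd t \lesssim \nu^2 t_*^4$. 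In the remaining term involving $\|\nabla(p\cdot e)\psi\|^2$, one writes $t \nu^2 |\beta|^2 = (|\beta|^2/t^2)(\nu t)\,\nu t^2$ and absorbs $\nu t \le \nu t_* \le 1$; this is the sole place where the restriction $t_* \le \nu^{-1}$ is required. The main obstacle in the proof is thus the geometric estimate on $\chi(p\cdot e -1)\nabla\psi$ described above; once it is in place, everything else is routine.
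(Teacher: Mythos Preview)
Your proof is correct and follows essentially the same route as the paper. The key geometric step — bounding $|\chi(p\cdot e-1)|\lesssim|\nabla(p\cdot e)|$ on $\supp\chi$ via $|\nabla(p\cdot e)|^2=(1-p\cdot e)(1+p\cdot e)$, and then rewriting $\nabla(p\cdot e)\otimes\nabla\psi=\nabla(\nabla(p\cdot e)\psi)-\nabla^2(p\cdot e)\,\psi$ — is exactly what the paper uses, though there it is condensed into the single line
\[
  t\|R\chi\|^2 \;\lesssim\; \nu^2|\beta|^2 t\big(\|[p\cdot e-1]\nabla\psi\,\chi\|^2+\|\psi\chi\|^2\big)
  \;\lesssim\; \frac{|\beta|^2}{t^2}\,\nu^2 t^3\big(\|\nabla(\nabla(p\cdot e)\psi)\,\chi\|^2+\|\psi\chi\|^2\big).
\]
The only cosmetic difference is that the paper absorbs $\|\nabla(p\cdot e)\psi\|\le\|\psi\|$ immediately, so only two terms survive rather than your three; this spares the extra use of $\nu t\le 1$ that you invoke for the $\|\nabla(p\cdot e)\psi\|^2$ contribution. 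One small remark: the restriction $t_*\le\nu^{-1}$ is not \emph{only} used where you say — it is already needed for the hypocoercive estimate \eqref{inequality_integral_psi} (via \cref{thm:hypo-constants}) to hold at all.
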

\begin{proof}
  The bounds relative to $X$ follow from the definition
  \(X = \alpha \nabla \psi + i \beta \nabla(p\cdot e) \psi\) and from
  the hypocoercive estimate \eqref{inequality_integral_psi}.  For the
  control of \(R\), we decompose
  \begin{equation*}
    t  \| R \chi \|^2
    \le 2 \nu^2 |\beta|^2 t  \left( \| [p\cdot e-1] \nabla \psi
      \chi \|^2 +  \| \psi \chi\|^2 \right)
    \le C \left(\frac{|\beta|^2}{t^2}\right)
    \nu^2 t^3   \left( \| \nabla(\nabla(p\cdot e) \psi) \chi \|^2 + \| \psi \chi\|^2 \right)
  \end{equation*}
  and the estimate also follows from \eqref{inequality_integral_psi}.
\end{proof}

We now state the key estimates of this paragraph, where we first focus
on times up to \(\nu^{-1/2}\). In the first iteration, we use the
nested cutoffs \(\chi\) and \(\chi'\) excluding the south pole
\(-e\).

\begin{lemma}\label{thm:x-hypo}
  There exists $\nu_0 > 0$, such that for all $0 < \nu \le \nu_0$ and
  cutoffs all $\chi, \chi':\bbS^2 \to [0,1]$ such that $\chi' = 1$ on
  the support of $\chi$ and such that $-e$ does not belong to the
  support of $\chi'$, one has for some $C > 0$ the estimate
  \begin{equation*}
    \begin{aligned}
      &\sup_{t _* \in [0,\nu^{-1/2}]}
        \Big(  \| X \chi \|^2 + \nu t \| \nabla X \chi \|^2  + \nu t^3 \| \nabla (p \cdot e) \otimes X \chi \|^2
        \Big) \\
      &+ \int_0^{ \nu^{-1/2}}
        \Big(
        \nu \| \nabla X \chi \| + \nu^2 \| \nabla^2 X \chi\|^2
        + \nu t^2 \| \nabla(p\cdot e)  \otimes  X \chi \|^2
        + \nu^2 t^3 \| \nabla(\nabla(p\cdot e) \otimes X) \chi \|^2
        \Big)\, \dd t \\
       &\qquad \le C \left( \|\psi^{in} \|_{H^1}^2 + \int_0^{\nu^{-1/2}}  \nu^2 t^3 \| \nabla(p\cdot e)  \otimes  X \chi' \|^2 \dd t\right).
    \end{aligned}
  \end{equation*}
\end{lemma}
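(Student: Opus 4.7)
The plan is to replicate the hypocoercive energy estimate of \cref{thm:hypo-constants}, but applied to the cut-off vector field $X\chi$ in place of $\psi$, using the evolution equation \eqref{eq:evo-x} and exploiting the extra dissipation $+\nu X$ on its left-hand side. First, commuting $\chi$ through $-\nu\Delta$, one obtains
\begin{equation*}
(\partial_t + i(p\cdot e) - \nu\Delta)(X\chi) + \nu(X\chi) = R\chi - 2\nu \nabla\chi \cdot \nabla X - \nu (\Delta\chi)\, X,
\end{equation*}
with $R$ as in \eqref{defi:R}. The two commutator terms are supported on $\operatorname{supp}(\nabla\chi) \subset \{\chi' = 1\}$, and the source $R\chi$ is supported away from $-e$.

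Next, I would reproduce the four energy identities \eqref{eq:L2balance}--\eqref{eq:gamma-weight} with $X\chi$ instead of $\psi$, interpreting $\nabla$ as the covariant derivative on vector fields (with the appropriate Ricci correction on $\bbS^2$). The qualitative structure is unchanged and the hypocoercive functional
\begin{equation*}
E[X\chi] = \frac12\Big[\|X\chi\|^2 + a\nu t\|\nabla(X\chi)\|^2 + 2b\nu t^2 \Re\l i\nabla(p\cdot e)\otimes X\chi,\, \nabla(X\chi)\r + c\nu t^3 \|\nabla(p\cdot e)\otimes X\chi\|^2\Big]
\end{equation*}
would satisfy, modulo the contributions from the new right-hand sides, the same differential inequality as in \cref{thm:hypo-constants}. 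The choice of $a,b,c$ there still works, since the additional $+\nu(X\chi)$ on the left only strengthens the dissipative structure.

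The new terms produced by $R\chi$ and the commutators are absorbed as follows. Pairing $R\chi$ with $X\chi$, $\nabla(X\chi)$ and $\nabla(p\cdot e)\otimes X\chi$ in the four identities and using Young's inequality against the $\nu$-weighted dissipative terms, one reduces them to $\int_0^{t_*} t\|R\chi\|^2\,\dd t \lesssim \|\psi^{in}\|^2$, controlled by \cref{thm:control-x-overall}. For the commutators $\cC := -2\nu\nabla\chi\cdot\nabla X - \nu(\Delta\chi)X$, the pointwise bound $|\cC|^2 \lesssim \nu^2(|\nabla X|^2+|X|^2)\ind_{\{\chi'=1\}}$ and \cref{thm:control-x-overall} again give a contribution bounded by $\|\psi^{in}\|^2$. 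The initial condition $E[X\chi](0) = \tfrac12\|\nabla\psi^{in}\,\chi\|^2 \lesssim \|\psi^{in}\|_{H^1}^2$ provides the remaining piece of the stated upper bound.

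The delicate point, and the origin of the second term in the claim, is the identity for $c\nu t^3\|\nabla(p\cdot e)\otimes X\chi\|^2$: here one encounters pairings of type $\l \nabla(p\cdot e)\otimes R\chi,\,\nabla(p\cdot e)\otimes X\chi\r$ that cannot all be absorbed into dissipation or into \cref{thm:control-x-overall}. Using $R = 2i\beta\nu\nabla((p\cdot e -1)\psi)$ and integrating by parts to move the outer gradient, this pairing produces, after Cauchy--Schwarz, a contribution of the form $C\int_0^{\nu^{-1/2}}\nu^2 t^3\|\nabla(p\cdot e)\otimes X\chi'\|^2\,\dd t$, matching exactly the second term in the conclusion. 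The main obstacle is precisely this bookkeeping: one must exploit the gradient structure of $R$ so that the second factor of each bad pairing inherits a $\nabla(p\cdot e)$ weight and thus lands on the controllable $\|\nabla(p\cdot e)\otimes X\chi'\|^2$, rather than on $\|X\chi'\|^2$, whose weighted integral would not close. Once this is achieved, integrating the differential inequality on $[0,\nu^{-1/2}]$ yields the claimed bound.
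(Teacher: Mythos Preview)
Your overall strategy is right and matches the paper: reproduce the hypocoercive functional \eqref{eq:hypofun:time} for $X\chi$, run the four identities \eqref{eq:L2balance}--\eqref{eq:gamma-weight} with the extra commutator and $R$-source contributions, and absorb the latter using \cref{thm:control-x-overall}. The initial data and the $E_1,E_2$-type pairings are handled as you say.

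However, your account of the delicate step has a genuine gap. You write that in the pairing $\langle \nabla(p\cdot e)\otimes R\chi,\,\nabla(p\cdot e)\otimes X\chi\rangle$ one should ``integrate by parts to move the outer gradient'' of $R=2i\beta\nu\nabla([p\cdot e-1]\psi)$, so that the second Cauchy--Schwarz factor becomes $\|\nabla(p\cdot e)\otimes X\chi'\|$. But integration by parts alone puts $[p\cdot e-1]\psi$ (or, expanding the product, $\nabla(p\cdot e)\psi$ and $[p\cdot e-1]\nabla\psi$) in the second factor, not $X$, and the large $|\beta|\sim t$ coefficient remains. With the $c\nu t^3$ weight this leads to terms like $\int_0^{\nu^{-1/2}} \nu^3 t^6 \|\nabla(\nabla(p\cdot e)\psi)\|^2\,\dd t$, which against the available control $\int_0^{\nu^{-1/2}} \nu^2 t^3\|\nabla(\nabla(p\cdot e)\psi)\|^2\,\dd t\lesssim \|\psi^{in}\|^2$ is off by $\nu^{-1/2}$ and does not close. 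The paper's key maneuver is algebraic, not an integration by parts: use the very definition $i\beta\nabla(p\cdot e)\psi = X-\alpha\nabla\psi$ to rewrite
\[
\nabla(p\cdot e)\otimes R = 2i\nu\,\nabla\big([p\cdot e-1](X-\alpha\nabla\psi)\big) - 2i\beta\nu\,\nabla^2(p\cdot e)\,[p\cdot e-1]\psi,
\]
which simultaneously removes the $\beta$ factor from the main piece and produces the $X$-based second factor $\|[p\cdot e-1]X\chi'\|\lesssim\|\nabla(p\cdot e)\otimes X\chi'\|$ that you want. This substitution is what makes the estimate close; your proposal as written does not contain it. A secondary point: the $\chi'$ term in the conclusion also arises from the cutoff commutator in the cross-term identity (the $\nu\|\Delta X\chi\|\,\|\nabla\chi\cdot\nabla(p\cdot e)X\|$ piece of the paper's $E_3$), not only from the $E_4$-pairing.
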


Before the proof, we show that iterating this estimates shows the
estimate with the sharp rates.
\begin{corollary}\label{cor:localized}
  There exists $\nu_0 > 0$, such that for all $0 < \nu \le \nu_0$ and
  cutoff $\chi : \bbS^2 \to [0,1]$ such that $-e$ does not belong to
  the support of $\chi$, one has for some $C > 0$ the estimate
  \begin{equation*}
    \begin{aligned}
      &\sup_{t \in [0,\nu^{-1/2}]}
        \Big(  \| X \chi \|^2 + \nu t \| \nabla X \chi \|^2  + \nu t^3 \| \nabla (p \cdot e) \otimes X \chi \|^2
        \Big) \\
      &\qquad+ \int_0^{ \nu^{-1/2}}
        \Big(
        \nu \| \nabla X \chi \| + \nu^2 \| \nabla^2 X \chi \|^2
        + \nu t^2 \| \nabla(p\cdot e)  \otimes  X \chi \|^2
        + \nu^2 t^3 \| \nabla(\nabla(p\cdot e) \otimes X) \chi \|^2
      \Big)\, \dd t\\
      &\le C \|\psi^{in}\|_{H^1}^2.
    \end{aligned}
  \end{equation*}
\end{corollary}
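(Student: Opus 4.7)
\emph{Proof proposal.} The plan is to iterate \cref{thm:x-hypo} along a short chain of nested cutoffs and close the resulting bootstrap by combining the gain $\nu t\le \nu^{1/2}$ valid on $[0,\nu^{-1/2}]$ with the crude global bound of \cref{thm:control-x-overall}. The observation driving the argument is that, on this time interval, $\nu^2 t^3 \le \nu^{1/2}\cdot \nu t^2$ pointwise, so the ``penalty'' term $\int_0^{\nu^{-1/2}} \nu^2 t^3 \|\nabla(p\cdot e)\otimes X \chi'\|^2\,\dd t$ appearing on the right-hand side of \cref{thm:x-hypo} is controlled by $\nu^{1/2}$ times one of the favourable terms $\int_0^{\nu^{-1/2}} \nu t^2 \|\nabla(p\cdot e)\otimes X \chi'\|^2 \,\dd t$ sitting on its left-hand side.

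Concretely, set $\chi_0 := \chi$ and choose smooth cutoffs $\chi_1,\chi_2:\bbS^2\to[0,1]$ with $\chi_{i+1}\equiv 1$ on $\supp\chi_i$ and $-e\notin \supp\chi_{i+1}$; this is possible because $\chi$ itself avoids an open neighbourhood of $-e$, so one can fatten the supports twice while keeping a safety neighbourhood of $-e$ excluded. Writing, for $i=0,1,2$,
\begin{equation*}
  B_i := \int_0^{\nu^{-1/2}} \nu t^2 \,\|\nabla(p\cdot e)\otimes X \chi_i\|^2\,\dd t,
  \qquad
  A_i := \int_0^{\nu^{-1/2}} \nu^2 t^3 \,\|\nabla(p\cdot e)\otimes X \chi_i\|^2\,\dd t,
\end{equation*}
and denoting by $\mathcal{E}(\chi_i)$ the full left-hand side of \cref{thm:x-hypo} with cutoff $\chi_i$, two applications of \cref{thm:x-hypo}, to the pairs $(\chi_0,\chi_1)$ and $(\chi_1,\chi_2)$, give
\begin{equation*}
  \mathcal{E}(\chi_0) \le C \|\psi^{in}\|_{H^1}^2 + C A_1,
  \qquad
  B_1 \le \mathcal{E}(\chi_1) \le C \|\psi^{in}\|_{H^1}^2 + C A_2.
\end{equation*}

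Combining these with the interpolation $A_i \le \nu^{1/2} B_i$ yields $A_1 \le C\nu^{1/2}\|\psi^{in}\|_{H^1}^2 + C\nu^{1/2} A_2$, and it remains only to bound $A_2$ crudely. Since $|\nabla(p\cdot e)|\le 1$, $|\chi_2|\le 1$ and $\sup_{[0,\nu^{-1/2}]}\nu t^3 = \nu^{-1/2}$, \cref{thm:control-x-overall} provides
\begin{equation*}
  A_2 \le \nu^{-1/2}\int_0^{\nu^{-1/2}} \nu \|X\|^2\,\dd t \le C \nu^{-1/2}\|\psi^{in}\|_{H^1}^2.
\end{equation*}
The two factors $\nu^{\pm 1/2}$ cancel, hence $A_1 \le C\|\psi^{in}\|_{H^1}^2$, and inserting this into the bound on $\mathcal{E}(\chi_0)$ concludes the proof. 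The delicate point is that a single iteration of \cref{thm:x-hypo} would not suffice: it would leave the penalty $A_1$ to be controlled by the crude $O(\nu^{-1/2})$ estimate alone, which diverges. The second iteration is precisely what allows the $\nu^{1/2}$ interpolation gain to absorb the $\nu^{-1/2}$ loss coming from the global bound and produce an $O(1)$ constant; provided $\nu_0$ is chosen small, no further iteration is needed.
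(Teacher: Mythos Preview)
Your proof is correct and follows essentially the same approach as the paper: both arguments introduce a chain of three nested cutoffs avoiding $-e$, apply \cref{thm:x-hypo} twice, use the interpolation $\nu^2 t^3 \le \nu^{1/2}\cdot \nu t^2$ on $[0,\nu^{-1/2}]$, and close with the crude $O(\nu^{-1/2})$ bound from \cref{thm:control-x-overall} so that the $\nu^{\pm 1/2}$ factors cancel. The only differences are cosmetic (your $A_i,B_i$ notation versus the paper's explicit multiplication by $\nu^{1/2}$).
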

\begin{proof}
  Consider nested cutoffs $\chi, \chi', \chi'' : \bbS^2 \to [0,1]$
  such that \(\chi'=1\) on \(\supp \chi\) and \(\chi''=1\) on
  \(\supp \chi'\) and \(\chi''\) excludes \(-e\).

  In a first step, applying \cref{thm:x-hypo} with $\chi'$ and
  \(\chi''\) yields in particular with the factor $\nu^{1/2}$ the
  bound
  \begin{align*}
    \int_0^{ \nu^{-1/2}} \nu^{3/2} t^2 \| \nabla(p\cdot e)  \otimes  X
    \chi' \|^2
    & \le C \nu^{1/2} \|\psi^{in}\|_{H^1}^2
      + C  \nu^{1/2} \int_0^{\nu^{-1/2}}  \nu^2 t^3 \| \nabla(p\cdot e)  \otimes  X \chi'' \|^2 \\
    & \le C \nu^{1/2} \|\psi^{in}\|_{H^1}^2
      + C \nu \int_0^{\nu^{1/2}}  \| X\|^2
      \le C' \|\psi^{in}\|_{H^1}^2
  \end{align*}
  where the last inequality comes from
  \cref{thm:control-x-overall}. Hence we gain the control
  \begin{align*}
    \int_0^{ \nu^{-1/2}} \nu^2 t^3 \| \nabla(p\cdot e)  \otimes  X \chi' \|^2
    \le   \int_0^{ \nu^{-1/2}} \nu^{3/2} t^2 \| \nabla(p\cdot e)
    \otimes  X \chi' \|^2
    \le C' \|\psi^{in}\|_{H^1}^2,
  \end{align*}
  so that the corollary now follows by another application of
  \cref{thm:x-hypo}.
\end{proof}
We now come to the proof of \cref{thm:x-hypo}.
\begin{proof}[Proof of \cref{thm:x-hypo}]
  The basic idea is to reproduce the hypocoercive estimates of
  \cref{sub:enhancedPS}, where we have additional contributions from
  the cut-off $\chi$ and the remainder $R$. We rewrite
  \eqref{eq:evo-x} as
  \begin{equation} \label{eq:XR}
    (\partial_t + i p\cdot e - \nu \Delta ) X + \nu X = R.
  \end{equation}
  We find
  \begin{equation*}
    \begin{aligned}
      \frac 12 \ddt \| X \chi\|^2 + \nu \|X \chi\|^2
      &= \nu \Re \l X \chi, \Delta X \chi\r
      + \Re \l X \chi, R \chi\r \\
      &\le - \nu \| \nabla X \chi\|^2
      + 2 \nu \| \nabla X \chi\|\, \| \nabla \chi \otimes X \|  + \Re \l X \chi, R \chi\r.
    \end{aligned}
  \end{equation*}
  For the last term, we use the explicit expression \eqref{defi:R} of
  $R$ and integrate by parts to find
  \begin{align*}
    \Re \l X \chi, R \chi\r
    & \le 2 |\beta| \nu \big( 2 \|\nabla \chi X\|  +  \| \chi \nabla X \| \big)
      \|(1- p \cdot e) \psi \chi\|
      \le C  \nu |\beta| \big(  \|\nabla \chi X\|  +  \| \chi \nabla X \| \big)
      \| \nabla (p \cdot e) \psi \|.
  \end{align*}
  It follows from Young's inequality that
  \begin{equation}
    \label{eq:x-l2}
    \begin{aligned}
      \frac 12 \ddt \| X \chi\|^2 +  \frac{\nu}{2} \| \nabla X \chi\|^2 \le  E_1(t),
    \end{aligned}
  \end{equation}
  where, for some absolute constant $C$,
  \begin{equation}
    E_1(t) := C \big( \nu \|X \|^2 + \nu |\beta|^2 \| \nabla (p \cdot e) \psi \|^2 \big).
  \end{equation}
  For the gradient, we find, using the commutation
  $\nabla \Delta X = \Delta \nabla X + O(|\nabla X|)$, see
  \cref{app:covariant}:
  \begin{equation*}
    \begin{aligned}
      \frac 12 \ddt \| \nabla X \chi\|^2 + \nu \|\nabla X \chi\|^2
      &= \nu \Re \l \nabla X \chi, \nabla \Delta X \chi\r
      - \Re \l  \nabla X \chi, i \nabla(p\cdot e) \otimes X \chi\r
      + \Re \l \nabla X \chi, \nabla R \chi\r \\
      &\le - \nu \| \nabla \nabla X \chi\|^2
      + 2 \nu    \| \nabla \chi  \otimes \nabla  X \| \| \nabla \nabla X \chi\|\, +  C \nu \| \nabla X \chi\|^2 \\
      &\quad +  \| \nabla X \chi\| \|\nabla(p\cdot e) \otimes   X \chi\|
      + \| \Delta X \chi\| \, \| R \chi\|
      +2 \| \nabla X \chi\| \, \| R \otimes  \nabla \chi\|.
    \end{aligned}
  \end{equation*}
  Hence, by Young's inequality,
  \begin{equation}
    \label{eq:x-grad}
    \begin{aligned}
      \frac 12 \ddt \| \nabla X \chi\|^2 +  \frac{\nu}{2} \| \nabla \nabla X \chi\|^2
      & \le    \| \nabla X \chi\| \|\nabla(p\cdot e)  \otimes X \chi\|   + E_2(t),
    \end{aligned}
  \end{equation}
  where, for some absolute constant $C$,
  \begin{equation}
    E_2(t) := C \left( \nu \|\nabla X\|^2 + \frac{1}{\nu} \|R \chi'\|^2\right).
  \end{equation}
  For the mixed term we find
  \begin{equation*}
    \begin{aligned}
      &\ddt \Re \l i \nabla(p\cdot e) \otimes X \chi, \nabla X \chi\r + 2 \nu \Re \l i \nabla(p\cdot e) \otimes X \chi, \nabla X \chi\r
      + \| \nabla(p\cdot e) \otimes X \chi \|^2 \\
      &= \nu \Re \l i \nabla(p\cdot e)  \otimes \Delta X \chi, \nabla X
      \chi\r
      + \nu \Re \l i \nabla(p\cdot e) \otimes X \chi, \nabla \Delta X
      \chi\r\\
      &\quad
        + \Re \l i \nabla(p\cdot e) \otimes  R \chi, \nabla X \chi\r
        + \Re \l i \nabla(p\cdot e) \otimes X \chi, \nabla R \chi\r
      \\
      &\le C_0 \nu \| \Delta X \chi\|
      \left(
        \| \nabla(p\cdot e) \otimes \nabla X \chi\|
        + \| X \chi\| + \| \nabla \chi \cdot \nabla(p\cdot e) X \|
      \right)\\
      &\quad
      + C_0 \left( \| \nabla (\nabla(p\cdot e) \otimes   X)  \chi\|
        + \| X \chi\| \right) \| R \chi\|
      + C_0 \| \nabla(p\cdot e) \otimes   X \chi\|\, \| \nabla \chi \otimes   R \|.
    \end{aligned}
  \end{equation*}
  It follows by Young's inequality that
  \begin{equation*}
    \begin{aligned}
      &\ddt \Re \l i \nabla(p\cdot e) \otimes X \chi, \nabla X \chi\r
      + \| \nabla(p\cdot e) \otimes X \chi \|^2 \\
      &\le C_0 \nu \| \Delta X \chi\|
      \left(
        \| \nabla \big(\nabla  (p\cdot e) \otimes X \big) \chi\|
        + \| X \chi\|   \right)  + E_3(t),
    \end{aligned}
  \end{equation*}
  where
  \begin{equation}
    \begin{aligned}
      E_3(t)
      & :=   C_0 \nu \| \Delta X \chi\|
      \| \nabla \chi \cdot \nabla(p\cdot e) X \|
      + C_0 \left( \| \nabla\cdot(\nabla(p\cdot e) \otimes   X)  \chi\|
        + \| X \chi\| \right) \| R \chi'\|.
    \end{aligned}
  \end{equation}
  Finally, we find using the explicit calculation of
  \cref{thm:laplace-mixed-term} that
  \begin{equation*}
    \begin{aligned}
      &\frac 12 \ddt \| \nabla(p\cdot e) \otimes X \chi \|^2
      + \nu \|\nabla(p\cdot e) \otimes X \chi\|^2  \\
      &=  \nu \Re \l \nabla(p\cdot e) \otimes X \chi ,
      \Delta(\nabla(p\cdot e) \otimes X) \chi \r
      + \nu \Re \l \nabla(p\cdot e) \otimes X \chi ,
      \nabla(p\cdot e) \otimes X \chi \r \\
      &\quad
      + 2 \nu \Re \l \nabla(p\cdot e) \otimes X \chi ,
      (p\cdot e) \nabla X  \chi \r
      + \Re \l \nabla(p\cdot e) \otimes X \chi, \nabla(p\cdot e) \otimes R \chi \r.
    \end{aligned}
  \end{equation*}
  For the contribution of \(R\), use the explicit expression of \(R\)
  and the definition of \(X\) to find that
  \begin{equation*}
    \begin{aligned}
      \nabla(p\cdot e) \otimes R
      &= \nabla(p\cdot e) \otimes 2 i \beta \nu \nabla( [p\cdot e -1] \psi )\\
      &= 2 i \nu \nabla( [p\cdot e - 1] (X - \alpha \nabla \psi))
        - 2 i \beta \nu \nabla^2(p\cdot e) [p\cdot e - 1]\psi,
    \end{aligned}
  \end{equation*}
  so that
  \begin{equation*}
    \begin{aligned}
      &\Re \l \nabla(p\cdot e) \otimes X \chi, \nabla(p\cdot e) \otimes R \chi \r\\
      &\le C \nu
      \Big( \| \nabla(\nabla(p\cdot e) \otimes X) \chi \|
      + \| \nabla(p\cdot e) \otimes X \chi \| \Big)
      \Big(
      \| \nabla(p\cdot e) \otimes X  \chi'\|
      + |\alpha| \| \nabla(\nabla(p\cdot e) \psi) \|
      + |\alpha| \| \psi \|
      \Big) \\
      &\quad+ C \nu |\beta| \| \nabla(p\cdot e)  \otimes X  \chi \|
      \| \nabla(p \cdot e) \psi \|.
    \end{aligned}
  \end{equation*}
  Hence we find
  \begin{align*}
    &\frac 12 \ddt \| \nabla(p\cdot e) \otimes X \chi \|^2
    + \nu \|\nabla(p\cdot e) \otimes X \chi\|^2\\
    &\qquad\qquad   \le  - \nu \| \nabla (\nabla(p\cdot e) \otimes X) \chi\|^2  + 2 \nu \|\nabla(p\cdot e) \otimes X \chi \|^2   + C_1 \nu \|X \chi\|^2  + E_4(t),
  \end{align*}
  where
  \begin{equation*}
    \begin{aligned}
      E_4(t)
      &:= C_1 \nu \| \nabla ( \nabla(p\cdot e) \otimes X )
        \chi\|
        \Big(
        \| \nabla(p\cdot e) \otimes X \chi' \|
        + |\alpha| \| \nabla(\nabla(p\cdot e) \psi) \|
        + |\alpha| \| \psi \|
        \Big)\\
      &\quad +  C_1 \nu \| \nabla(p\cdot e) \otimes X  \chi \|
        \Big(
        \| X \|
        +
        \| \nabla(p\cdot e) \otimes X \chi' \|
        + |\alpha| \| \nabla(\nabla(p\cdot e) \psi) \|
        + |\alpha| \| \psi \|
        \Big)\\
      &\quad + C_1 \nu |\beta| \| \nabla(p\cdot e) \otimes X \chi \|
        \| \nabla(p \cdot e) \psi \|.
    \end{aligned}
  \end{equation*}
  From here, we can proceed exactly as in the proof of
  \cref{thm:hypo-constants}: for suitable positive constants
  $a,b,c,\nu_0$, for all $\nu \le \nu_0$, an energy of the form
  \begin{align*}
    E[X] :=  E[\psi] =
    \frac12 \Big[\|X\chi\|^2+a \nu t \|\nabla X \chi\|^2
    +2b \nu t^2  \Re \l i \nabla(p\cdot e) \otimes X  \chi ,\nabla X  \chi \r
    + c \nu t^3 \| \nabla(p \cdot e) X \chi \|^2\Big]
  \end{align*}
  is both coercive
  \begin{equation*}
    E[X] \ge  \frac12 \Big[\|X\|^2+ \frac{a}{2} \nu t \|\nabla X
    \chi\|^2
    + \frac{c}{2} \nu t^3 \| \nabla(p \cdot e) X \chi \|^2\Big]
  \end{equation*}
  and satisfies for times \(t\le \nu^{-1}\) the inequality
  \begin{equation} \label{estim_E(X)}
    \begin{aligned}
      &    \ddt E[X]
      + \frac{\nu}{2} \| \nabla X \chi\|^2
      +  \frac{a\nu^2 t}{2} \| \nabla \nabla X \chi \|^2
      + \frac{b\nu t^2}{2}
      \| \nabla(p\cdot e) \otimes X \chi\|^2
      + \frac{c \nu^2 t^3}{2}
      \| \nabla(\nabla(p\cdot e) \otimes X) \chi\|^2 \\
      & \le C \nu^2 t^3 \| X \chi\|^2  + E_1(t) + a \nu t E_2(t) + b \nu t^2 E_3(t) + c \nu t^3 E_4(t).
    \end{aligned}
  \end{equation}
  Thanks to \eqref{inequality_integral_psi} and
  \cref{thm:control-x-overall}, we have for all $t_* \le \nu^{-1}$
  that
  \begin{equation} \label{E1E2}
    \int_0^{t_*}  E_1 + \int_0^{t_*} \nu t E_2
    \lesssim \sup_{[0,t_*]} \Big(|\alpha|^2 +
    \frac{|\beta|^2}{t^2}\Big)
    (1+\nu^2 t_*^4)\| \psi^{in} \|^2.
  \end{equation}
  In particular,
  \begin{equation*}
    \int_0^{\nu^{-1/2}}  E_1 +  \int_0^{\nu^{-1/2}} \nu t E_2 \lesssim 1.
  \end{equation*}
  Regarding $E_3$, we have by Young's inequality that for all $\kappa > 0$:
  \begin{equation*}
    \begin{aligned}
      \nu t^2  E_3
      & \le  \kappa \nu^2 t \| \nabla \nabla X \|^2
        + C_\kappa  \nu^2 t^3 \| \nabla(p\cdot e) \otimes   X \chi'\|^2  \\
      &\quad + \kappa \nu^2 t^3  \| \nabla (\nabla(p\cdot e) \otimes
        X)  \chi\|^2
        + \kappa \nu^2 t^3 \|X \chi\|^2 + C_\kappa  t \|R \chi'\|^2  \\
      & \le  \kappa \big(  \nu^2 t \| \nabla \nabla X \|^2
        + \nu^2 t^3  \| \nabla (\nabla(p\cdot e) \otimes   X)  \chi\|^2  \big)  \\
      &\quad + C_\kappa \nu^2 t^3   \| \nabla(p\cdot e) \otimes   X
        \chi'\|^2
        + \nu^2 t^3 \|X \chi\|^2  +  \tilde{E}_3,
    \end{aligned}
  \end{equation*}
  where still using \cref{thm:control-x-overall}, we have for all
  $t_* \le \nu^{-1}$,
  \begin{equation} \label{tildeE3}
    \int_0^{t}  \tilde{E}_3 \lesssim   \int_0^{t_*} t \|R\chi'\|^2 \dd t
    \lesssim
    \sup_{[0,t_*]} \Big(|\alpha|^2 + \frac{|\beta|^2}{t^2}\Big)  (1+\nu^2 t_*^4)\| \psi^{in} \|^2.
  \end{equation}
  In particular,
  \begin{equation*}
    \int_0^{\nu^{-1/2}}  \tilde{E}_3 \lesssim   \int_0^{\nu^{-1/2}} t
    \|R \chi'\|^2 \lesssim 1.
  \end{equation*}
  Regarding $E_4$, we estimate for all $t \le \nu^{-1}$
  \begin{equation*}
    \begin{aligned}
      \nu t^3  E_4  & \le \kappa \nu^2 t^3 \| \nabla ( \nabla(p\cdot
      e) \otimes X ) \chi\|^2\\
      &\quad + C_\kappa \nu^2 t^3 \Big(
      \| \nabla(p\cdot e) \otimes   X \chi'\|^2
      + |\alpha|^2 \| \nabla(\nabla(p\cdot e) \psi) \|^2
      + |\alpha|^2  \| \psi \|^2
      \Big)
      \\
      & \quad + \kappa \nu t^2  \| \nabla(p\cdot e) \otimes X  \chi
      \|^2 + C_\kappa \nu^3 t^4 \|X\|^2
      + C_\kappa \nu^3 t^4 |\beta|^2 \| \nabla(p\cdot e) \psi \|^2
      \\
      & \le \kappa \big( \nu^2 t^3 \| \nabla ( \nabla(p\cdot e) \otimes X ) \chi\|^2  + \nu t^2  \| \nabla(p\cdot e) \otimes X  \chi \|^2 \big) + C_\kappa \nu^2 t^3 \| \nabla(p\cdot e) \otimes   X \chi'\|^2 +  \tilde{E}_4,
    \end{aligned}
  \end{equation*}
  where, for all $t_* \le \nu^{-1}$
  \begin{equation} \label{tildeE4}
    \begin{aligned}
      \int_0^{t_*}  \tilde{E}_4
      &  \lesssim  \nu^2 t_*^4 \int_0^{t_*} \nu \|X\|^2 \\
      & \quad +   \int_0^{t_*} \Big(  \nu^2 t^3 |\alpha|^2 \|
        \nabla(\nabla(p\cdot e) \psi) \chi' \|^2  + \nu^2 t^3 |\alpha|^2 \|
        \psi \chi' \|^2 + \nu^3 t^4 |\beta|^2 \| \nabla(p\cdot e) \psi \|^2
        \Big)\\
      &\lesssim \sup_{[0,t_*]} \Big(|\alpha|^2 + \frac{|\beta|^2}{t^2}\Big)  (1+\nu^4 t_*^8)\| \psi^{in} \|^2.
    \end{aligned}
  \end{equation}
  Here we used in the last step \eqref{inequality_integral_psi} and
  \cref{thm:control-x-overall}. In particular, this shows
  \begin{equation*}
    \int_0^{\nu^{-1/2}}  \tilde{E}_4    \lesssim 1.
  \end{equation*}
  For $\kappa$ small enough independently of $\nu$, and for all
  $t \le \nu^{-1}$, we get
  \begin{equation} \label{estim_prelim_E(X)}
    \begin{aligned}
      &    \ddt E[X]
      + \frac{\nu}{4} \| \nabla X \chi\|^2
      +  \frac{a\nu^2 t}{4} \| \nabla \nabla X \chi \|^2
      + \frac{b\nu t^2}{4}
      \| \nabla(p\cdot e) \otimes X \chi\|^2
      + \frac{c \nu^2 t^3}{4}
      \| \nabla(\nabla(p\cdot e) \otimes X) \chi\|^2 \\
      &\le C' \nu^2 t^3 \| X \chi\|^2
        + C'  \nu^2 t^3 \| \nabla(p\cdot e) \otimes   X \chi'\|^2
        + E_1(t) + a \nu t E_2(t) + b \tilde{E}_3(t) + c \tilde{E}_4(t).
    \end{aligned}
  \end{equation}
  The desired inequality follows by time integration from $0$ to
  $\nu^{-1/2}$, using the bounds on
  $E_1, E_2, \tilde{E}_3, \tilde{E}_4$.
\end{proof}

To obtain a good decay rate for the velocity field of our linearized
model, we will need to control also the \(L^\infty\) norm of $\psi$
near the pole. This is the purpose of the next lemma.
\begin{lemma}\label{thm:x-infty}
  There exist absolute constants $C > 0$ and $\nu_0 > 0$ such that for
  all $\nu \le \nu_0$ and \(t \le \nu^{-1/2}\) it holds that
  \begin{equation*}
    |X(t,p)|^2     \le  \|\nabla \psi^{in}\|^2_{L^\infty} + C
    \left([1-p\cdot e]^3 \nu t^4  + [1-p\cdot e]^2 \nu^2 t^5
      +  [1-p\cdot e] \nu t^2 +  1 \right) \|\psi^{in}\|_{L^\infty}^2.
  \end{equation*}
\end{lemma}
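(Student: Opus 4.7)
The plan is to establish the pointwise bound on $|X|^2$ via a parabolic comparison argument on $\bbS^2$, with a supersolution built as a polynomial in $W := 1 - p\cdot e$ with time-dependent coefficients tailored to the four terms in the stated bound. The starting point is the evolution equation \eqref{eq:XR} for $X$: multiplying by $\bar X$, taking real parts, and using $\Delta|X|^2 = 2\Re(\bar X\Delta X) + 2|\nabla X|^2$, one obtains the pointwise identity
$$(\partial_t - \nu\Delta)|X|^2 + 2\nu|X|^2 + 2\nu|\nabla X|^2 = 2\Re(\bar X R),$$
which yields the differential inequality $(\partial_t - \nu\Delta)|X|^2 \le 2|X|\,|R|$ on $\bbS^2$.

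Next I would obtain a sharp pointwise bound on the source. The standard maximum principle applied to $|\psi|^2$ (which is a subsolution of the heat equation, directly from \eqref{eq:ADEfourier}) gives $|\psi(t,p)| \le P := \|\psi^{in}\|_\infty$. Solving the definition of $X$ for $\nabla\psi$, namely $\nabla\psi = (X - i\beta\nabla(p\cdot e)\psi)/\alpha$, and using $|\alpha| \gtrsim 1$, $|\beta| \lesssim t$ for $t \le \nu^{-1/2}$, together with the identity $|\nabla(p\cdot e)|^2 = W(2-W) \le 2W$, the explicit formula $R = 2i\beta\nu[(p\cdot e - 1)\nabla\psi + \nabla(p\cdot e)\psi]$ yields
$$|R(t,p)| \le C\nu t\left[W\,|X| + \sqrt W\,P + t\,W^{3/2}\,P\right].$$

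Third, I would seek a supersolution of the form
$$\Phi(t,p) = A(t) + B_1(t)\,W + B_2(t)\,W^2 + B_3(t)\,W^3,$$
with $A(0) = \|\nabla\psi^{in}\|_\infty^2$ (so that $\Phi(0,p) \ge |X^{in}(p)|^2 = |\nabla\psi^{in}(p)|^2$) and the ansatz $B_1(t) \sim P^2\nu t^2$, $B_2(t) \sim P^2\nu^2 t^5$, $B_3(t) \sim P^2\nu t^4$. Using the spherical identities
$$\Delta W = 2 - 2W, \quad \Delta W^2 = 8W - 6W^2, \quad \Delta W^3 = 18W^2 - 12W^3,$$
$(\partial_t - \nu\Delta)\Phi$ becomes an explicit polynomial in $W$. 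Applying Young in the form $\sqrt W\,P\,|X| \le \tfrac12(W|X|^2 + P^2)$ and $tW^{3/2}P|X| \le \tfrac12(t^2W^3|X|^2 + P^2)$, substituting the bootstrap $|X|^2 \le \Phi$ into $2|X||R|$, and matching coefficients of $W^0, W, W^2, W^3$, I would show that the inequality $(\partial_t - \nu\Delta)\Phi \ge 2|X||R|$ closes for $t \le \nu^{-1/2}$ once the constants in $B_i$ are taken large enough. The parabolic comparison principle on $\bbS^2$ then forces $|X(t,p)|^2 \le \Phi(t,p)$, which is exactly the statement of the lemma.

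The main obstacle is the coefficient-matching step. The delicate point is that the bootstrap term $W|X|^2 \le W\Phi$ injects a contribution proportional to $\nu t A W$ into the right-hand side of the $W^1$ balance; since $A$ contains $\|\nabla\psi^{in}\|_\infty^2$, this threatens to force $\|\nabla\psi^{in}\|_\infty^2$ into the $W^1$ coefficient of the final bound, which is not allowed. The rescue is that on the window $t \le \nu^{-1/2}$ one has $\nu t^2 \le 1$ and $W \le 2$, so the quantity $\nu t^2 W\|\nabla\psi^{in}\|_\infty^2$ is uniformly controlled by the constant $\|\nabla\psi^{in}\|_\infty^2$ and can be absorbed at the $W^0$ level through a moderate growth of $A(t)$. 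The same mechanism — using $\nu t^2 \le 1$ and $W \le 2$ to dominate each iterated cross term — closes the inequalities at higher powers of $W$ and explains the precise exponents $\nu t^2$, $\nu^2 t^5$, $\nu t^4$ appearing in the statement.
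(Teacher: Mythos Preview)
Your overall strategy---derive a pointwise differential inequality for $|X|^2$, build a supersolution that is polynomial in $W=1-p\cdot e$, and close by the parabolic maximum principle---is exactly the paper's. The paper starts from \eqref{eq:evo-x-factor} rather than \eqref{eq:XR}; since \eqref{eq:evo-x-factor} is obtained from \eqref{eq:XR} precisely by the substitution $\nabla\psi=(X-i\beta\nabla(p\cdot e)\psi)/\alpha$ that you perform by hand, the two starting points are equivalent. The paper then absorbs the resulting $\nu t\,|X|^2$ feedback by the integrating factor $\e^{-c_0\nu t^2}$, which is bounded above and below on $[0,\nu^{-1/2}]$; your ``moderate growth of $A(t)$'' is the same device in different clothing.

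There is, however, a genuine error in your Young splitting. You write $tW^{3/2}P\,|X|\le\tfrac12\bigl(t^2W^3|X|^2+P^2\bigr)$, which after multiplying by $\nu t$ produces a feedback term $\nu t^3 W^3|X|^2$. On $[0,\nu^{-1/2}]$ the coefficient $\nu t^3$ reaches $\nu^{-1/2}$, so neither a bootstrap $|X|^2\le\Phi$ nor any integrating factor can absorb it: the required factor would be $\e^{-c\nu t^4}$, which degenerates to $\e^{-c\nu^{-1}}$. Concretely, inserting the bootstrap generates at order $W^3$ a term $\nu t^3 A$ with $A\ge\|\nabla\psi^{in}\|_\infty^2$, forcing $B_3\gtrsim\nu t^4\|\nabla\psi^{in}\|_\infty^2$, which both contradicts the stated bound and feeds back again with diverging powers of $t$.

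The fix is to put only weight $\nu t$ on $|X|^2$ in every Young step:
\[
\nu t^2 W^{3/2}P\,|X|\le \tfrac12\nu t\,|X|^2+\tfrac12\nu t^3 W^3P^2,
\qquad
\nu t\sqrt{W}\,P\,|X|\le \tfrac12\nu t\,|X|^2+\tfrac12\nu t\,W\,P^2.
\]
Combined with your first feedback term $\nu t W|X|^2\le 2\nu t|X|^2$, the total $|X|^2$ coefficient is $O(\nu t)$, absorbed by $\e^{-c_0\nu t^2}$, and the remaining source $\nu t^3W^3P^2+\nu tWP^2$ is exactly what the paper's supersolution
\[
f=\bigl(a_0\nu t^4 W^3+a_1\nu^2 t^5 W^2+a_2\nu t^2 W+a_3\bigr)P^2
\]
is built to dominate. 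With this correction your argument and the paper's coincide.
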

\begin{proof}
  Using the Leibniz rule for the covariant derivative, we have
  \begin{equation*}
    \frac{1}{2} \Delta |X|^2 =   g (\Delta X, X ) + |\nabla X|^2,
  \end{equation*}
  where $\Delta X$ still refers to the connection Laplacian of the
  vector field $X$ and where $|X| = \sqrt{g(X,X)}$ is the usual norm
  induced by the metric on tensors.  Back to \eqref{eq:evo-x-factor},
  it follows that
  \begin{equation} \label{eq_X2}
    (\partial_t - \nu \Delta) \frac{|X|^2}{2} + \nu |\nabla X|^2
    + \nu |X|^2
    = g (-\frac{2\beta^2}{\alpha} \nu
    [p \cdot e - 1] \nabla(p\cdot e) \psi   + 2 i \beta \nu
    \nabla(p\cdot e) \psi, X )
  \end{equation}
  so that
  \begin{equation*}
    \begin{aligned}
      (\partial_t - \nu \Delta) \frac{|X|^2}{2}
      & \le  \nu t |X|^2
        + C \nu \frac{|\beta|^4}{|\alpha|^2 t^4} t^3
        [1- p \cdot e]^3 |\psi|^2
        +  C \frac{|\beta|^2}{t^2} \nu t [1- p \cdot e] |\psi|^2 \\
      & \le  \nu t |X|^2 + C'  \Big(\nu  t^3 [1-p\cdot e]^3
        +   \nu t [1-p\cdot e]\Big) \|\psi^{in}\|_{L^\infty}^2,
    \end{aligned}
  \end{equation*}
  where the latter inequality holds for all $t \le \nu^{-1/2}$. It is
  then easy to find absolute constants $c_0 > 0$, $\nu_0 > 0$,
  $a_0, a_1, a_2, a_3 > 0$, such that for all $\nu \le \nu_0$, the
  function
  \begin{equation*}
    f = \Big( a_0 \nu  t^4 [1-p\cdot e]^3
    + a_1 \nu^2 t^5 [1-p\cdot e]^2
    + a_2 \nu t^2 [1-p\cdot e] + a_3 \Big) \|\psi^{in}\|_{L^\infty}^2
  \end{equation*}
  satisfies
  $$
  (\partial_t - \nu \Delta) \Big( \e^{-c_0 \nu t^2} |X|^2 -  f \Big)
  \le  0   \qquad  \forall  t \le \nu^{-1/2}.
  $$
  The lemma then follows from the maximum principle for the scalar
  heat flow on the sphere.
\end{proof}

The estimates on \(X=J_\nu \psi\) can roughly yield a decay like
\(t^{-1}\). For the sharp decay of \(t^{-2}\) for the velocity field,
we need to iterate and thus also need a control of
$J_\nu X = \alpha \nabla X + i \beta \nabla (p \cdot e) \otimes
X$. From \eqref{eq:evo-x}, we get
\begin{equation*}
  \Big(\partial_t + i (p\cdot e) - \nu \Delta \Big) J_\nu X + \nu
  J_\nu X
  = - \nu  \alpha [\Delta, \nabla] X
  - i \beta \nu \nabla (p \cdot e) \otimes X
  + 2 i \beta \nu \nabla \big( [p \cdot e - 1] X )
  + J_\nu R,
\end{equation*}
where we recall \(R=2i\beta\nu \nabla([p\cdot e-1] \psi)\). By Ricci's
formula, see \cref{app:covariant}, one has
$[\Delta, \nabla] X = \mathcal{R} \nabla X$ for some tensor
$\mathcal{R}$ so that
\begin{equation*}
  - \nu  \alpha [\Delta, \nabla] X
  = - \nu  \mathcal{R} J_\nu X
  + \nu i \beta \mathcal{R}  \nabla (p \cdot e) \otimes X
\end{equation*}
and eventually
\begin{equation}\label{eq:evo-jx}
  \Big(\partial_t + i (p\cdot e) - \nu \Delta \Big) J_\nu X
  =  - \nu (\mathcal{R} + 1) J_\nu X
  + \nu i \beta (\mathcal{R} - 1)  \nabla (p \cdot e) \otimes X
  + 2 i \beta \nu \nabla \big( (p \cdot e - 1) X )   + J_\nu R.
\end{equation}
Using the previous estimates, we shall prove:
\begin{lemma}\label{thm:jx-control}
  There exists $\nu_0 > 0$, such that for all $0 < \nu \le \nu_0$ and
  cutoffs $\chi : \bbS^2 \to [0,1]$ such that $-e$ does not belong to
  the support of $\chi$, one can find $C > 0$ independent of $\nu$
  satisfying for all \(t \le \nu^{-1/2}\)
  \begin{equation*}
    \| J_\nu X(t)  \chi \|^2 \le C \|\psi^{in}\|_{H^2}^2.
  \end{equation*}
\end{lemma}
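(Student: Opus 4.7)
The plan is to iterate the hypocoercive vector-field argument of Lemma~\ref{thm:x-hypo} and Corollary~\ref{cor:localized} one level higher, treating $J_\nu X$ as the new unknown while $\psi$ and $X$ play the role of externally controlled data. Equation~\eqref{eq:evo-jx} has essentially the same structural form as \eqref{eq:evo-x} for $X$: a coercive absorption term $\nu J_\nu X$ on the left after rearrangement, a benign Ricci-type commutator $\nu(\mathcal{R}+1) J_\nu X$, and three forcing contributions, each of which is engineered to vanish at the north pole $p=e$ through factors of $\nabla(p\cdot e)$ or $p\cdot e - 1$, or comes from $J_\nu R$ with $R$ itself vanishing at $e$.

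Testing \eqref{eq:evo-jx} against $\chi^2 J_\nu X$ and taking the real part, with nested cutoffs $\chi \le \chi' \le \chi''$ excluding $-e$ (so that the $[\Delta,\chi]$ commutator is absorbed onto $\chi'$), yields a differential inequality of the form
\begin{equation*}
  \tfrac{1}{2} \tfrac{\dd}{\dd t}\|J_\nu X\,\chi\|^2 + \tfrac{\nu}{2}\|\nabla(J_\nu X)\,\chi\|^2 \lesssim \nu\|J_\nu X\,\chi'\|^2 + \mathrm{I} + \mathrm{II} + \mathrm{III},
\end{equation*}
where $\mathrm{I}, \mathrm{II}, \mathrm{III}$ denote the pairings against the three forcing terms of \eqref{eq:evo-jx}. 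The first two, using $|\beta|\lesssim t$ and Young's inequality, produce after time integration the bound
\begin{equation*}
  \int_0^{\nu^{-1/2}} (\mathrm{I}+\mathrm{II})\,\dd t \lesssim \int_0^{\nu^{-1/2}} \nu t^2 \bigl(\|\nabla(p\cdot e)\otimes X\,\chi'\|^2 + \|X\,\chi'\|^2\bigr)\,\dd t \lesssim \|\psi^{in}\|_{H^1}^2,
\end{equation*}
where the final inequality is furnished by Corollary~\ref{cor:localized} and Lemma~\ref{thm:control-x-overall}.

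The hardest term is $\mathrm{III}$. Expanding
\begin{equation*}
  J_\nu R = 2i\beta\nu\Bigl(\alpha\,\nabla^2\bigl([p\cdot e - 1]\psi\bigr) + i\beta\,\nabla(p\cdot e)\otimes \nabla\bigl([p\cdot e - 1]\psi\bigr)\Bigr)
\end{equation*}
and distributing the derivatives onto the vanishing factor via the Leibniz rule, the pairing $\l J_\nu R, \chi^2 J_\nu X\r$ reduces, after integrating by parts to move one derivative off the second-order piece, to expressions of the form $\nu|\beta|\bigl(\|(p\cdot e -1)\nabla^2\psi\,\chi'\| + \|\nabla(p\cdot e)\otimes\nabla\psi\| + \|\psi\|\bigr)\|J_\nu X\,\chi\|$, plus analogous pieces weighted by an extra $|\beta|\lesssim t$. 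After Young's inequality, the remaining time integrals involve precisely the quantities whose $L^2_t$ norms are controlled by the hypocoercive estimate \eqref{inequality_integral_psi} at $t_* = \nu^{-1/2}$, where $(1+\nu^2 t_*^4) = O(1)$. Integrating the differential inequality from $0$ to any $t\le\nu^{-1/2}$ and using the initial-value bound $\|J_\nu X(0)\chi\|^2 \lesssim \|\psi^{in}\|_{H^2}^2$ closes the argument. The main obstacle is exactly this time integrability: since $\nu\beta^2 \sim \nu t^2$, a naive second-derivative bound on $\psi$ would diverge on $[0,\nu^{-1/2}]$, and the crucial saving is that the vanishing factor $(p\cdot e - 1)$ sitting inside $R$ is precisely the weight that renders the relevant second-derivative norms of $\psi$ integrable in time through \eqref{inequality_integral_psi} — the same structural mechanism that motivated the design of $J_\nu$ in the first place, now invoked at the next level of the hierarchy.
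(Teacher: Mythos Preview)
Your overall strategy is correct, and terms I and II do close via Corollary~\ref{cor:localized} once you drop the spurious $\|X\chi'\|^2$ contribution (note that $\int_0^{\nu^{-1/2}}\nu t^2\|X\chi'\|^2\,\dd t$ is \emph{not} bounded by the results you cite; fortunately that term does not actually arise after using $|p\cdot e-1|\lesssim|\nabla(p\cdot e)|$ on $\supp\chi'$). The real gap is in term III. Expanding $J_\nu R$ directly in terms of $\psi$ does not close over $[0,\nu^{-1/2}]$: the piece $i\beta\,\nabla(p\cdot e)\otimes R = -2\nu\beta^2\,\nabla(p\cdot e)\otimes\nabla\bigl([p\cdot e-1]\psi\bigr)$ carries a prefactor $\nu\beta^2\sim\nu t^2$, and after Young's inequality you are left with integrals such as $\int_0^{\nu^{-1/2}}\nu t^4\|\nabla(p\cdot e)\psi\|^2\,\dd t$ or $\int_0^{\nu^{-1/2}}\nu t^2\|\nabla(\nabla(p\cdot e)\psi)\|^2\,\dd t$. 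Estimate~\eqref{inequality_integral_psi} only gives $\int\nu t^2\|\nabla(p\cdot e)\psi\|^2\lesssim 1$ and $\int\nu^2 t^3\|\nabla(\nabla(p\cdot e)\psi)\|^2\lesssim 1$, which are off by factors of $t^2\le\nu^{-1}$ and $(\nu t)^{-1}$ respectively. The vanishing factor $p\cdot e - 1$ inside $R$ is worth one power of $|\nabla(p\cdot e)|^2$, but that is already spent to get to these weighted norms; it cannot compensate the extra $\beta$ coming from applying $J_\nu$ again.

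The missing idea, which the paper uses, is to commute $J_\nu$ through and rewrite
\[
J_\nu R \;=\; 2i\nu\beta\,\nabla\bigl([p\cdot e-1]\,J_\nu\psi\bigr) \;+\; 2i\nu\beta\,[J_\nu,\nabla([p\cdot e-1]\,\cdot\,)]\psi
\;=\; 2i\nu\beta\,\nabla\bigl([p\cdot e-1]X\bigr) \;+\; R_2.
\]
The main piece now involves $X$, not $\psi$: after integration by parts and Young it produces $\nu|\beta|^2\|\nabla(p\cdot e)\otimes X\chi'\|^2$, whose time integral over $[0,\nu^{-1/2}]$ is exactly what Corollary~\ref{cor:localized} supplies. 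The commutator $R_2$ is genuinely lower order: it reduces to $2i\nu\alpha\beta\,\nabla(\nabla(p\cdot e)\psi)$ and $2\nu\beta^2(p\cdot e-1)\nabla(p\cdot e)\psi$, and (after one more use of $i\beta\nabla(p\cdot e)\psi = X-\alpha\nabla\psi$ on the latter) all contributions land in $\int\nu t^2\|\nabla(p\cdot e)\psi\|^2$, $\int\nu\|\nabla X\chi'\|^2$, and $\int\nu t^2\|\nabla(p\cdot e)\otimes X\chi'\|^2$, each bounded by the available estimates. In short, the hierarchy does not close using only $\psi$-level hypocoercivity~\eqref{inequality_integral_psi}; you must feed in the $X$-level hypocoercivity from Corollary~\ref{cor:localized}, and the algebraic device that makes this possible is the rewriting of $J_\nu R$ in terms of $X$.
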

\begin{proof}
  Let $\chi, \chi'$ such that $\chi' = 1$ on the support of $\chi$ and
  such that $-e$ does not belong to the support of $\chi'$. Performing
  an $L^2$ estimate on \eqref{eq:evo-jx}, we find after standard
  integrations by parts that
  \begin{equation*}
    \begin{aligned}
      \frac{1}{2}  \ddt \| J_\nu X \chi \|^2
      +  \nu \| \nabla J_\nu X \chi\|^2
      & \le 2 \nu   \|\nabla J_\nu X \chi\| \,
        \|J_\nu X \otimes \nabla \chi \|
        + C \nu  \| J_\nu X  \chi\|^2 \\
      &\quad + C \nu |\beta| \| \nabla(p\cdot e) \otimes  X  \chi \|  \| J_\nu X  \chi\|    \\
      &\quad + 2 \nu |\beta| \| (p\cdot e - 1) X  \chi \|
        \Big( \|\nabla J_\nu X  \chi\| +  2 \| J_\nu X  \otimes \nabla \chi\| \Big)\\
      &\quad + \Re \l J_\nu R \chi, J_\nu X \chi \r.
    \end{aligned}
  \end{equation*}
  It follows  that
  \begin{equation*}
    \frac{1}{2} \ddt \| J_\nu X \chi \|^2
    + \frac{\nu}{2} \| \nabla J_\nu X \chi\|^2
    \le C' \nu \|J_\nu X \chi'\|^2
    + C' \nu |\beta|^2  \| \nabla (p \cdot e) \otimes X  \chi \|^2
    + \l J_\nu R \chi, J_\nu X \chi \r.
  \end{equation*}
  We first notice that
  \begin{equation*}
    \begin{aligned}
      \nu \| JX \chi'\|^2  +   \nu \beta^2  \| \nabla (p \cdot e)
      \otimes X  \chi \|^2
      & \lesssim
        \nu |\alpha|^2 \| \nabla X \chi'\|^2
        + \nu |\beta|^2  \| \nabla(p\cdot e) \otimes X  \chi' \|^2.
    \end{aligned}
  \end{equation*}
  For the last term, we use that
  \begin{align*}
    J_\nu R
    & = 2 i \nu \beta J_\nu \nabla (  [p \cdot e - 1] \psi)
      = 2 i \nu \beta  \nabla (  [p \cdot e - 1] J_\nu \psi)
      + 2 i \nu \beta [J_\nu ,  \nabla (  [p \cdot e - 1] \cdot ) ] \psi \\
    & = 2 i \nu \beta  \nabla (  [p \cdot e - 1] X)
      + 2 i \nu \beta [J_\nu ,  \nabla (  [p \cdot e - 1] \cdot ) ] \psi
      =: R_1 + R_2.
  \end{align*}
  For the first part, we find by integration by parts that
  \begin{align*}
    \Re \l R_1 \chi  , J_\nu X \chi \r
    \le 2 \nu |\beta|\, \| (p\cdot e - 1) X  \chi \|
    \Big( \|\nabla J_\nu X  \chi\| +  2 \| J_\nu X  \otimes \nabla \chi\| \Big).
  \end{align*}
  This right-hand side was already encountered above and we bound it
  as
  \begin{align*}
    \Re \l R_1 \chi  , J_\nu X \chi \r
    \le \frac{\nu}{8} \| \nabla J_\nu X \chi\|^2
    + \nu \|   J_\nu X \chi' \|^2
    + C \nu |\beta|^2 \| \nabla (p \cdot e) \otimes X  \chi \|^2.
  \end{align*}
  For the other component, we further compute that
  \begin{equation*}
    R_2 =  2 i \nu \beta \alpha \nabla  \big( \nabla (p \cdot e) \psi
    \big)
    + 2 \nu \beta^2 \nabla (p \cdot e) (p \cdot e - 1) \psi
    := R_{2,1} + R_{2,2}.
  \end{equation*}
  One has
  \begin{align*}
    \Re \l R_{2,1} \chi  , J_\nu X \chi \r
    \le C \nu |\alpha \beta| \| \nabla (p \cdot e) \psi \chi \| \Big( \|\nabla
    J_\nu X  \chi\|
    +  \| J_\nu X  \otimes \nabla \chi\| \Big),
  \end{align*}
  so that one ends up with
  \begin{align*}
    \Re \l R_{2,1} \chi  , J_\nu X \chi \r
    \le \frac{\nu}{8} \| \nabla J_\nu X \chi\|^2
    + C \nu \|   J_\nu X \chi' \|^2
    + C \nu |\alpha \beta|^2  \| \nabla (p \cdot e) \psi \chi \|^2.
  \end{align*}
  Eventually, we notice that
  \begin{equation*}
    R_{2,2} = \frac{2}{i} \nu \beta (p \cdot e - 1)  (X  - \alpha \nabla \psi),
  \end{equation*}
  resulting after integrations by part in
  \begin{equation}
    \begin{aligned}
      \Re \l R_{2,2} \chi  , J_\nu X \chi \r
      & \le C \nu |\beta| \| (p \cdot e - 1) X \chi \| \|J_\nu
        X \chi\|
        + C \nu |\alpha \beta|  \| \nabla (p \cdot e) \psi \|
        \big(\| \nabla J_\nu X \chi \| +  \|J_\nu X \chi'\|\big) \\
      & \le \frac{\nu}{8} \| \nabla J_\nu X \chi\|^2
        + C \nu \| J_\nu X \chi' \|^2
        + C \nu |\beta|^2 \big(  \| \nabla (p \cdot e)  \otimes X \chi \|^2
        + |\alpha|^2 \|\nabla (p \cdot e) \psi \|^2 \big).
    \end{aligned}
  \end{equation}
  Hence, collecting all these bounds, we find for all $t \le \nu^{-1}$
  that
  \begin{equation}  \label{finalboundJ}
    \frac{1}{2} \ddt \| J_\nu X \chi \|^2
    + \frac{\nu}{8} \| \nabla J_\nu X \chi\|^2
    \lesssim E(t)
  \end{equation}
  where
  \begin{equation} \label{defiE(t)}
    E(t) :=
    \nu |\alpha|^2 \| \nabla X \chi'\|^2
    + \nu |\beta|^2  \| \nabla(p\cdot e) \otimes X  \chi' \|^2
    + \nu |\alpha \beta|^2 \|\nabla (p \cdot e) \psi \|^2.
  \end{equation}
  For $t \le \nu^{-1/2}$,
  \begin{equation*}
    E(t) \lesssim  \nu \| \nabla X \chi'\|^2
    + \nu t^2  \| \nabla(p\cdot e) \otimes X  \chi' \|^2
    + \nu t^2 \|\nabla (p \cdot e) \psi \|^2
  \end{equation*}
  and thanks to \cref{thm:x-hypo}, one has
  $\int_0^{\nu^{-1/2}} E(t)\dd t \le C$, independent of $\nu$. This
  yields the claimed estimate for \(J_\nu X\).
\end{proof}

\subsection{Mixing estimates} \label{sub:mixing}

We now show how the obtained estimates can be used to obtain the
mixing estimates up to time \(\nu^{-1/2}\) of
\cref{thm:vector-field-decay}.

\begin{proof}[Proof of \cref{thm:vector-field-decay} for times up to \(\nu^{-1/2}\)]
  We start with the proof of the first inequality. We split the
  integrand
  \begin{equation*}
    \int_{\bbS^2} \psi(t) F\, \dd p
    =  \int_{\bbS^2} \psi(t) F  \chi \, \dd p+  \int_{\bbS^2} \psi(t) F  (1-\chi)\, \dd p,
  \end{equation*}
  where $\chi$ is a smooth function which is $1$ near $e$ and $0$ near
  $-e$. By symmetry consideration, it is enough to show the decay for
  $\int_{\bbS^2} \psi(t) F \chi$. We introduce a cutoff
  \(\chi_{\epsilon}\) which is $1$ in a ball of radius \(\epsilon\)
  around the pole \(p=e\), zero outside a ball of radius $2\eps$, and
  satisfies $|\nabla \chi_\epsilon| \lesssim \epsilon^{-1}$. We write
  \begin{equation*}
    \begin{aligned}
      \int_{\bbS^2}  \psi F\chi \, \dd p
      & = \int_{\bbS^2}  \psi F \chi_\epsilon \chi \, \dd p+ \int_{\bbS^2}  \psi F (1- \chi_\epsilon) \chi \, \dd p=: I_1 + I_2.
    \end{aligned}
  \end{equation*}
  As $\|\psi\|_{L^\infty} \le \|\psi^{in}\|_{L^\infty}$, we get
  \begin{equation*}
    |I_1| \le C \epsilon^2  \|F\|_{L^\infty} \|\psi^{in}\|_{L^\infty}.
  \end{equation*}
  As regards $I_2$, we write
  \begin{equation}
    \begin{aligned}
      \int \psi F (1- \chi_\epsilon) \chi
      & =  \int \nabla(p\cdot e) \cdot \nabla(p\cdot e) \psi
        \frac{F}{|\nabla(p\cdot e)|^2} (1- \chi_\epsilon)  \chi \\
      & =  \frac{1}{i \beta}  \int \nabla(p\cdot e) \cdot (X-\alpha \nabla \psi)
        \frac{F}{|\nabla(p\cdot e)|^2} (1- \chi_\epsilon) \chi \\
      & =  \frac{1}{i \beta}  \int \nabla(p\cdot e) \cdot X   \frac{F}{|\nabla(p\cdot e)|^2} (1- \chi_\epsilon)  \chi \\
      &\quad +  \frac{\alpha}{i \beta}  \int  \psi \nabla \cdot  \left( \nabla(p\cdot e)    \frac{F}{|\nabla(p\cdot e)|^2} (1- \chi_\epsilon) \chi  \right) = I_{2,1} + I_{2,2}.
    \end{aligned}
  \end{equation}
 We introduce again spherical coordinates
$(\theta,\varphi)$, with colatitude $\theta \in (0, \pi)$ and longitude
$\varphi \in (0,2\pi)$, so that
$p = \sin \theta \cos \varphi\, e_x + \sin \theta \sin \varphi\, e_y +
\cos\theta\, e$, and $\nabla (p \cdot e) = - \sin \theta e_\theta$, while the surface measure on the sphere is $\dd s = \sin\theta \dd\theta \dd\varphi$.
  For the first term, we find  that
  \begin{equation*}
  \begin{aligned}
    |I_{2,1}|
  &  \le \frac{C}{|\beta|} \|F\|_{L^\infty}
    \int_{|\theta| \ge \epsilon} \frac{|X|}{|\sin \theta|} \chi \, \dd s
    \le \frac{C}{|\beta|} \|F\|_{L^\infty}
    \left( \int_{|\theta| \ge \epsilon} \frac{1}{|\sin \theta|^2}\,
      \dd s \right)^{\frac 12}  \|X \chi\| \\
 &  \le \frac{C'}{|\beta|} \|F\|_{L^\infty} \left(  \int_{|\theta| \ge \epsilon} \frac{1}{|\sin \theta|}\, \dd \theta \right)^{\frac 12}  \|X \chi\|     \le \frac{C''}{|\beta|} \|F\|_{L^\infty}
    \sqrt{|\ln \epsilon|}     \|X \chi\|   .
    \end{aligned}
  \end{equation*}
  In particular with \cref{cor:localized} this shows for times
  \(t \le \nu^{-1/2}\) that
  \begin{equation*}
    |I_{2,1}|
    \le \frac{C}{t} \|F\|_{L^\infty}  \sqrt{|\ln\epsilon|}\, \|\psi^{in}\|_{H^1}.
  \end{equation*}
  For the second term, we get
  \begin{align*}
    |I_{2,2}|
    &\le C \big|\frac{\alpha}{\beta}\big| \int_{|\theta| \ge \epsilon} |\psi| \left( |\nabla
      \chi_\epsilon| \frac{|F|}{|\sin\theta|} +  \frac{1}{|\sin\theta|^2} |F| + \frac{|\nabla F|}{|\sin\theta|} \right)
    \dd s \\
    &\le C' \big|\frac{\alpha}{\beta}\big|
      \|\psi^{in}\|_{L^\infty}  \bigg(\|F\|_{L^\infty}  \int_{2 \eps \ge  |\theta| \ge \epsilon} \frac{1}{\eps |\sin\theta|}  \dd s  + \|F\|_{L^\infty}  \int_{|\theta| \ge \epsilon}    \frac{1}{|\sin \theta|^2}  \dd s \\
      &  \hspace{3cm} + \|F\|_{H^1} \Big(  \int_{|\theta| \ge \epsilon} \frac{1}{|\sin \theta|^2} \dd s \Big)^{1/2} \bigg) \\
      & \le  C'' \big|\frac{\alpha}{\beta}\big|
      \|\psi^{in}\|_{L^\infty}  |\ln\epsilon|  \big(\|F\|_{L^\infty}  +  \|F\|_{H^1}\big) .
  \end{align*}
  In particular  for times
  \(t \le \nu^{-1/2}\) it shows that
  \begin{equation*}
    |I_{2,2}| \le \frac{C''}{t}
    |\ln\epsilon|\, \|\psi^{in}\|_{L^\infty}
    ( \|F\|_{L^\infty} + \|F\|_{H^1}).
  \end{equation*}
  Taking $\epsilon = \frac{1}{\sqrt{t}}$ yields the first inequality.

  As regards the second inequality, we write
  \begin{equation*}
    \begin{aligned}
      \int \psi F \nabla (p \cdot e)\chi
      &= \frac{1}{i\beta}
        \int  F (X-\alpha \nabla \psi) \chi
        = \frac{1}{i \beta}    \int  F X \chi + \frac{\alpha}{i \beta} \int \psi \nabla \cdot (F \chi) = J_1 + J_2.
    \end{aligned}
  \end{equation*}
  Applying the first inequality to the integral in $J_2$, we find for
  times \(t \le \nu^{-1/2}\) that
  \begin{equation*}
    |J_2| \le C  \frac{\sqrt{\ln(2+t)}}{t^2} \|\psi^{in}\|_{H^{1+\delta}} \|F\|_{H^{2+\delta}}.
  \end{equation*}
  The term $J_1$ is similar, except that $\psi$ is replaced by $X$, on
  which we have a weaker control. Using the same strategy as in the
  proof of the first inequality, we can write
  \begin{equation*}
    J_1 =  \frac{1}{i \beta} \int X F\chi
    = \frac{1}{i \beta}  \int X F \chi_\epsilon \chi
    + \frac{1}{i \beta}  \int X F (1- \chi_\epsilon) \chi
    =:  K_{1} + K_2.
  \end{equation*}
  We treat $K_1$ as $I_1$, except that we use \cref{thm:x-infty} as a
  substitute to the $L^\infty$ bound on $\psi$. We get
  \begin{equation*}
    |K_1| \le \frac{C}{t}
    \big( \epsilon^2 + \sqrt{\nu} t^2 \epsilon^5 + \nu t^{5/2} \epsilon^4 +
    \nu^{1/2} t \epsilon^3 \big)
    \|\psi^{in}\|_{W^{1,\infty}} \|F\|_{L^\infty}.
  \end{equation*}
  We then treat $K_2$ as $I_2$, resulting in
  \begin{equation}
    \begin{aligned}
      K_2  & =  \frac{1}{-\beta^2}  \int \nabla(p\cdot e) \cdot J_\nu X   \frac{F}{|\nabla(p\cdot e)|^2} (1- \chi_\epsilon)  \chi \\
           &\quad +  \frac{\alpha}{-\beta^2}  \int X \nabla \cdot  \left( \nabla(p\cdot e)    \frac{F}{|\nabla(p\cdot e)|^2} (1- \chi_\epsilon) \chi  \right) = K_{2,1} + K_{2,2}.
    \end{aligned}
  \end{equation}
  As before we find
  \begin{equation*}
    |K_{2,1}| \le
    \frac{C}{|\beta|^2} \|J_\nu X\| \sqrt{|\ln\epsilon|}
    \|F\|_{L^\infty},
  \end{equation*}
  which yields in particular for \(t \le \nu^{-1/2}\) that
  \begin{equation*}
    |K_{2,1}|
    \le \frac{C'}{t^2} \|\psi^{in}\|_{H^2} \|F\|_{L^\infty}
    \sqrt{|\ln\epsilon|}.
  \end{equation*}
  For  $K_{2,2}$, we further decompose it as
  \begin{align*}
    K_{2,2}
    & =  \frac{\alpha}{\beta^2}  \int X \nabla \chi_\epsilon \cdot \left( \nabla(p\cdot e)    \frac{F}{|\nabla(p\cdot e)|^2} \chi  \right) -  \frac{\alpha}{\beta^2}  \int X  (1-\chi_\epsilon)  \nabla \cdot \left( \nabla(p\cdot e)    \frac{F}{|\nabla(p\cdot e)|^2} \chi  \right) \\
    & =  \frac{\alpha}{\beta^2}  \int  X \nabla \chi_\epsilon \cdot \left( \nabla(p\cdot e)    \frac{F}{|\nabla(p\cdot e)|^2} \chi  \right)  \\
    &\quad -  \frac{\alpha}{i \beta^3}     \int J_\nu X (1-\chi_\epsilon)  \frac{\nabla (p \cdot e)}{|\nabla (p \cdot e)|^2} \nabla \cdot \left( \nabla(p\cdot e)
      \frac{F}{|\nabla(p\cdot e)|^2} \chi  \right)   \\
    &\quad - \frac{\alpha^2}{i \beta^3}   \int X \nabla \cdot \left( (1-\chi_\epsilon)  \frac{\nabla (p \cdot e)}{|\nabla (p \cdot e)|^2} \nabla \cdot \left( \nabla(p\cdot e)
      \frac{F}{|\nabla(p\cdot e)|^2} \chi  \right)  \right) =: H_1 + H_2 + H_3.
  \end{align*}
  For $H_1$ we obtain the bound
  \begin{equation*}
    |H_1| \le \frac{C}{\epsilon}\, \frac{|\alpha|}{|\beta|^2}\, \|F\|_{L^\infty}
    \int_{\epsilon \le |\theta| \le c \epsilon} |X|
  \end{equation*}
  and in particular for times \(t \le \nu^{-1/2}\) we get by
  \cref{thm:x-infty} that
  \begin{equation*}
    |H_1| \le \frac{C}{t^2 \epsilon}
    \big( \epsilon + \sqrt{\nu} t^2 \epsilon^4 + \nu t^{5/2}
    \epsilon^3 + \nu^{1/2} t \epsilon^2 \big)
    \|F\|_{L^\infty} \|\psi^{in}\|_{W^{1,\infty}}.
  \end{equation*}
  For $H_2$, we compute
  \begin{equation*}
    |H_2| \le C\, \frac{|\alpha|}{|\beta|^3} \|F\|_{W^{1,\infty}}
    \int_{|\theta| \ge \epsilon} \frac{|J_\nu X|}{|\sin(\theta)|^{2}}
    \,\dd\theta\, \dd \varphi
    \le \frac{C}{\epsilon^2}\, \frac{|\alpha|}{|\beta|^3} \|F\|_{W^{1,\infty}}
    \| J_\nu X \|.
  \end{equation*}
  In particular for \(t \le \nu^{-1/2}\) it shows by
  \cref{thm:jx-control} that
  \begin{equation*}
    |H_2| \le \frac{C'}{t^3 \epsilon^2}  \|F\|_{W^{1,\infty}} \|\psi^{in}\|_{H^2}.
  \end{equation*}
  Finally,
  \begin{align*}
    |H_3|
    \le C \frac{|\alpha|^2}{|\beta|^3} \int_{|\theta| \ge \epsilon} |X|
    \left(
      |\nabla \chi_\epsilon|
      \frac{(|F| + |\nabla F|)}{|\sin \theta|^2}
      + \frac{(|F| + |\nabla F|)}{|\sin \theta|^3}
      + \frac{|\nabla^2 F|}{|\sin \theta|}
    \right).
  \end{align*}
  In particular, for \(t \le \nu^{-1/2}\) we use \cref{thm:x-infty} to
  find
  \begin{equation*}
    |H_3|
    \le \frac{C}{t^3}
    \Big( \epsilon^{-2} + \sqrt{\nu} t^2 + \nu t^{5/2} (1+|\ln \epsilon|)  +
    \nu^{1/2} t \epsilon^{-1} \Big)
    \|\psi^{in}\|_{W^{1,\infty}}  \|F\|_{H^{2+\delta}}.
  \end{equation*}
  Taking $\epsilon = \frac{1}{\sqrt{t}}$ yields the second inequality.
\end{proof}

\subsection{Mixing estimates after time \(\nu^{-1/2}\)} \label{sub:mixing-long}

In this subsection, we prove the remaining estimates in
\cref{thm:vector-field-decay}, i.e.\ for times
\(t \in [\nu^{-1/2},c\nu^{-1/2}|\ln \nu|]\). The general idea is the
same and it has already been anticipated in the previous
subsections. The main difference is to notice that we cannot
approximate anymore $\alpha$ by $1$ and $\beta$ by $t$ but
that \(\alpha\) and \(\beta\) are exponentially growing in this time
scale.

\begin{proof}[Proof of \cref{thm:vector-field-decay} for times after
  \(\nu^{-1/2}\)]

  Performing the proof of \cref{thm:x-hypo}, we do not substitute
  \(\alpha\) and \(\beta\) in the last step but rather take out a
  supremum. As an analogue of \cref{cor:localized} this yields
  for all $t_* \le \nu^{-1}$ the bound
  \begin{equation} \label{X_long}
    \begin{aligned}
      &\sup_{t \in [0,t_*]}
        \left(  \| X \chi \|^2 + \nu t \| \nabla X \chi \|^2  + \nu t^3 \| \nabla (p \cdot e) \otimes X \chi \|^2
        \right) \\
      &+ \int_0^{t_*}
        \left(
        \nu \| \nabla X \chi \| + \nu^2 \| \nabla^2 X \chi \|^2
        + \nu t^2 \| \nabla(p\cdot e)  \otimes  X \chi \|^2
        + \nu^2 t^3 \| \nabla(\nabla(p\cdot e) \otimes X) \chi \|^2
        \right)\, \dd t  \\
      &  \le C \sup_{[0,t^*]} \Big(|\alpha|^2 +   \frac{|\beta|^2}{t^2}\Big)  (1+\nu^4 t_*^8)\| \psi^{in} \|_{H^1}^2.
    \end{aligned}
  \end{equation}
  Using this inequality in \eqref{finalboundJ}-\eqref{defiE(t)}, we
  find as an analogue of \cref{thm:jx-control} that for all
  $t_* \le \nu^{-1}$ the bound
  \begin{equation}  \label{JX_long}
    \sup_{t \in [0,t_*]} \|J_\nu X(t)\|^2
    \le C \sup_{[0,t^*]} \Big(|\alpha|^2  +
    \frac{|\beta|^2}{t^2}\Big)^2
    (1+\nu^4 t_*^8)\| \psi^{in} \|_{H^2}^2.
  \end{equation}
  Regarding the $L^\infty$ bound on $X$ near the poles, we restart
  from \eqref{eq_X2} to deduce
  \begin{equation*}
    \begin{aligned}
      (\partial_t - \nu \Delta) \frac{|X|^2}{2}
      & \le  \frac{\nu t}{|\ln \nu|} |X|^2 +   C |\ln \nu| \nu  \frac{|\beta|^4}{|\alpha|^2 t^4} t^3  [1- p \cdot e]^3 |\psi|^2 +  C  |\ln \nu|  \frac{|\beta|^2}{t^2} \nu t [1- p \cdot e] |\psi|^2 \\
      & \le    \frac{\nu t}{|\ln \nu|} |X|^2 + C' |\ln \nu|  \frac{|\beta|^2}{t^2}  \Big(\nu  t^3 [1-p\cdot e]^3   +   \nu t [1-p\cdot e]\Big)  \|\psi^{in}\|_{L^\infty}^2 ,
    \end{aligned}
  \end{equation*}
  where we used that $|\beta|^2/(|\alpha|^2 t^2) \lesssim 1$ for the second inequality. Mimicking the end of the proof of \cref{thm:x-infty}, we find that for all $t_* \le c \nu^{-1/2} |\ln \nu|$,
  \begin{equation} \label{X_infty_long}
    \begin{aligned}
      & \sup_{t \in [0,t_*]} |X(t,p)|^2  \le \|\nabla \psi^{in}\|_{L^\infty}  \\
      & + C  |\ln \nu|
        \Big( \sup_{[0,t_*]}  \frac{|\beta|^2}{t^2} \Big)
        \left([1-p\cdot e]^3 \nu t_*^4  + [1-p\cdot e]^2 \nu^2 t_*^5
        +  [1-p\cdot e] (\nu^3 t_*^6+\nu t_*^2)
        +  1 \right) \|\psi^{in}\|_{L^\infty}^2.
    \end{aligned}
  \end{equation}
  Taking into account that
  \begin{equation*}
    \frac{|\alpha|^2}{|\beta|^2} \sim \nu, \quad
    t^2  \lesssim \nu |\ln \nu|^2  \quad
    \text{ for } \: t \in [\nu^{-1/2}, c \nu^{-1/2} |\ln \nu|]
  \end{equation*}
  and using \eqref{X_long}-\eqref{JX_long}-\eqref{X_infty_long}, we
  can adapt the proof of \cref{sub:mixing} to obtain the second part
  of \cref{thm:vector-field-decay}.
\end{proof}

\section{Proof of mixing and enhanced diffusion for the diffusive Saintillan-Shelley model}

The good decay estimates for the semigroup \(\e^{tL_1}\) of the
previous subsection now allow us to conclude the results by studying
the Volterra equation \eqref{eq:volterra-u}.

\subsection{Proof of \cref{thm2}} \label{sub:conclusion:mixing}
We first prove \cref{thm2}.  Coming back to the Volterra equation
\eqref{eq:volterra-u}, we drop the subscript $k$ and restore the
superscript $\nu$ so that it now reads
\begin{equation} \label{Volterra_u_nu}
  u^\nu(t) + \int_0^t K^\nu(t-s)\, u^\nu(s)\, \dd s = U^\nu(t),
\end{equation}
with
\begin{equation*}
  U^\nu(t) = u[\e^{tL_{1}} \psi^{in}], \quad
  K^\nu(t) v = - u[\e^{tL_1} (\bar L_2 \cdot v)], \quad v \in \CC^3.
\end{equation*}
We remind that
\begin{equation*}
  L_1
  = - i (p \cdot k) + \nu \Delta, \qquad
  \overline{L}_2
  =  \frac{3 i \Gamma}{4\pi} (p \cdot k) \mathbb{P}_{k^\perp} p
\end{equation*}
and
\begin{equation*}
  \begin{aligned}
    u[\psi]
    :=   i  \mathbb{P}_{k^\perp}  \sigma k, \quad
    \mathbb{P}_{k^\perp} = \left(I- k \otimes k \right), \quad
    \sigma  := \eps \int_{\bbS^2} \psi(p)\, p\otimes p\, \dd p.
  \end{aligned}
\end{equation*}
Thanks to the identity
$$ (p \cdot k)   \mathbb{P}_{k^\perp} p = -  \mathbb{P}_{k^\perp}  \nabla (p \cdot k), $$
we find that
\begin{align*}
  U^\nu(t)
  & = \int_{\bbS^2} (\e^{t L_1} \psi^{in}) F \, \nabla(p \cdot k)\, \dd p,  \\
  K^\nu(t) v
  & =   \int_{\bbS^2}  (\e^{t L_1} \bar L_2 \cdot v)  F \, \nabla(p \cdot k)\, \dd p,
    \quad  F := - i \eps   \mathbb{P}_{k^\perp},
\end{align*}
Then, \cref{thm:vector-field-decay} shows that there exists a constant
$C$ independent of $\nu$ such that
\begin{equation} \label{estimates_source_kernel}
  |U^\nu(t)| \le \frac{C \ln(2+t)}{(1+t)^2}
  \|\psi^{in}\|_{H^{2+\delta}}, \qquad
  |K^\nu(t)| \le \frac{C \ln(2+t)}{(1+t)^2}, \qquad \forall t \le \nu^{-1/2}.
\end{equation}
Introduce $\tilde U^\nu := U^\nu \ind_{t \le \nu^{-1/2}}$ and
$\tilde K^\nu := K^\nu \ind_{t \le \nu^{-1/2}}$ that satisfy
\begin{equation} \label{estimates_source_kernel2}
  |\tilde U^\nu(t)|
  \le \frac{C \ln(2+t)}{(1+t)^2} \|\psi^{in}\|_{H^{2+\delta}}, \qquad
  |\tilde K^\nu(t)| \le \frac{C \ln(2+t)}{(1+t)^2}, \qquad \forall t\geq 0,
\end{equation}
and consider the corresponding modified Volterra equation
\begin{equation} \label{modified_Volterra}
  \tilde u^\nu(t)
  +  \int_0^t \tilde K^\nu(t-s)\, \tilde u^\nu(s)\, \dd s
  = \tilde U^\nu(t).
\end{equation}
Let $K^{inv}$ be the inviscid kernel considered in \cref{sec:inviscid},
that is the kernel for $\nu=0$. We have seen that $K^{inv}$ decays
like $t^{-2}$. We now claim that
\begin{equation} \label{conv_kernels}
 \lim_{\nu \rightarrow 0} \|\tilde K^\nu - K^{inv}\|_{L^1(\RR_+)}  = 0.
\end{equation}
Indeed, let $\kappa > 0$ be arbitrarily small.  We fix some large
$T > 0$ independent of $\nu$, so that
$$ \int_{T}^{+\infty} |K^{inv}| + \int_T^{+\infty} |\tilde K^\nu| \le \kappa. $$
Note that the second condition can be achieved because the constant
$C$ in \eqref{estimates_source_kernel2} is independent of $\nu$. Now,
for all $\nu$ such that $\nu^{-1/2} > T$, we write
\begin{align*}
  \|\tilde K^\nu - K^{inv}\|_{L^1(\RR_+)}
  & \le \|\tilde K^\nu - K^{inv}\|_{L^1(0,T)}
    + \int_{T}^{+\infty} \left(|\tilde K^\nu|+|K^{inv}|\right) \\
  & \le  \|K^\nu - K^{inv}\|_{L^1(0,T)} +   \kappa.
\end{align*}
Eventually, it is standard to show that, given any initial data in $L^2(\bbS^2)$,  and any finite time interval
$(0,T)$, the solution $\psi^\nu$ of the advection-diffusion equation
$$\partial_t \psi^\nu + i (p \cdot k)   \psi^\nu - \nu \Delta \psi^\nu = 0$$
converges in $L^\infty(0,T, L^2(\bbS^2))$ to the solution $\psi^{inv}$
of $\partial_t \psi^{inv} + i (p \cdot k) \psi^{inv} = 0$ (with the
same initial data). With the choice of initial data
$\psi_{in} =\bar L_2 \cdot v$, it follows that
$\lim_{\nu \rightarrow 0}\|K^\nu - K^{inv}\|_{L^1(0,T)} = 0$, and the
claim \eqref{conv_kernels} follows.

From there, we can conclude by the stability of solutions of Volterra
equations for $\nu$ small enough equation \eqref{modified_Volterra}
has a solution, that decays like $O(\ln(2+t)/t^2)$. More precisely,
one can construct perturbatively the resolvent $\tilde R^\nu$,
satisfying
\begin{equation*}
  \tilde R^\nu + \tilde K^\nu \star \tilde R^\nu = \tilde K^\nu,
\end{equation*}
see \cref{thm3.9_Volterra}, which even applies to non-convolution
kernels. In particular, the stability condition
$\det (I + \cL \tilde K^\nu(z)) \neq 0$ in $\{\Re z \ge 0\}$ is
satisfied, and $\|\tilde R^\nu\|_{L^1} \le C$. Using the estimates in
\eqref{estimates_source_kernel2} and \cref{thm_decay_Volterra} (see
also \cref{rem:FaouRousset}), we deduce that
\begin{equation*}
  | \tilde u^\nu(t)|
  \le \frac{C \ln(2+t)}{(1+t)^2} \|\psi^{in}\|_{H^{2+\delta}},
  \quad t \ge 0.
\end{equation*}
But, as $\tilde K^\nu = K^\nu$ on $[0, \nu^{-1/2})$, by uniqueness of
the solutions of the Volterra equation on a finite time interval, one
has $\tilde u^\nu = u^\nu$ on $[0, \nu^{-1/2})$, and the first
inequality in \cref{thm2} follows. From there, the second one can be
obtained exactly as in \cref{subsec:control_psi}, by relating $\psi$
to $u$ through Duhamel's formula. This concludes the proof of the
theorem.

\subsection{Proof of \cref{thm3}} \label{sub:conclusion:enhanced}

Here we use the additional  decay estimates from the enhanced
dissipation of \cref{prop:hypocoercf} and the extended time-frame of
\cref{thm:vector-field-decay}.  Using the second inequality in
\cref{thm:vector-field-decay}, we have
\begin{equation} \label{ineqK1}
  |K^\nu(t)| \lesssim  \nu |\ln \nu|^M, \quad
  |U^\nu(t)|   \lesssim  \nu |\ln \nu|^M
  \|\psi^{in}\|_{H^{2+\delta}}, \quad
  \forall t  \in [\nu^{-1/2}, c \nu^{-1/2} |\ln \nu |].
\end{equation}
Moreover, by \cref{prop:hypocoercf}, we also have for $\nu$ small
enough
\begin{equation} \label{ineqK2}
  |K^\nu(t)| \lesssim \e^{-\eps_0 \nu^{1/2} t}, \quad
  |U^\nu(t)| \lesssim  \e^{-\eps_0 \nu^{1/2} t} \|\psi^{in}\|_{H^{2+\delta}}.
\end{equation}
We first prove that $\|K^\nu - K\|_{L^1(\RR_+)} \rightarrow 0$ as
$\nu \rightarrow 0$. We fix $c$ such that $c \eps_0 > \frac{1}{2}$.
We decompose
\begin{align*}
  \|K^\nu - K\|_{L^1} \le \int_0^T |K^\nu - K|    +\int_{T}^{\infty} |K|  + \int_T^{\nu^{-1/2}} |K^\nu|   +  \int_{\nu^{-1/2}}^{c \nu^{-1/2} |\ln \nu|} |K^\nu| + \int_{c \nu^{-1/2} |\ln \nu |}^{\infty} |K^\nu|.
\end{align*}
The first three terms can be treated as in
\cref{sub:conclusion:mixing}: for any $\kappa > 0$ there exists $T$ large
enough and $\nu$ small enough so that
\begin{equation*}
  \int_0^T |K^\nu - K|
  + \int_{T}^{\infty} |K|  + \int_T^{\nu^{-1/2}} |K^\nu|
  \le  \kappa.
\end{equation*}
For the fourth term, we use \eqref{ineqK1}, that yields
\begin{equation*}
  \int_{\nu^{-1/2}}^{c \nu^{-1/2} |\ln \nu|} |K^\nu|
  \lesssim  \nu^{1/2} |\ln \nu|^{M+1} \le \kappa,
\end{equation*}
for $\nu$ small enough.  Finally,
\begin{equation*}
  \int_{c \nu^{-1/2} |\ln \nu |}^{\infty} |K^\nu|
  \lesssim
  \int_{c \nu^{-1/2} |\ln \nu |}^{\infty} \e^{-\eps_0 \nu^{1/2} t}
  \lesssim \nu^{c\eps_0 - \frac{1}{2}}  \le \kappa,
\end{equation*}
for $\nu$ small enough. Hence,
$\|K^\nu - K\|_{L^1(\RR_+)} \rightarrow 0$ as $\nu \rightarrow
0$. Arguing as in \cref{sub:conclusion:mixing}, we deduce that the
spectral condition $\det (I + \cL K^\nu(z)) \neq 0$ in
$\{\Re z \ge 0\}$ is satisfied, and that the resolvent $R^\nu$
associated to $K^\nu$ satisfies $\|R^\nu\|_{L^1} \le C$.

We now would like to establish the bound
\begin{equation} \label{ineq_unu1}
  |u^\nu(t)| \lesssim \e^{- \eta_0 \nu^{1/2}t} \|\psi^{in}\|_{H^{2+\delta}}, \qquad \eta_0 := \frac{\eps_0}{4}.
\end{equation}
We come back to the proof of \cref{thm_decay_Volterra}, replacing the
weight $(1+\eps t)^\alpha$ by $\e^{-\eta_0 \nu^{1/2}t}$. The
non-convolution kernel
$k(t,s) = \frac{(1+\eps t^\alpha)}{(1+\eps s^\alpha)} K(t-s) 1_{t-s >
  0}$ is now replaced by
\begin{align*}
  k^\nu(t,s) \:
  & :=  \:  \e^{\eta_0 \nu^{1/2} (t-s)} K^\nu(t-s) 1_{t-s > 0} \\
  &  =   K_\nu(t-s) 1_{t-s > 0}  +  \big( \e^{\eta_0 \nu^{1/2} (t-s)}
    -1 \big) K^\nu(t-s) 1_{t-s > 0}
    =: k^\nu_1(t,s) + k^\nu_2(t,s).
\end{align*}
It is easily seen that $k^\nu$ is a Volterra kernel of type
$L^\infty$. Moreover, by the previous considerations, for $\nu$ small
enough, $k_1^\nu$ has the resolvent
$r_1(t,s) := R^\nu(t-s) 1_{t-s > 0}$, which satisfies
$\|r^\nu_1\|_{\infty,\RR_+} \le \|R^\nu\|_{L^1(\RR_+)} \le C$ for a
constant $C$ independent of $\nu$. By using \cref{thm3.9_Volterra}, it
is enough to show that $\|k_2^\nu\|_{\infty,\RR_+}$ can be made
arbitrarily small for $\nu$ small enough. We fix
$c= \frac{4}{3\eps_0} = \frac{1}{3\eta_0}$, and decompose for some
$\delta > 0$ to be fixed
\begin{align*}
  \|k_2^\nu\|_{\infty,\RR_+}
  & \le \int_{0}^{\infty}  \Big|  \big( \e^{\eta_0 \nu^{1/2} s} -1 \big) K_1^\nu(s)  \Big|  \dd s \\
  & \le  \Big( \int_{0}^{\delta \nu^{-1/2}}
    + \int_{\delta \nu^{-1/2}}^{c |\ln \nu | \nu^{-1/2}}
    + \int_{c |\ln \nu | \nu^{-1/2}}^{\infty}  \Big)
    \Big|  \big( \e^{\eta_0 \nu^{1/2} s} -1 \big) K_1^\nu(s)
    \Big|\, \dd s
    =:  I_1 + I_2 + I_3.
\end{align*}
Let $\kappa > 0$. Using the uniform bound $K^\nu = O(\ln(2+t) t^{-2})$
on $[0,\nu^{-1/2}]$, we find
$$ I_1 \lesssim (\e^{\eta_0 \delta} - 1) \le \kappa \quad \text{for $\delta$ small enough}. $$
This $\delta$ being fixed, using the bound
$|K^\nu| \lesssim \nu |\ln \nu|^M$ on
$[\delta \nu^{-1/2}, c \nu^{-1/2} |\ln \nu | ]$, which follows from
\eqref{estimates_source_kernel} and \eqref{ineqK1}, we find
\begin{equation*}
  I_2 \lesssim \nu^{-\eta_0 c}  \nu^{1/2} |\ln \nu|^{M+1}
  \lesssim \nu^{1/2 - 1/3} \le \kappa,
\end{equation*}
for $\nu$ small enough. Eventually, using \eqref{ineqK2}, we get
\begin{equation*}
  I_3 \lesssim \nu^{(\eps_0 - \eta_0) c - 1/2} \lesssim \nu^{1/2} \le \kappa,
\end{equation*}
for $\nu$ small enough. Hence, for $\nu$ small enough, $k^\nu$ has a
resolvent $r^\nu$ that satisfies $\|r^\nu\|_{\infty,\RR_+} \le
C$. Also, by the bound \eqref{ineqK2}, we know that
$\tilde U^\nu = \e^{\eta_0 \nu^{1/2}t} U^\nu$ is bounded by
$C \|\psi^{in}\|_{H^{2+\delta}}$, and finally
\begin{equation*}
  \tilde u^\nu = \tilde U^\nu - \int_{\RR_+} r^\nu(t,s) \tilde
  U^\nu(s) \dd s,
\end{equation*}
is bounded as well, which yields that
$u^\nu = \e^{-\eta_0 \nu^{1/2}t} \tilde u^\nu$ satisfies
\eqref{ineq_unu1}.

For the intermediate time regime, we fix $c$ such that $\eta_0 c =
2$. Combining \eqref{estimates_source_kernel} and \eqref{ineqK1}, we
find that
\begin{equation*}
  |K_\nu(t)| \lesssim \frac{\ln(2+t)^{M+2}}{(1+t)^2},
  \quad \forall t \in [0, c \nu^{-1/2} |\ln \nu|].
\end{equation*}
Arguing exactly as in \cref{sub:conclusion:mixing}, we deduce:
\begin{equation*}
  |u^\nu(t)| \lesssim
  \frac{\ln(2+t)^{M+2}}{(1+t)^2} \|\psi^{in}\|_{H^{2+\delta}},
  \quad \forall t \in [0, c \nu^{-1/2}| \ln \nu|],
\end{equation*}
which in particular gives for small enough $\nu$:
\begin{equation} \label{ineq_unu2}
  |u^\nu(t)| \lesssim  \nu |\ln(\nu)|^{M+2}
  \|\psi^{in}\|_{H^{2+\delta}},
  \quad \forall t \in [\nu^{-1/2}, c \nu^{-1/2}| \ln \nu|],
\end{equation}
Moreover, with our choice of $c$, for $\nu$ small enough and
$t \ge c \nu^{-1/2} |\ln \nu|$,
\begin{equation*}
  \e^{-\eta_0 \nu^{1/2}t}  \lesssim  \nu^2 \ll   \nu |\ln\nu|^{M+2}
\end{equation*}
so that \eqref{ineq_unu1} and \eqref{ineq_unu2} imply the first
inequality in \cref{thm3} (replacing $M$ by $M+2$).

To obtain the control of $\psi = \psi^\nu$, we use the expression
\begin{equation*}
  \psi^\nu(t)
  = \e^{t L_1} \psi^{in}
  +  \int_0^t  \left( \e^{(t-s)L_1} \bar L_2\right) \cdot u^\nu(s) \, \dd s.
\end{equation*}
By \cref{prop:hypocoercf} and \eqref{ineq_unu1}, we get
\begin{equation*}
\begin{aligned}
  \|\psi^\nu(t)\|
 & \lesssim \e^{-\eps_0 \nu^{1/2}t} \|\psi_{in}\|
  + \int_0^t \e^{-\eps_0 \nu^{1/2}(t-s)} \e^{-\eta_0 \nu^{1/2} s}\,
  \dd s  \|\psi_{in}\|_{H^{2+\delta}}  \\
&  \lesssim \e^{-\eps_0 \nu^{1/2}t}  \|\psi_{in}\|
  + \e^{-\min(\eps_0,\eta_0) \nu^{1/2} t}
  \int_0^t \dd s \|\psi_{in}\|_{H^{2+\delta}} ,
  \end{aligned}
\end{equation*}
which implies the last bound in \cref{thm3}.

\begin{remark}[Stability of pullers]
  The spectral stability condition
  \begin{equation*}
    \det (I + \cL \tilde K^\nu(z)) \neq 0, \qquad \text{ in}\quad \{\Re z \ge 0\}
  \end{equation*}
  was shown above to be satisfied for small $\nu$, through
  perturbation of the inviscid condition. Actually, in the case of
  pullers, there is a more straightforward way, valid for all $\nu$
  and directly for the original kernel $K^\nu$. Indeed, one can simply
  notice that
  \begin{equation*}
    \mathcal{L} K^\nu(z) v \cdot \conj{v}
    =  \frac{3\Gamma}{4\pi}\int (z-L_1)^{-1} \phi\, \conj{\phi},
  \end{equation*}
  where \(\phi = (p\cdot k) \mathbb{P}_{k^\perp} p \cdot v\). In terms
  of \(F = (z-L_1)^{-1} \phi\) we thus find
  \begin{equation*}
    \mathcal{L} K^\nu(z)  v \cdot \conj{v}
    =  \frac{3\Gamma}{4\pi}\ \l (z-L_1) F, F \r
    = \frac{3\Gamma}{4\pi}\
    \Big( \Re z \| F \|^2 + \nu \| \nabla F \|^2 \Big)
    \ge 0 \quad \text{for } \: \Re z \ge 0.
  \end{equation*}
  This implies in particular that $\det(I + \cL K^\nu)$ cannot vanish
  in the unstable half plane.
\end{remark}

\section*{Acknowledgements}
The authors would like to thank the Isaac Newton Institute for
Mathematical Sciences, Cambridge, for support and hospitality during
the programmes \emph{Mathematical aspects of turbulence} and
\emph{Frontiers in kinetic theory} where part of the work on this
paper was undertaken. DGV is also grateful to the Mathematics
Department of Imperial College for its hospitality and its support
through an ICL-CNRS fellowship, which made possible the collaboration.
This work was supported by EPSRC grant no EP/R014604/1 and a grant
from the Simons Foundations.  MCZ acknowledges funding from the Royal
Society through a University Research Fellowship (URF\textbackslash
R1\textbackslash 191492). HD and DGV acknowledge the grant ANR-18-
CE40-0027 of the French National Research Agency (ANR).

\appendix
\section{Reminders on covariant derivatives}
\label{app:covariant}

For the computations on the sphere, it is natural to rely on covariant
derivatives from Riemannian geometry. Classical references are
\cite{Petersen16,Lee18}, we just provide here a quick reminder.  We
start by introducing the natural metric \(g\) on the sphere induced by
the Euclidean scalar product on $\RR^3$. In spherical coordinates
\((\theta,\varphi)\) it is explicitly given by
\begin{equation*}
  g_{ij} =
  \begin{pmatrix}
    1 & 0 \\
    0 & \sin^2 \theta
  \end{pmatrix}
  \quad
  \Rightarrow
  \quad
  g^{ij} =
  \begin{pmatrix}
    1 & 0 \\
    0 & \sin^{-2} \theta
  \end{pmatrix}.
\end{equation*}
Given this metric and the associated Levi-Civita connection, we
introduce the covariant derivative of a tensor, denoted by
\(\nabla\). We remind that the covariant derivative of a vector $X$,
resp. of a covector \(\omega\), is the $(1,1)$-tensor, resp. the
$(0,2)$ tensor, defined in a coordinate basis by
\begin{equation*}
  \nabla_i X^j
  = \partial_i X^{j} + \Gamma^j_{ik} X^k,
\end{equation*}
resp.
\begin{equation*}
  \nabla_i \omega_j
  = \partial_i \omega_{j} - \Gamma^k_{ij} \omega_k,
\end{equation*}
containing the Christoffel symbols \(\Gamma\) defined as
\begin{equation*}
  \Gamma^k_{ij} = \frac 12 g^{kl}
  (\partial_j g_{il} + \partial_i g_{jl} - \partial_l g_{ij}).
\end{equation*}
For general tensors, the covariant derivative is defined recursively
through the formula
$$ \nabla (T \otimes T') = \nabla T \otimes T' + T \otimes \nabla T' $$
The basic properties of the covariant derivatives are that they
commute with the metric $g$ and that they satisfy the Leibniz rule.

Thanks to this covariant derivative, we then define the connection
Laplacian of a tensor as
$$\Delta = \tr(\nabla^2) = g^{ij} \nabla_i \nabla_j.$$

Higher-order derivatives do not commute due to the curvature. The
Ricci identity captures the defect by the Riemann curvature tensor
\(R\). In a coordinate basis, it takes for a covector \(\omega\) the
form
\begin{equation*}
  [\nabla_a,\nabla_b] \omega_c
  = - {R^{d}}_{cab} \omega_d.
\end{equation*}

By the definition of the connection Laplacian, we thus find for a
scalar function \(f\) that
\begin{equation*}
  [\Delta,\nabla] f = \ricci \nabla f
\end{equation*}
where $\ricci$ denotes the Ricci curvature, obtained by contracting
the first and third argument in the Riemann curvature tensor. On a
unit-sphere the Ricci curvature just equals the metric and thus we
find
\begin{equation*}
  [\Delta,\nabla] f = \nabla f
\end{equation*}
which yields a good sign for our estimates in
\cref{sec:diffusive}. The good sign simplifies the algebra even though
the estimates would equally work for a bounded curvature.

Acting on a general tensor \(X\), we find a similar expression with a
successive application of the Riemann curvature tensor. As the Riemann
curvature tensor is bounded on a sphere, we just note that
\begin{equation*}
  [\Delta,\nabla] X = \mathcal{R} \nabla X
\end{equation*}
for a bounded tensor \(\mathcal{R}\).

In the enhanced dissipation estimates, we use the commutator between
the Laplacian and the tensor $(p \cdot e) X$, for a fixed vector
\(e\). It is provided by
\begin{lemma}
  \label{thm:laplace-mixed-term}
  Let \(X\) be a \((0,N)\)-tensor. Then it holds that
  \begin{equation*}
    \Delta(\nabla(p\cdot e) \otimes X)
    = - \nabla(p\cdot e) \otimes X
    - 2 (p\cdot e) \nabla X
    + \nabla(p\cdot e) \otimes \Delta X.
  \end{equation*}
\end{lemma}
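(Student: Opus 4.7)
The key observation is that the function $f(p) := p \cdot e$, restricted to $\bbS^2$, is a first-order spherical harmonic whose Hessian has the especially simple form
\begin{equation*}
  \nabla_a \nabla_b f = -f\, g_{ab}.
\end{equation*}
I would verify this either by embedded computation (the Euclidean Hessian of $p \cdot e$ vanishes, and the Gauss formula $D_X Y = \nabla_X Y - \langle X,Y\rangle p$ for the unit sphere contributes exactly the $-f g$ term) or directly in spherical coordinates using the Christoffel symbols. Tracing this identity with $g^{ab}$ gives $\Delta f = -2f$. Then applying the Ricci identity (or just differentiating the Hessian formula again and using metric compatibility $\nabla g = 0$) yields
\begin{equation*}
  \Delta(\nabla_c f) = g^{ab}\nabla_a \nabla_b \nabla_c f = g^{ab}\nabla_a(-f\, g_{bc}) = -\nabla_c f,
\end{equation*}
which is the familiar statement that first-order spherical harmonics are eigenfunctions of the Hodge (equivalently connection) Laplacian on 1-forms with eigenvalue $-1$.

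With these preliminaries, the proof reduces to a Leibniz computation. Denote the indices of $X$ collectively by $I$, write $(\nabla f \otimes X)_{cI} = (\nabla_c f) X_I$, and expand
\begin{equation*}
  \nabla_a \nabla_b\bigl[(\nabla_c f) X_I\bigr]
  = (\nabla_a \nabla_b \nabla_c f)\, X_I
  + (\nabla_b \nabla_c f)(\nabla_a X_I)
  + (\nabla_a \nabla_c f)(\nabla_b X_I)
  + (\nabla_c f)(\nabla_a \nabla_b X_I).
\end{equation*}
Contracting with $g^{ab}$, the first term yields $\Delta(\nabla f) \otimes X = -\nabla f \otimes X$ by the preceding paragraph, and the fourth term is precisely $\nabla f \otimes \Delta X$. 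For the two mixed terms, substituting the Hessian identity $\nabla_a \nabla_c f = -f\, g_{ac}$ and contracting with $g^{ab}$ collapses each of them to $-f\, \nabla_c X_I = -(p\cdot e)(\nabla X)_{cI}$, so together they produce $-2(p\cdot e)\, \nabla X$. Summing the four contributions gives the claimed identity.

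There is no real obstacle here: the only conceptual content is the Hessian formula $\nabla^2 f = -f\, g$, which encodes the degree-one spherical harmonic structure of $p \cdot e$. The remaining work is a mechanical application of the Leibniz rule, with the only bookkeeping subtlety being that $\otimes$ is not commutative, so one must keep the indices of $\nabla f$ and of $X$ in the correct order throughout.
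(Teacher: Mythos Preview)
Your proof is correct and follows essentially the same route as the paper: a Leibniz expansion of $\Delta(\nabla f \otimes X)$ into the three blocks $(\Delta\nabla f)\otimes X$, the mixed Hessian term, and $\nabla f\otimes\Delta X$, followed by identifying the Hessian of $f=p\cdot e$. The only difference is packaging: you isolate the clean coordinate-free identity $\nabla^2(p\cdot e)=-(p\cdot e)\,g$ up front and read everything off from it, whereas the paper computes the same Hessian componentwise in spherical coordinates via the Christoffel symbols and obtains $\Delta(\nabla f)=-\nabla f$ from the commutator $[\Delta,\nabla]=\ricci\,\nabla$ together with $\Delta\cos\theta=-2\cos\theta$; both computations collapse the mixed term to $-(p\cdot e)\nabla X$ and yield the stated formula.
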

\begin{proof}
  We compute the expression in the spherical coordinates
  \((\theta,\varphi)\) where we take \(e\) to be the north pole. Using
  the Leibniz rule, we find that
  \begin{equation*}
    \Delta(\nabla_{i_0}(p\cdot e) X_{i_1,\dots,i_n})
    = (\Delta \nabla_{i_0}(p\cdot e)) X_{i_1,\dots,i_n})
    + 2 g^{mn} (\nabla_m \nabla_{i_0} (p\cdot e))
    \nabla_n X_{i_1,\dots,i_n}
    + \nabla_{i_0}(p\cdot e) \Delta X_{i_1,\dots,i_n}.
  \end{equation*}
  We can explicitly compute that
  \begin{equation*}
    \Delta (\nabla(p\cdot e))
    = \nabla    \Delta (p\cdot e) + \ricci  \nabla (p \cdot e)
    =  \nabla  \Delta  \cos \theta + \nabla \cos \theta
    = \sin \theta\, \dd\theta
    = - \nabla (p \cdot e).
  \end{equation*}
  For the mixed term, we find
  \begin{equation*}
    \nabla_m \nabla_{i_0} \cos \theta
    = \partial_m \partial_{i_0} \cos \theta
    - \Gamma^k_{mi_0} \partial_k \cos \theta.
  \end{equation*}
  In the spherical coordinates, the only non-zero Christoffel symbols
  are
  \begin{equation*}
    \Gamma^{\theta}_{\varphi\varphi} = - \cos\theta \sin\theta
    \quad \text{ and } \quad
   \Gamma^{\varphi}_{\varphi\theta} =
    \Gamma^{\varphi}_{\theta\varphi} = \frac{\cos \theta}{\sin\theta}.
  \end{equation*}
  This then yields that
  \begin{equation*}
    g^{mn} (\nabla_m \nabla_{i_0} (p\cdot e))
    \nabla_n X_{i_1,\dots,i_n}
    = (p\cdot e) \nabla_{i_0}X_{i_1,\dots,i_n}.
  \end{equation*}
  Hence we have arrived at the claimed expression.
\end{proof}

\subsection*{Conflict of interest}
The authors declare that they have no conflict of interest.
\bibliographystyle{abbrv}
\bibliography{lit}

\begin{thebibliography}{10}

\bibitem{albritton-ohm-2022-preprint}
D.~{Albritton} and L.~{Ohm}.
\newblock {On the stabilizing effect of swimming in an active suspension}.
\newblock {\em arXiv e-prints}, May 2022.

\bibitem{Bedrossian17}
J.~Bedrossian.
\newblock Suppression of plasma echoes and {L}andau damping in {S}obolev spaces
  by weak collisions in a {V}lasov-{F}okker-{P}lanck equation.
\newblock {\em Ann. PDE}, 3(2):Paper No. 19, 66, 2017.

\bibitem{BCZ17}
J.~Bedrossian and M.~Coti~Zelati.
\newblock Enhanced dissipation, hypoellipticity, and anomalous small noise
  inviscid limits in shear flows.
\newblock {\em Arch. Ration. Mech. Anal.}, 224(3):1161--1204, 2017.

\bibitem{benedetto-caglioti-montemagno-2016-exponential-dephasing}
D.~Benedetto, E.~Caglioti, and U.~Montemagno.
\newblock Exponential dephasing of oscillators in the kinetic {K}uramoto model.
\newblock {\em J. Stat. Phys.}, 162(4):813--823, 2016.

\bibitem{CLN21}
S.~Chaturvedi, J.~Luk, and T.~Nguyen.
\newblock The {V}lasov-{P}oisson-{L}andau system in the weakly collisional
  regime.
\newblock {\em J. Amer. Math. Soc.}, 36(4):1103--1189, 2023.

\bibitem{ChenLiu2013}
X.~Chen and J.-G. Liu.
\newblock Global weak entropy solution to {D}oi-{S}aintillan-{S}helley model
  for active and passive rod-like and ellipsoidal particle suspensions.
\newblock {\em J. Differential Equations}, 254(7):2764--2802, 2013.

\bibitem{chiba-2015-proof-kuramoto-conjecture}
H.~Chiba.
\newblock A proof of the {K}uramoto conjecture for a bifurcation structure of
  the infinite-dimensional {K}uramoto model.
\newblock {\em Ergodic Theory Dynam. Systems}, 35(3):762--834, 2015.

\bibitem{Choi}
J.~Choi.
\newblock The method of stationary phase.
\newblock Available at
  http://www.math.uchicago.edu/~may/VIGRE/VIGREREU2011.html, 2011.

\bibitem{Constantin2005}
P.~Constantin.
\newblock Nonlinear {F}okker-{P}lanck {N}avier-{S}tokes systems.
\newblock {\em Commun. Math. Sci.}, 3(4):531--544, 2005.

\bibitem{CKRZ08}
P.~Constantin, A.~Kiselev, L.~Ryzhik, and A.~Zlato\v{s}.
\newblock Diffusion and mixing in fluid flow.
\newblock {\em Ann. of Math. (2)}, 168(2):643--674, 2008.

\bibitem{CZ20}
M.~Coti~Zelati.
\newblock Stable mixing estimates in the infinite {P}\'{e}clet number limit.
\newblock {\em J. Funct. Anal.}, 279(4):108562, 25, 2020.

\bibitem{CZDE20}
M.~Coti~Zelati, M.~G. Delgadino, and T.~M. Elgindi.
\newblock On the relation between enhanced dissipation timescales and mixing
  rates.
\newblock {\em Comm. Pure Appl. Math.}, 73(6):1205--1244, 2020.

\bibitem{CZG21}
M.~{Coti Zelati} and T.~{Gallay}.
\newblock {Enhanced dissipation and Taylor dispersion in higher-dimensional
  parallel shear flows}.
\newblock {\em arXiv e-prints}, Aug. 2021.

\bibitem{DelZotto21}
A.~{Del Zotto}.
\newblock {Enhanced Dissipation and Transition Threshold for the Poiseuille
  Flow in a Periodic Strip}.
\newblock {\em arXiv e-prints}, Aug. 2021.

\bibitem{dietert-2016-stability-kuramoto}
H.~Dietert.
\newblock Stability and bifurcation for the {Kuramoto} model.
\newblock {\em J. Math. Pures Appl. (9)}, 105(4):451--489, 2016.

\bibitem{dietert-fernandez-2018-kuramoto}
H.~Dietert and B.~Fernandez.
\newblock The mathematics of asymptotic stability in the {Kuramoto} model.
\newblock {\em Proc. R. Soc. Lond., A, Math. Phys. Eng. Sci.}, 474(2220):20,
  2018.
\newblock Id/No 20180467.

\bibitem{dietert-fernandez-gerard-varet-2018-landau-kuramoto}
H.~Dietert, B.~Fernandez, and D.~G{\'e}rard-Varet.
\newblock Landau damping to partially locked states in the {Kuramoto} model.
\newblock {\em Commun. Pure Appl. Math.}, 71(5):953--993, 2018.

\bibitem{DoiEdwards88}
M.~Doi and S.~Edwards.
\newblock {\em The Theory of Polymer Dynamics}, volume~73 of {\em International
  series of monographs on physics, Oxford science publications}.
\newblock Clarendon Press, 1988.

\bibitem{faou-horsin-rousset-2021-vlasov-hmf}
E.~Faou, R.~Horsin, and F.~Rousset.
\newblock On linear damping around inhomogeneous stationary states of the
  {Vlasov}-{HMF} model.
\newblock {\em J. Dyn. Differ. Equations}, 33(3):1531--1577, 2021.

\bibitem{fernandez-gerard-varet-giacomin-2016-landau-kuramoto}
B.~Fernandez, D.~G{\'e}rard-Varet, and G.~Giacomin.
\newblock Landau damping in the {Kuramoto} model.
\newblock {\em Ann. Henri Poincar{\'e}}, 17(7):1793--1823, 2016.

\bibitem{gripenberg-londen-staffans-1990-volterra}
G.~Gripenberg, S.-O. Londen, and O.~Staffans.
\newblock {\em Volterra integral and functional equations}, volume~34 of {\em
  Encycl. Math. Appl.}
\newblock Cambridge etc.: Cambridge University Press, 1990.

\bibitem{guo-2002-landau}
Y.~Guo.
\newblock The {Landau} equation in a periodic box.
\newblock {\em Commun. Math. Phys.}, 231(3):391--434, 2002.

\bibitem{HoheneggerShelley2010}
C.~Hohenegger and M.~J. Shelley.
\newblock Stability of active suspensions.
\newblock {\em Phys. Rev. E (3)}, 81(4):046311, 10, 2010.

\bibitem{jeffery-1922-ellipsoidal}
G.~B. Jeffery.
\newblock The motion of ellipsoidal particles immersed in a viscous fluid.
\newblock {\em Proc. R. Soc. Lond., Ser. A}, 102:161--179, 1922.

\bibitem{Klainerman85}
S.~Klainerman.
\newblock Uniform decay estimates and the {L}orentz invariance of the classical
  wave equation.
\newblock {\em Comm. Pure Appl. Math.}, 38(3):321--332, 1985.

\bibitem{landau-1946}
L.~Landau.
\newblock On the vibrations of the electronic plasma.
\newblock {\em Acad. Sci. USSR, J. Phys.}, 10:25--34, 1946.

\bibitem{Lee18}
J.~M. Lee.
\newblock {\em Introduction to {R}iemannian manifolds}, volume 176 of {\em
  Graduate Texts in Mathematics}.
\newblock Springer, Cham, 2018.
\newblock Second edition of [ MR1468735].

\bibitem{LZ21}
H.~Li and W.~Zhao.
\newblock Metastability for the dissipative quasi-geostrophic equation and the
  non-local enhancement.
\newblock {\em Comm. Math. Phys.}, 401(2):1383--1415, 2023.

\bibitem{MD18}
C.~J. Miles and C.~R. Doering.
\newblock Diffusion-limited mixing by incompressible flows.
\newblock {\em Nonlinearity}, 31(5):2346--2350, 2018.

\bibitem{mouhot-villani-2011-landau}
C.~Mouhot and C.~Villani.
\newblock On {Landau} damping.
\newblock {\em Acta Math.}, 207(1):29--201, 2011.

\bibitem{Ohm_Shelley_bifurcation}
L.~Ohm and M.~J. Shelley.
\newblock Weakly nonlinear analysis of pattern formation in active suspensions.
\newblock {\em J. Fluid Mech.}, 942:Paper No. A53, 41, 2022.

\bibitem{OttoTzavaras2008}
F.~Otto and A.~E. Tzavaras.
\newblock Continuity of velocity gradients in suspensions of rod-like
  molecules.
\newblock {\em Comm. Math. Phys.}, 277(3):729--758, 2008.

\bibitem{penrose-1960-electrostatic-maxwellian}
O.~Penrose.
\newblock Electrostatic instabilities of a uniform non-{Maxwellian} plasma.
\newblock {\em Phys. Fluids}, 3:258--265, 1960.

\bibitem{Petersen16}
P.~Petersen.
\newblock {\em Riemannian geometry}, volume 171 of {\em Graduate Texts in
  Mathematics}.
\newblock Springer, Cham, third edition, 2016.

\bibitem{saintillan-2018-rheology}
D.~Saintillan.
\newblock Rheology of active fluids.
\newblock In {\em Annual review of fluid mechanics. Vol. 50}, pages 563--592.
  Palo Alto, CA: Annual Reviews, 2018.

\bibitem{saintillan-shelley-2008-instabilities}
D.~Saintillan and M.~J. Shelley.
\newblock Instabilities, pattern formation, and mixing in active suspensions.
\newblock {\em Phys. Fluids}, 20(12):16, 2008.
\newblock Id/No 123304.

\bibitem{strogatz-mirollo-matthews-1992-coupled}
S.~H. Strogatz, R.~E. Mirollo, and P.~C. Matthews.
\newblock Coupled nonlinear oscillators below the synchronization threshold:
  {Relaxation} by generalized {Landau} damping.
\newblock {\em Phys. Rev. Lett.}, 68(18):2730--2733, 1992.

\bibitem{taylor-1923}
G.~I. Taylor.
\newblock The motion of ellipsoidal particles in a viscous fluid.
\newblock {\em Proc. R. Soc. Lond., Ser. A}, 103:58--61, 1923.

\bibitem{villani-2009-hypocoercivity}
C.~Villani.
\newblock {\em Hypocoercivity}, volume 950 of {\em Mem. Am. Math. Soc.}
\newblock Providence, RI: American Mathematical Society (AMS), 2009.

\bibitem{WZ19}
D.~Wei and Z.~Zhang.
\newblock Enhanced dissipation for the {K}olmogorov flow via the hypocoercivity
  method.
\newblock {\em Sci. China Math.}, 62(6):1219--1232, 2019.

\end{thebibliography}
\end{document}